\numberwithin{equation}{section}
\newtheorem{theorem}{Theorem}
\newtheorem{lemma}[theorem]{Lemma}
\newtheorem{corollary}[theorem]{Corollary}
\newtheorem{proposition}[theorem]{Proposition}
\newtheorem{remark}[theorem]{Remark}
\newtheorem{definition}[theorem]{Definition}
\newtheorem{lemmaletter}{Lemma}
\newcommand{\innerthmname}{}% initialize
\theoremstyle{definition}
\def\namedlabel#1#2{\begingroup
	#2%
	\def\@currentlabel{#2}%
	\phantomsection\label{#1}\endgroup
}
\newcommand{\ud}{\hspace{0.05cm}\mathrm{d}}
\DeclareMathOperator{\Ric}{Ric}
\newcommand{\R}{\mathbb{R}}
\newcommand{\Ss}{\mathbb{S}}
\newcommand{\Div} {\operatorname{div}}
\newcommand{\eps}{\varepsilon}
\title[Quantitative stability near minimizing $Q$-curvature metrics]{Quantitative stability of the total 
$Q$-curvature near minimizing metrics}  
\author[J.H. Andrade]{Jo\~{a}o Henrique\ Andrade}
\author[T. K\"{o}nig]{Tobias K\"{o}nig}
\author[J. Ratzkin]{Jesse Ratzkin}
\author[J. Wei]{Juncheng Wei}
\address[J.H. Andrade]{
	Department of Mathematics,
	University of S\~ao Paulo
	\newline\indent 
	05508-090, S\~ao Paulo-SP, Brazil}
\email{\href{mailto:andradejh@ime.usp.br}{andradejh@ime.usp.br}}
\address[T. K\"{o}nig]{
	Institut für Mathematik, 
	Goethe-Universität Frankfurt, 
	\newline\indent 
	60325 Frankfurt am Main, Germany}
\email{\href{koenig@mathematik.uni-frankfurt.de}{koenig@mathematik.uni-frankfurt.de}}
\address[J. Ratzkin]{Department of Mathematics,
	%\newline\indent 
	Universit\"{a}t W\"{u}rzburg
	\newline\indent
	97070, W\"{u}rzburg-BA, Germany}
\email{\href{mailto:jesse.ratzkin@mathematik.uni-wuerzburg.de}{jesse.ratzkin@mathematik.uni-wuerzburg.de}}
\address[J. Wei]{
	Department of Mathematics,
	University of British Columbia
	\newline\indent 
	V6T 1Z2, Vancouver-BC, Canada
	\newline\indent
	and
	\newline\indent 
	Department of Mathematics,
	The Chinese University of Hong Kong
	\newline\indent 
	Shatin-NT, Hong Kong}
\email{\href{mailto:jcwei@math.ubc.ca}{jcwei@math.ubc.ca}}
\email{\href{mailto:wei@math.cuhk.edu.hk}{wei@math.cuhk.edu.hk}}
\thanks{This research was supported by the National Council for Scientific and Technological Development (CNPq) \#441922/2023-6, \#409764/2023-0 and \#443594/2023-6, Funda\c c\~ao de Amparo \`a Pesquisa do Estado de S\~ao Paulo (FAPESP) \#2021/07566-3 and \#2023/15934-0, and Natural Sciences and Engineering Research Council of Canada (NSERC) \#RGPIN-2018-03773}
\subjclass[2000]{35J60, 35B09, 35J30, 35B40}
\keywords{Paneitz--Branson operator, $Q$-curvature equation, Quantitative estimates, Stability}
\date{July 09, 2024}
\begin{document}
	
	%%%%%%%%%%%%%%%%%%%%%%%%%%%%%%%%%%%%%%%%%%%%%%%%%%%%%%%%%%%%%%%%%%%%%%%%%%%%%%%%%%%%%%%%%%%%%%%%%%
	%  ABSTRACT %%%%%%%%%%%%%%%%%%%%%%%%%%%%%%%%%%%%%%%%%%%%%%%%%%%%%%%%%%%%%%%%%%%%%%%%%%%%%%%%%%%%%%%%%%%%%%%%%%
	\begin{abstract}
		Under appropriate positivity hypotheses, we prove quantitative estimates for the total $k$-th order 
        $Q$-curvature functional near minimizing metrics on any smooth, closed $n$-dimensional Riemannian 
        manifold for every integer $1 \leq k < \frac{n}{2}$.  
        More precisely, we show that on a generic closed Riemannian manifold the distance to the minimizing 
        set of metrics is controlled quadratically by the $Q$-curvature energy deficit, extending recent 
        work by Engelstein, Neumayer and Spolaor \cite{ENS} in the case $k=1$. Next we prove, for any integer $1 \leq k< \frac{n}{2}$, the existence of an 
        $n$-dimensional Riemannian manifold such that the $k$-th order $Q$-curvature deficit controls a higher power of the distance to the minimizing set. We believe that these degenerate examples are of independent interest and can be 
        used for further development in the field.
	\end{abstract}
	
	%%%%%%%%%%%%%%%%%%%%%%%%%%%%%%%%%%%%%%%%%%%%%%%%%%%%%%%%%%%%%%%%%%%%%%%%%%%%%%%%%%%%%%%%%%%%%%%%%%
	%  HEADER %%%%%%%%%%%%%%%%%%%%%%%%%%%%%%%%%%%%%%%%%%%%%%%%%%%%%%%%%%%%%%%%%%%%%%%%%%%%%%%%%%%%%%%%%%%%%%%%%%
	\maketitle
	
	%%%%%%%%%%%%%%%%%%%%%%%%%%%%%%%%%%%%%%%%%%%%%%%%%%%%%%%%%%%%%%%%%%%%%%%%%%%%%%%%%%%%%%%%%%%%%%%%%%
	%  TABLE OF CONTENTS %%%%%%%%%%%%%%%%%%%%%%%%%%%%%%%%%%%%%%%%%%%%%%%%%%%%%%%%%%%%%%%%%%%%%%%%%%%%%%%%%%%%%%%%%%%%%%%%%%
	
	\begin{center}
		%\begin{minipage}{8cm}
		\footnotesize
		\tableofcontents
		%\end{minipage}
	\end{center}
	
	%%%%%%%%%%%%%%%%%%%%%%%%%%%%%%%%%%%%%%%%%%%%%%%%%%%%%%%%%%%%%%%%%%%%%%%%%%%%%%%%%%%%%%%%%%%%%%%%%%	
	\section{Introduction and main results}

\subsection{The $k$-th order $Q$-curvature problem}
    We consider a compact Riemannian manifold $(M,g)$ of dimension $n$ without boundary. 
    In 
    1992, Graham, Jenne, Mason and Sparling \cite{GJMS} constructed a conformally invariant 
    operator $P_{g,k}$ whose leading term is $(-\Delta_g)^k$ for each integer $1 \leq k < \frac{n}{2}$, 
    where $\Delta_g$ is the Laplace--Beltrami operator of $g$ (see Appendix~\ref{GJMSformulas} for more details). 
    
    The operator $P_{g,k}$ is now known as the GJMS operator of order $2k$.
    It is naturally constructed from curvature quantities of $g$ and satisfies the transformation law 
    \begin{equation} \label{conv_inv_law} 
     P_{\widetilde g,k} (\phi) 
    = u^{-\frac{n+2k}{n-2k}} P_{g,k}(\phi u) \quad {\rm when} \quad \widetilde g = u^{\frac{4}{n-2k}} g.
    \end{equation} 
    Subsequently, the authors defined the (scalar-valued) curvature quantity 
    $Q_{g,k} = \frac{2}{n-2k} P_{g,k} (1)$. Substituting 
    $\phi=1$ into \eqref{conv_inv_law}, we see 
    \begin{equation} \label{q_trans_law}  
    Q_{\widetilde g,k} = \frac{2}{n-2k} u^{-\frac{n+2k}{n-2k}} P_{g,k} (u) \quad {\rm when} \quad 
    \widetilde g = u^{\frac{4}{n-2k}} g.
    \end{equation} 

    Motivated by this conformal invariance and the analysis of the classical 
    Yamabe problem, one poses the $k$th--order Yamabe problem: given a compact 
    Riemannian manifold $(M,g)$ of dimension $n$ and an integer $1 \leq k <  \frac{n}{2}$, we 
    seek a conformal metric $\widetilde g = u^{{4}/{(n-2k)}} g$ 
    such that $Q_{\widetilde g,k}$ is constant. (Here and below we fix the 
    background metric $g$ and identify the conformal metric $\widetilde{g}=
    u^{{4}/{(n-2k)}} g$ with its conformal factor $u$.)
    By \eqref{q_trans_law} this is equivalent to solving the PDE 
    \begin{equation} \label{const_q_pde} 
    P_{g,k} (u) = c u^{\frac{n+2k}{n-2k}} \quad {\rm on } \quad M,
    \end{equation} 
    where $c$ is a constant. We denote the set of solutions by 
    \begin{eqnarray*} 
    \mathcal{CQC}_{g,k} & = & \left\{ \widetilde g \in [g] : Q_{\widetilde g,k} 
    \textrm{ is constant}\right\} \\
    & = & \left\{ u \in W^{k,2}(M) : u> 0 \; \textrm{a.e.} \; \text{and} \; P_{g,k}(u) 
    = \lambda u^{\frac{n+2k}{n-2k}} \textrm{ for some } \lambda \in \R \right\}.
    \end{eqnarray*} 
    Since $2_k^*:=\frac{2n}{n-2k}$, Eq. \eqref{const_q_pde} has critical growth in the sense of the embedding $W^{k,2}(M)\hookrightarrow L^{2^*_k}(M)$.
    
    To establish a variational setting, we introduce the functional 
    \begin{equation} \label{functional} 
    \mathcal{Q}_{g,k}(u) = \frac{\int_{M} Q_{\widetilde g,k} \ud\mu_{\widetilde g}}
    {\operatorname{vol}_{\widetilde g}(M)^{\frac{n-2k}{n}}} 
    = \frac{2}{n-2k} \frac{\int_M u P_{g,k} (u) \ud\mu_g}{\left ( 
    \int_M u^{\frac{2n}{n-2k}} \ud\mu_g \right )^{\frac{n-2k}{n}}} .
    \end{equation} 
    We show below in Lemma \ref{lem:functional_regularity} that $\mathcal{Q}_{g,k}$ is a $\mathcal{C}^2$-functional 
    on $W^{k,2}(M)$. Furthermore, it follows from this proof that $\widetilde{g}$ is a 
    critical point of $\mathcal{Q}_{g,k}$ if and only if $Q_{\widetilde{g}, k}$ is 
    constant, which is in turn equivalent to $u$ solving \eqref{const_q_pde} with     
    $$c = \mathcal{Q}_{g,k} (u) \left (\| u \|_{L^{2_k^*}(M)} 
    \right ) ^{\frac{4k}{n-2k}}.$$
    Observe that $\mathcal{Q}_{g,k}$ is scale-invariant, so we will often restrict our 
    attention to unit-volume metrics in the conformal class $[g]$, or equivalently 
    $$\mathcal{B} = \left \{ u \in W^{k,2}(M) : u > 0 \textrm{ a.e.} \; {\rm and} \; \| u \|_{L^{2_k^*}(M)} = 1\right \} .$$
    We denote this restricted solution set by $\mathcal{CQC}_{g,k}^*= \mathcal{CQC}_{g,k} 
    \cap \mathcal{B}$.

    The variational setting suggests that we seek solutions in $\mathcal{CQC}_{g,k}$ 
    by studying a sequence minimizing the quotient $\mathcal{Q}_{g,k}$, and so we naturally 
    define the $k$-order Yamabe invariant 
    $$\mathcal{Y}_{k,+}(M,[g]) = \inf \left\{ \mathcal{Q}_{g,k}(u) : u \in W^{k,2} (M) \; {\rm and} \; 
    u > 0 \textrm{ a.e.} \right\}$$
    and the minimizing set 
    $$\mathcal{M}_{g,k} = \left\{ u \in W^{k,2}(M) \, :  \, u > 0 \text{ a.e.} \; {\rm and} \; 
    \mathcal{Q}_{g,k} (u) = \mathcal{Y}_{k,+}
    (M,[g]) \right\}. $$
    Observe that $\mathcal{M}_{g,k} \subset \mathcal{CQC}_{g,k}$. Once again, we 
    often restrict to minimizing solutions with unit volume, {\it i.e.} $\| u \|_{L^{2_k^*}(M)} = 1$ and denote this 
    restricted set by $\mathcal{M}_{g,k}^*$.

     The plus sign in the definition of $\mathcal{Y}_{k,+}$ signifies it is the infimum over functions in $W^{k,2}(M)$ that 
     are positive almost everywhere. If $k=1$, one can use the maximum principle to show the minimizer over all functions 
     in $W^{k,2}(M)$ is automatically positive almost everywhere. So we may examine the infimum over all functions 
     in $W^{k,2}(M)$. If $k \geq 2$, we lack the maximum principle. Hence, in general, there is no guarantee a minimizing 
     function is an admissible conformal factor. 
   
    To place these curvature quantities in a more familiar setting, 
    we remind the reader that $Q_{g,1}$ is (up to multiplication by a constant depending only 
    on $n$) the scalar 
    curvature $R_g$ and $P_{g,1}$ is the usual conformal Laplacian 
    $$P_{g,1} = -\Delta_g + \frac{n-2}{4(n-1)} R_g.$$
    In addition, we have
    \begin{equation}
        \label{Q curvature definition}
        Q_{g,2} = -\frac{1}{2(n-1)} \Delta_g R_g - \frac{2}{(n-2)^2} 
    |\Ric_g|^2 + \frac{n^3-4n^2+16n -16}{8(n-1)^2(n-2)^2} R_g^2
    \end{equation}
    is Branson's (fourth-order) $Q$-curvature and     
    $P_{g,2}$ is the Paneitz operator, given by 
    \begin{equation}
        \label{paneitz definition}
        P_{g,2} (u) = (-\Delta_g)^2u + \Div 
    \left ( \frac{4}{n-2} \Ric_g (\nabla u, \cdot) - 
    \frac{(n-2)^2+4}{2(n-1)(n-2)} R_g \nabla u \right ) 
    + \frac{n-4}{2} Q_{g,2},
    \end{equation}
    where $\Ric_g$ is the Ricci tensor. Thus \eqref{const_q_pde} reduces to 
    $$P_{g,1} (u) = -\Delta_g(u) + \frac{4(n-1)}{n-2} R_g u 
    = \frac{n(n-2)}{4} u^{\frac{n+2}{n-2}}$$
    in the second order case, which is one of the most well-studied 
    partial differential equations in geometric analysis, and 
    $$P_{g,2}(u) = \frac{n(n-4)(n^2-16)}{16} u^{\frac{n+4}{n-4}}$$
    in the fourth order case. 

    The existence of solutions in general and minimizing solutions in particular 
    is often a delicate question. In the classical case of the Yamabe problem ({\it i.e.} 
    $k=1$), the search started with Yamabe's work \cite{Yam} and continued with 
    the important contributions of Trudinger \cite{Tru} and Aubin \cite{Aub}. Schoen \cite{Sch1} 
    finally resolved the Yamabe problem, showing that any Riemannian metric on a 
    compact manifold without boundary is conformal to a constant scalar curvature metric. 
    Schoen's solution uses the Green's function of the conformal Laplacian in a 
    fundamental way, highlighting the important connection between the Green's function 
    and scalar curvature. Since then, a sizeable community has sought to understand 
    the solution and minimizing sets in all possible scenarios; we do not attempt to 
    summarize the extensive literature here. We only mention some results characterizing 
    the solution set in various situations. Combining the work of Trudinger and Aubin, one 
    sees that if $\mathcal{Y}_{1,+} (M,[g]) < 0$ then there exists a unique solution, whereas
    in the positive case the existence of many solutions is possible. Schoen conjectured 
    for many years the set of solutions is compact unless $(M,g)$ is conformally equivalent to the round sphere. 
    Eventually, Khuri, Marques and Schoen \cite{Khuri_Marques_Schoen} verified this 
    conjecture under positive mass theorem in the case that $n \leq 24$, while Marques and Brendle \cite{Marques_Brendle} 
    demonstrated noncompactness in higher dimensions. 
    
    The search for solutions and/or minimizers is, as expected, more complicated 
    in the higher order case. Here we highlight only some relatively recent results. 
    Gursky and Malchiodi \cite{Gursky_Malchiodi} showed that if $R_g \geq 0$ and $Q_{g,2} \geq 0$ 
    but not identically zero, then the Green's function of $P_{g,2}$ is positive and the 
    solution set $\mathcal{CQC}_{g,2}$ is nonempty. Later Hang and Yang \cite{HY} weakened 
    the hypotheses of Gursky and Malchiodi, showing that it suffices to assume $\mathcal{Y}_{g,1} 
    (M, [g]) > 0$. Most recently, Mazumdar and V\'etois \cite{mazumdar2022existence} showed the 
    minimizing set $\mathcal{M}_{g,k}$ is non-empty under the following conditions. First 
    they assume $\mathcal{Y}_{k,+}(M,[g])>0$. Next they assume that for every $\xi \in M$ the Green's 
    function $G_{g,k,\xi}$  (which is the unique function such that distributionally $P_{g,k} (G_{g,k,\xi})(x) 
    = \delta_\xi$, for $\delta_\xi$ the Dirac delta function at $\xi$) is positive everywhere on $M$.
    
    Assuming $G_{g,k,\xi}>0$, one can apply the analysis of Gursky 
    and Malchiodi \cite {Gursky_Malchiodi} (see also \cite{HY} and, for the general case $2 \leq k < \frac{n}{2}$, \cite{Michel2010}) to see that 
    \begin{equation} \label{Green's_expansion} 
    G_{g,k,\xi} (x) = b_{n,k} (\operatorname{dist}_g(x,\xi))^{2k-n} + m(\xi) + 
    \mathrm{o}(1).\end{equation} 
    The quantity $m(\xi) = m_g(\xi)$ is usually called the mass of the GJMS operator $P_{g,k}$ 
    at the point $\xi$. The final hypothesis of \cite{mazumdar2022existence} 
    is that if either $2k+1 \leq n \leq 2k+3$ or $(M,g)$ is locally conformally 
    flat, then $m(\xi) > 0$ for some $\xi \in M$. 

    Following the existence result of \cite{mazumdar2022existence}, we define the following 
    space of admissible metrics. Let $k \in \mathbb{N}$ with $n>2k$ and let $\alpha \in (0,1)$. 
    Observe that the space of $\mathcal{C}^{k,\alpha}$-Riemannian metrics on $M$, denoted by 
    $\mathrm{Met}^{k,\alpha}(M)$, is a convex 
    cone in the space of all symmetric, rank-two covariant tensor fields over $M$ whose 
    coefficients are $\mathcal{C}^{k,\alpha}$ functions, and we equip all these spaces of 
    tensor fields with the topology induced by convergence in the $\mathcal{C}^{k,\alpha}$ 
    norm. We say a metric $g \in \rm{Met}^{k,\alpha}(M)$ is \textbf{admissible} if it satisfies all 
    the existence hypotheses of Mazumdar and V\'etois \cite{mazumdar2022existence}, as described in the previous two paragraphs. We 
    denote the space of admissible metrics on $M$ by $\mathfrak{A}_{k,\alpha}(M)$. That is, we let 
    \begin{align*}
        \mathfrak{A}_{k,\alpha}(M) &:= \big\{ g \in \textrm{Met}^{k,\alpha}(M) \, : \, \mathcal Y_{k,+}(M, [g]) > 0, \, G_{g,k,\xi} > 0 \text{ for every } \xi \in M, \\
        & \qquad m_g(\xi) > 0 \text{ for some } \xi \in M   \big\}. 
    \end{align*}

\subsection{Quantitative stability estimates near minimizing metrics}

    We are primarily interested in the stability of the minimizing set and in 
    estimating the difference between $\mathcal{Q}_{g,k}(u)$ and its infimum in terms of the 
    distance between $u$ and the minimizing set $\mathcal{M}_{g,k}$. We 
    define 
    \begin{equation} \label{dist}
    d(u,\mathcal{M}_{g,k}) = \frac{ \inf\{ \|u-v\|_{W^{k,2}(M)} : v \in 
    \mathcal{M}_{g,k}\}} { \| u \|_{W^{k,2}(M)}}. 
    \end{equation} 
    Notice that this distance is well-defined whenever the minimizing set 
    $\mathcal{M}_{g,k}$ is non-empty, which in turn implies the solution set 
    $\mathcal{CQC}_{g,k}$ is non-empty, because $\mathcal{M}_{g,k} \subset 
    \mathcal{CQC}_{g,k}$. 
    We also interpret \eqref{const_q_pde} weakly. 
    In other words, we say that $u \in W^{k,2}(M)$ satisfies \eqref{const_q_pde} in the weak sense if
    \begin{equation*}
        \displaystyle \int_M u P_{g,k}(\phi) \ud\mu_g = 0 \quad {\rm for \ all} \quad  
        \phi \in \mathcal{C}^\infty (M).
    \end{equation*}
    
    Our first theorem in this manuscript is the following general stability 
    estimate. 

    \begin{theorem}\label{quant_stability_thm} 
     Let $n,k\in\mathbb N$ with $n>2k$ and let $(M,g)$ be a smooth, closed, $n$-dimensional Riemannian manifold.
     If $g \in \mathfrak{A}_{k,\alpha}(M)$ for some $\alpha\in (0,1)$,
    then there exists $\gamma=\gamma(g) \geq 0$ such that 
    \begin{equation}\label{stabilityestimate}
    d(u,\mathcal{M}_{g,k})^{2+\gamma}\lesssim \mathcal{Q}_{g,k}(u) - 
    \mathcal{Y}_{k,+}(M,[g]) \quad {\rm for \ all} \quad  u \in W^{k,2}(M).
    \end{equation} 
    %\color{red}
    Furthermore, there exists a subset $\mathcal{G}\subset \mathfrak{A}_{k,\alpha}(M)$ of 
    the space of admissible Riemannian metrics on $M$ which is open and dense with respect to the 
    $\mathcal{C}^{k,\alpha}$-topology such that if $g \in \mathcal{G}$,
    then \eqref{stabilityestimate} holds with $\gamma(g)=0$. 
    %\color{black}
    \end{theorem}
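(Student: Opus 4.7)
The plan is to follow the broad outline of Engelstein--Neumayer--Spolaor \cite{ENS} for $k=1$ and extend it to general GJMS operators, combining a local \L{}ojasiewicz--Simon inequality at each minimizer with a transversality argument for the genericity assertion.

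First, I would establish that the minimizing set $\mathcal{M}_{g,k}^{*}$ is nonempty and compact in $W^{k,2}(M)$. Nonemptiness is given by the Mazumdar--V\'etois existence theorem, which is exactly the motivation for working in $\mathfrak{A}_{k,\alpha}(M)$. Elliptic bootstrap applied to \eqref{const_q_pde} gives $\mathcal{C}^{2k,\alpha}$-bounds on minimizers, and the strict inequality $\mathcal{Y}_{k,+}(M,[g]) < \mathcal{Y}_{k,+}(\Ss^n)$, which is the role of the positive mass hypothesis in the Mazumdar--V\'etois argument, rules out bubbling along minimizing sequences and yields strong $W^{k,2}$-subconvergence.

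Next, I would establish a local \L{}ojasiewicz--Simon inequality around each $v \in \mathcal{M}_{g,k}^{*}$. Since $u \mapsto u^{(n+2k)/(n-2k)}$ is real-analytic on positive $u$ and $P_{g,k}$ is linear, $\mathcal{Q}_{g,k}$ is analytic near $v$, and its differential has the self-adjoint Fredholm linearization $L_v := P_{g,k} - \tfrac{n+2k}{n-2k}\,\lambda\, v^{4k/(n-2k)}$. Working on a slice transverse to the scaling direction, the infinite-dimensional \L{}ojasiewicz--Simon machinery (Simon, Chill) produces an exponent $\gamma_v \geq 0$ such that
\begin{equation*}
    \| u - v \|_{W^{k,2}(M)}^{2 + \gamma_v} \lesssim \mathcal{Q}_{g,k}(u) - \mathcal{Y}_{k,+}(M,[g])
\end{equation*}
in a $W^{k,2}$-neighborhood of $v$. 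Compactness of $\mathcal{M}_{g,k}^{*}$ allows a finite subcovering by such neighborhoods, and a concentration-compactness/contradiction argument (after reducing to $u \in \mathcal{B}$ by scale invariance) propagates the bound to all $u \in W^{k,2}(M)$, producing the uniform exponent $\gamma = \max_v \gamma_v$ and giving \eqref{stabilityestimate}.

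For the genericity claim, I would show that the subset $\mathcal{G} \subset \mathfrak{A}_{k,\alpha}(M)$ on which $\ker L_v$ is exactly one-dimensional, spanned only by the scaling direction, for every $v \in \mathcal{M}_{g,k}^{*}$, is open and dense. Openness follows from Fredholm stability together with the $\mathcal{C}^{k,\alpha}$-continuous dependence of minimizers on $g$. Density is a Sard--Smale transversality statement for the universal map
\begin{equation*}
    \Phi : \mathfrak{A}_{k,\alpha}(M) \times \mathcal{B} \to \bigl( W^{k,2}(M) \bigr)^{*}, \qquad (g,u) \mapsto d\mathcal{Q}_{g,k}(u),
\end{equation*}
where one shows that $D_{g}\Phi$ surjects modulo the range of $L_v$ at any nontrivial Jacobi field $\phi$, so that an arbitrarily small $\mathcal{C}^{k,\alpha}$-perturbation of the metric eliminates the extraneous kernel. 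Once $\ker L_v$ reduces to the scaling direction the standard quadratic \L{}ojasiewicz estimate gives $\gamma_v = 0$, and hence $\gamma = 0$ on $\mathcal{G}$. The main obstacle is expected to be this density step: for $k=1$ the explicit form of $P_{g,1}$ and classical conformal identities make $\delta_g P_{g,1}$ tractable, but for $k \geq 2$ the GJMS operator depends on $g$ through an intricate iterative construction, so computing $\delta_g P_{g,k}$ and proving that its image is rich enough to span $\ker L_v$ modulo scaling requires careful bookkeeping of the curvature-derived coefficients together with a unique continuation argument for the linearized operator.
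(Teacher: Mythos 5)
Your overall architecture (local estimate near each minimizer, globalization by compactness of minimizing sequences, genericity via an infinite-dimensional Sard--Smale argument) is the same as the paper's, but two of your steps have genuine gaps. First, the local inequality you claim, $\| u - v\|_{W^{k,2}(M)}^{2+\gamma_v} \lesssim \mathcal{Q}_{g,k}(u) - \mathcal{Y}_{k,+}(M,[g])$ in a neighborhood of a fixed minimizer $v$, is false whenever $v$ is degenerate and integrable, i.e.\ when the minimizers near $v$ form a positive-dimensional family (take $u$ equal to another nearby minimizer: the right side vanishes while the left does not). What one can hope for, and what the paper proves, is a bound on the distance to the \emph{set} $\mathcal{M}_{g,k}^*\cap\mathcal{B}(v,\delta)$. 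Moreover, the standard \L{}ojasiewicz--Simon machinery you invoke gives a \emph{gradient} inequality $|\mathcal{Q}(u)-\mathcal{Q}(v)|^{1-\theta}\lesssim\|\nabla_{\mathcal B}\mathcal{Q}(u)\|$, not directly a distance-type bound; converting it requires additional structure. The paper handles this by a Lyapunov--Schmidt reduction (writing $u=u_{\mathcal L}+u^\perp$ with $u_{\mathcal L}=v+\pi_K(u-v)+F(\pi_K(u-v))$), a spectral-gap estimate for the $u^\perp$ part, and the finite-dimensional \emph{distance} \L{}ojasiewicz inequality for the analytic reduced function $q$ on the kernel, treating the nondegenerate, integrable, and nonintegrable cases separately; your write-up skips exactly this mechanism.

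Second, the density step of the genericity argument --- which you yourself flag as the main obstacle --- is left unresolved, and the route you sketch (computing $\delta_g P_{g,k}$ from the iterative GJMS construction plus a unique continuation argument for the linearized operator) is not what is needed and would be hard to carry out: unique continuation for general $2k$-th order GJMS operators is not available off the shelf. The paper avoids both difficulties by exploiting the fact that $Q_{g,k}$ is a conformally variational Riemannian invariant in the sense of Case--Lin--Yuan: one never needs the full formula for $\delta_g P_{g,k}$, only the adjoint $\Gamma_g^*$ of the linearization of $g\mapsto Q_{g,k}$, and to rule out a nonzero $\psi$ orthogonal to the image of $\partial_g\mathcal{F}$ one tests with localized perturbations in normal coordinates and with conformal perturbations $h=\varphi g$; this yields $\langle\nabla u,\nabla\psi\rangle_g=0$ and then $f_{n,k}(u)\,\psi=0$ a.e., so $\psi\equiv 0$ simply because $u>0$ --- no unique continuation enters. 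In addition, the paper formulates the transversality map through the Green's function, $\mathcal{F}(g,u)=u-\int_M G_{g,k,\cdot}\,f_{n,k}(u)\,d\mu_g$, so that $\partial_u\mathcal{F}$ is a compact perturbation of the identity (Fredholm of index zero), a point your dual-space map $\Phi(g,u)=d\mathcal{Q}_{g,k}(u)$ would also have to address; and note that in the constrained (unit-volume) formulation generic nondegeneracy means trivial kernel, since the scaling direction is not a Jacobi field of the fixed-constant equation.
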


    We interpret \eqref{stabilityestimate} as saying there exists a constant $c$ 
    depending only on $g$ such that 
    $$c d(u, \mathcal{M}_{g,k})^{2+\gamma} \leq \mathcal{Q}_{g,k}(u) - 
    \mathcal{Y}_{k,+}(M,[g]). $$
    When $\gamma=0$ we refer to \eqref{stabilityestimate} as 
    {\bf quadratic stability}, and when $\gamma>0$ we call \eqref{stabilityestimate} \textbf{degenerate stability} or 
    {\bf higher-order stability}. The last part of the statement of Theorem \ref{quant_stability_thm} may be phrased as saying that quadratic stability happens \textbf{generically}. 

    \begin{remark}
        The ideas used to prove the genericity part of our main theorem for $2\leqslant k \leqslant \frac{n}{2}$ are in contrast with the ones in the case $k=1$.
        On the one hand, our techniques rely on some results of Case, Lin and Yuan \cite{CaseLinYuan} combined with the transversality method inspired by \cite{MR2560131,MR2982783,MR4314216}.
        On the other hand, the argument for the scalar curvature
        inspired by the ones given by Anderson \cite{MR3336633} relies on some facts that are far from being known for higher-order curvatures such as a classification result by Obata \cite{MR303464}.
    \end{remark}

    Previous work, which we summarize in this paragraph, proved quadratic 
    stability in the case that $(M,g)$ is conformally equivalent to the 
    round sphere $(\Ss^n, \overset{\circ}{g})$. Bianchi and Egnell \cite{BE} proved 
    this in the case $k=1$, then Lu and Wei \cite{LW} proved it in the case $k=2$, Bartsch, Weth and Willem \cite{MR2018667} 
    for integers $1 \leq k < \frac{n}{2}$, and finally Chen, Frank and Weth \cite{CFW} for each $k \in (0,n/2)$, 
    including non-integers. Notice that when the background manifold is conformally equivalent to the round 
    sphere $(\Ss^n, \overset{\circ}{g})$, through the stereographic projection Eq. \eqref{stabilityestimate} is a refined 
    version of the classical Sobolev inequality on the standard Euclidean space $(\mathbb R^n,\delta)$, namely 
    \[\|u\|_{W^{k,2}(\mathbb R^n)}\lesssim \|u\|_{L^{2^*_k}(\mathbb R^n)}.\] In the setting of generic manifolds, 
    Engelstein, Neumayer, and Spolaor \cite{ENS} proved the stability estimate \eqref{stabilityestimate} in the 
    case $k=1$. 
     
    From a geometric point of view, one drawback of our first main result is that the distance
    $d(u, \mathcal{M}_{g,k})$ may depend on the choice of background metric $g \in[g]$. This is because 
    the $W^{k,2}$-norm, with which $d(u, \mathcal{M}_{g.k})$ is defined, is not conformally invariant. 
    However, we can modify our distance function to obtain the following 
    conformally invariant stability estimate. 
    
    We define the following conformally invariant norm for metrics $h=u^{{4}/{(n-2k)}} g$:
    \begin{equation}
        \label{norm 2*}
         \left\|h\right\| :=\left(\int_M|u|^{\frac{2n}{n-2k}} \ud \mu_g\right)^{\frac{n-2k}{2n}}.
    \end{equation}
    The definition appears to depend on the choice of the background metric $g$, but the 
    following computation shows $\| \cdot \|$ depends only on the 
    conformal class $[g]$. If $\widehat{g} = \phi^{{4}/{(n-2k)}} g$ is a conformal metric, then 
    $h = u^{{4}/{(n-2k)}} g = \widehat{u}^{{4}/{(n-2k)}} \widehat{g}$ where 
    $u = \widehat{u} \phi$, and so 
 \[
  \| h \| = \int_M |u|^{\frac{2n}{n-2k}} \ud\mu_g = \int_M |\widehat{u}|^{\frac{2n}{n-2k}}
    \phi^{\frac{2n}{n-2k}} \ud\mu_g 
     =  \int_M |\widehat{u}|^{\frac{2n}{n-2k}} \ud\mu_{\widehat{g}}.  
 \]
      
    Similarly, in the case when $\mathcal{Y}_{k,+}(M,[g]) \geq 0$, for $h = u^{4/(n-2k)} g$ we define
    \begin{equation}
        \label{norm gjms}
        \left\|h\right\|_*=\left(\int_M u P_{g,k} (u) \ud \mu_g\right)^{1 / 2}
    \end{equation}
    for any $g \in \mathrm{Met}^\infty(M)$ with $\operatorname{vol}_g(M)=1$. Again, although $\|\cdot\|_*$ is defined 
    with respect to a fixed conformal representative, it turns out that the definition is independent of this 
    choice. Namely that for any $\hat{g} \in [g]$ and $h = u^{{4}/{(n-2k)}} g = \hat{u}^{{4}/{(n-2k)}} \hat{g} \in [g]$, one has 
        $$
       % \label{conformal invariance gjms}
        \|h\|_* = \left(\int_M u P_{g,k} (u) \ud \mu_g\right)^{1 / 2}= \left(\int_M \hat{u} P_{\hat{g},k} (\hat{u}) 
        \ud \mu_{\hat{g}}\right)^{1 / 2}.
       $$ 
    
    \begin{corollary}\label{quant_stability_metrics_thm}
          Let $n,k\in\mathbb N$ with $n>2k$ and let $(M,g)$ be a smooth, closed, $n$-dimensional Riemannian manifold. If $g \in \mathfrak{A}_{k,\alpha}(M)$ for some $\alpha\in(0,1)$ is an 
         admissible Riemannian metric, then 
        there exists $\gamma(g) \geq 0$ such that
        $$
        \left(\frac{\inf \{\|h-\tilde{g}\|: \tilde{g} \in \mathcal{M}_{g,k}\}}
        {\operatorname{vol}_h(M)^{\frac{n-2k}{2n}}}\right)^{2+\gamma} \lesssim \mathcal{Q}_{g,k}(u) 
        -\mathcal{Y}_{k,+}(M,[g]) .
        $$ 
        When $ \mathcal{Q}_{g,k}(u)-
        \mathcal{Y}_{k,+}(M,[g]) \leqslant \delta_0$ for some $0<\delta_0\ll 1$ small enough, there exists $\gamma \geq 0$ 
        depending on $(M,g)$ such that
        $$
         \left(\frac{\inf \left\{\|h-\tilde{g}\|_*: \tilde{g} \in \mathcal{M}_{g,k}\right\}}
         {\operatorname{vol}_h(M)^{1 / 2^*}}\right)^{2+\gamma}\lesssim \mathcal{Q}_{g,k}(u) 
         -\mathcal{Y}_{k,+}(M,[g]) 
         \quad {\rm for \ all} \quad h \in[g].
        $$          
        Moreover, for an open dense subset in the $\mathcal{C}^2$ topology 
        on the space of conformal classes 
        of ${\rm Met}^{\infty}(M)$, the above inequalities hold with $\gamma=0$.
    \end{corollary}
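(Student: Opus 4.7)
The plan is to deduce the corollary from Theorem \ref{quant_stability_thm} by translating the conformally invariant norms $\|\cdot\|$ and $\|\cdot\|_*$ into the $W^{k,2}$-based distance $d(u,\mathcal{M}_{g,k})$. Since $\mathcal{Q}_{g,k}$, both norms, and the normalizing volume powers are all invariant under the rescaling $u\mapsto tu$, I would first reduce to the case $\|u\|_{L^{2_k^*}(M)}=1$. As $\mathcal{M}_{g,k}$ is itself scale-invariant, the infimum may be restricted to $\widetilde g=v^{4/(n-2k)}g$ with $v\in \mathcal{M}_{g,k}^*$, i.e.\ $\|v\|_{L^{2_k^*}(M)}=1$. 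Under this normalization $\|h\|=\operatorname{vol}_h(M)^{(n-2k)/(2n)}=1$, so the first inequality reduces to controlling $\inf_v \|u-v\|_{L^{2_k^*}(M)}^{2+\gamma}$ by the deficit.

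I would then execute a near/far dichotomy. If the deficit $\mathcal{Q}_{g,k}(u)-\mathcal{Y}_{k,+}(M,[g])$ exceeds a small fixed threshold $\delta_0$, then the triangle inequality combined with the normalization yields $\|u-v\|_{L^{2_k^*}(M)}\leq 2$, so the inequality holds after adjusting the implicit constant. If instead the deficit lies below $\delta_0$, Theorem \ref{quant_stability_thm} gives $d(u,\mathcal{M}_{g,k})$ small, placing $u$ close in $W^{k,2}(M)$ to some $v\in \mathcal{M}_{g,k}^*$. The admissibility hypotheses $\mathcal{Y}_{k,+}(M,[g])>0$ and $G_{g,k,\xi}>0$ imply coercivity of the bilinear form $(u,v)\mapsto \int_M u P_{g,k}(v)\ud\mu_g$ on $W^{k,2}(M)$, and the identity $\int_M v P_{g,k}(v)\ud\mu_g=\tfrac{n-2k}{2}\mathcal{Y}_{k,+}(M,[g])$ for unit-volume minimizers forces $\sup_{v\in \mathcal{M}_{g,k}^*}\|v\|_{W^{k,2}(M)}<\infty$; consequently $\|u\|_{W^{k,2}(M)}$ is uniformly bounded in the near regime. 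Invoking the Sobolev embedding $W^{k,2}(M)\hookrightarrow L^{2_k^*}(M)$, I obtain
\[
\|u-v\|_{L^{2_k^*}(M)}^{2+\gamma}\lesssim \|u-v\|_{W^{k,2}(M)}^{2+\gamma}=\|u\|_{W^{k,2}(M)}^{2+\gamma}\,d(u,\mathcal{M}_{g,k})^{2+\gamma}\lesssim \mathcal{Q}_{g,k}(u)-\mathcal{Y}_{k,+}(M,[g]),
\]
which establishes the first inequality.

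For the $\|\cdot\|_*$ inequality the same coercivity supplies the norm equivalence $\|h-\widetilde g\|_*\sim \|u-v\|_{W^{k,2}(M)}$ and also gives $\operatorname{vol}_h(M)^{1/2_k^*}\sim \|u\|_{W^{k,2}(M)}$ in the small-deficit regime, where $\|u\|_{W^{k,2}(M)}$ is uniformly controlled as above. Under the standing hypothesis $\mathcal{Q}_{g,k}(u)-\mathcal{Y}_{k,+}(M,[g])\leq \delta_0$, a direct translation of Theorem \ref{quant_stability_thm} then yields the second inequality. Finally, the genericity claim follows by restriction: if $g$ lies in the open dense $\mathcal{C}^{k,\alpha}$-subset $\mathcal{G}\subset \mathfrak{A}_{k,\alpha}(M)$ supplied by Theorem \ref{quant_stability_thm}, both conformally invariant inequalities inherit the exponent $\gamma(g)=0$, and the corresponding open denseness in the $\mathcal{C}^2$-topology on conformal classes is immediate.

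The main obstacle is reconciling the $L^{2_k^*}$-based normalization intrinsic to the conformally invariant setup with the $W^{k,2}$-based distance in Theorem \ref{quant_stability_thm}. This is resolved by exploiting admissibility to obtain uniform $W^{k,2}$ bounds on elements of $\mathcal{M}_{g,k}^*$ via coercivity of $P_{g,k}$, and by splitting into near and far regimes so that the triangle inequality handles the far case while the compactness-type bound handles the near one.
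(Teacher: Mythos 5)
Your proposal is correct and follows essentially the same route as the paper: both deduce the corollary from Theorem \ref{quant_stability_thm} via the Sobolev embedding for the numerators, a near/far dichotomy, and the comparability $\|u\|_{W^{k,2}(M)}\simeq\|u\|_{L^{2_k^*}(M)}$ near the minimizing set (together with boundedness of the bilinear form $\int_M (u-v)P_{g,k}(u-v)\ud\mu_g$ for the $\|\cdot\|_*$ case). Your coercivity argument for the uniform $W^{k,2}$-bound on normalized near-minimizers is just a more explicit justification of the norm equivalence the paper asserts, so the two proofs match in substance.
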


Before going on, let us quickly sketch some heuristic ideas for the proof of Theorem \ref{quant_stability_thm}. 
    In a sense we make precise in our proofs below, quadratic stability is closely related to the nondegeneracy of $u_0$ as a minimizer of $\mathcal Q_{g,k}$. 
    Indeed, if $\displaystyle g_0 = u_0^{{4}/{(n-2k)}} g$ is an element of the minimizing
    set $\mathcal{M}_{g,k}$ and we write a nearby metric in the conformal 
    class as $g_v = (u_0 + v)^{{4}/{(n-2k)}} g$, we can write out a formal 
    Taylor expansion  
    \begin{eqnarray*}
        \mathcal{Q}_{g,k}(u_0 + v) & = & \mathcal{Q}_{g,k}(u_0) + 
        D\mathcal{Q}_{g,k} (u_0)(v) + D^2\mathcal{Q}_{g,k} (u_0) (v,v) + 
        \mathcal{O}(\| v \|^3) \\ 
        & = & \mathcal{Y}_{k,+}(M,[g]) + D^2\mathcal{Q}_{g,k}(u_0) (v,v) 
        + \mathcal{O}(\| v\|^3).
    \end{eqnarray*}
    Here we used the fact that $u_0 \in \mathcal{M}_{g,k}$ and that 
    $g_0$ is a constant $Q$-curvature metric, which, as we mentioned above, 
    means $u_0$ is a critical point of $\mathcal{Q}_{g,k}$. If $u_0$ is a 
    nondegenerate critical point, then the Hessian $D^2\mathcal{Q}_{g,k} (u_0)$ does not vanish on any $v$, implying in turn that the 
    difference $\mathcal{Q}_{g,k}(u_0 + v) - \mathcal{Y}_{k,+}(M,[g])$ is indeed quadratic in $\| v \|$. Furthermore, by a generalized version of Sard--Smale's theorem (see Lemma \ref{lm:henry}), we expect nondegeneracy to happen generically. All of this is carried out in a  rigorous way in Section \ref{sec:generic_stable}. 

    \subsection{Examples for higher-order stability}

    Given the description in the previous subsection, it is of great interest 
    to produce examples of minimizing metrics satisfying a superquadratic 
    stability estimate, i.e., manifolds $(M,g)$ such that Theorem \ref{quant_stability_thm} holds 
    for some $\gamma > 0$, but not for $\gamma = 0$. Motivated by a classical example of 
    Schoen \cite{MR1173050} and a newer one by Carlotto, Chodosh and 
    Rubinstein \cite{MR3352243} (based on work by Caffarelli, Gidas and Spruck \cite{CGS} and by 
    Schoen \cite{Schoen1989}) we produce two different families of 
    constant curvature metrics, each of which satisfies a higher-order stability estimate. The degree of degeneracy of the metrics 
    as minimizers, or more generally as critical points, is made precise by the Adams-Simon positivity condition 
    ($\mathrm{AS}_p$ condition for short), for which, we refer to Definition \ref{def:adams-simon} below. 

    \begin{theorem} \label{AS3_examples}
    Let $m, \ell \in \mathbb{N}$ with $\ell \geq 2$, let $\lambda \in \R$, and let $(M,g,\lambda)$ 
    be an $m$-dimensional Einstein manifold with Einstein constant $\lambda$.  
    If $m\gg1$ is sufficiently large, the function $u\equiv 1$ is a degenerate critical 
    point of $\mathcal{Q}_{h,2}$ for certain values of $\lambda$, where $(X,h) = (M \times \Ss^\ell, g \oplus \overset{\circ}{g} )$. Moreover,
    one can replace $(\Ss^\ell, \overset{\circ}{g})$ by $(\mathbb{CP}^\ell, g_{\rm FS})$, 
    where $g_{\rm FS}$ is the Fubini-Study metric, in this example. 
    \end{theorem}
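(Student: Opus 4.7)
The plan is to reduce degeneracy of $u\equiv 1$ on $(X,h)$ to finding real roots of a quadratic polynomial in the Einstein constant $\lambda$. First I would verify that $u\equiv 1$ is actually a critical point of $\mathcal{Q}_{h,2}$: since both factors are Einstein, the product metric has parallel Ricci tensor with constant $R_h = m\lambda + \ell(\ell-1)$ and constant $|\mathrm{Ric}_h|^2 = m\lambda^2 + \ell(\ell-1)^2$, so by \eqref{Q curvature definition} $Q_{h,2}$ is a constant and $u\equiv 1$ solves \eqref{const_q_pde}. By the heuristic computation following Corollary \ref{quant_stability_metrics_thm}, degeneracy of $u\equiv 1$ is then equivalent to the existence of a nonzero $v\in W^{2,2}(X)$ orthogonal to the constants in $L^2(X,h)$ with $P_{h,2}(v) = \tfrac{n+4}{2}\,Q_{h,2}\,v$, where $n:=m+\ell$.

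Next I would diagonalize $P_{h,2}$ by separation of variables. Substituting $\Delta_h = \Delta_g + \Delta_{\overset{\circ}{g}}$ and the block-diagonal expression for $\mathrm{Ric}_h$ into \eqref{paneitz definition}, on factorized eigenfunctions $v(x,y)=\phi(x)\psi(y)$ with $\Delta_g\phi = -\mu\phi$ and $\Delta_{\overset{\circ}{g}}\psi = -\nu\psi$ the operator $P_{h,2}$ acts by multiplication by
$$(\mu+\nu)^2 - \frac{4\bigl(\lambda\mu+(\ell-1)\nu\bigr)}{n-2} + \frac{(n-2)^2+4}{2(n-1)(n-2)}\,R_h\,(\mu+\nu) + \frac{n-4}{2}\,Q_{h,2}.$$
The kernel condition therefore collapses to the single algebraic identity obtained by equating the first three terms to $4\,Q_{h,2}$.

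Choosing $\mu=0$, $\nu=\ell$ (realized by $v(x,y)=\psi(y)$ with $\psi$ a first nonzero eigenfunction of $\Delta_{\overset{\circ}{g}}$ on $\Ss^\ell$), the compatibility $\int_X v\,\ud\mu_h=0$ is automatic. With these values the left-hand side of the identity is affine in $\lambda$ through $R_h$, while $Q_{h,2}$ is quadratic in $\lambda$, so the equation becomes a single quadratic polynomial equation $p_{m,\ell}(\lambda)=0$. A direct asymptotic analysis of its coefficients as $m\to\infty$ with $\ell$ fixed shows that the leading $\lambda^2$-coefficient is of order $-m$ while the constant term converges to $\tfrac{1}{2}\ell^2(\ell+1)>0$, so the discriminant is positive for all sufficiently large $m$. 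Each real root $\lambda_\pm(m,\ell)$ is then realized as the Einstein constant of some compact $m$-manifold (for example a rescaled quotient of $\Ss^m$ for positive roots, or of a compact hyperbolic manifold for negative ones), producing a genuinely degenerate critical point since $\psi$ sits in the nontrivial part of $\ker\mathrm{Hess}\,\mathcal{Q}_{h,2}(1)$.

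The main obstacle is the quantitative bookkeeping required to verify positivity of the discriminant of $p_{m,\ell}$ uniformly in the relevant range of $m$; the coefficients involve the explicit rational functions in $n$ appearing in \eqref{Q curvature definition} and \eqref{paneitz definition}, and their signs must be tracked carefully to locate the real roots. Once this is done, the $(\mathbb{CP}^\ell,g_{\mathrm{FS}})$ variant is immediate: the Fubini-Study metric is Einstein with an explicit Einstein constant, its Laplacian on functions has an explicit first nonzero eigenvalue whose eigenspace is orthogonal to the constants, and substituting these values for $(\ell-1,\ell)$ in the algebraic identity yields another quadratic polynomial in $\lambda$ with structurally identical leading-order asymptotics, so the same discriminant argument produces real roots for $m\gg 1$ and thus the second family of degenerate examples claimed in Theorem \ref{AS3_examples}.
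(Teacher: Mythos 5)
Your proposal is correct as a proof of the literal statement, and it follows essentially the same strategy as the paper: reduce degeneracy of $u\equiv 1$ to the kernel condition $P_{h,2}v=\tfrac{n+4}{2}Q_{h,2}v$ with $\int_X v\,\ud\mu_h=0$, diagonalize $P_{h,2}$ on product eigenfunctions of the two Einstein factors (your scalar-action formula and the reduction to ``first three terms $=4Q_{h,2}$'' are both right), take $v=1\otimes\psi$, and observe that the condition is a quadratic equation in $\lambda$ whose leading coefficient is $\sim -\tfrac{m}{2}<0$ while the constant term stays positive as $m\to\infty$, so it has two real roots; your asymptotics (constant term $\to\tfrac12\ell^2(\ell+1)$) check out. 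The one genuine difference is the choice of $\psi$: you take a first spherical harmonic ($\nu=\ell$), whereas the paper takes the degree-two harmonic $\widetilde v=x_1x_2+x_2x_3+x_3x_1$ ($\nu=2(\ell+1)$), and this choice is deliberate. By \eqref{AS3} the cubic term of the reduced functional is proportional to $\int_X v^3\,\ud\mu_h$, and the degree-two harmonic has $\int_{\Ss^\ell}\widetilde v^{\,3}\,\ud\mu_{\overset{\circ}{g}}\neq 0$, which is exactly what allows Propositions \ref{prop:gamma=1 m=2 sphere} and \ref{prop:gamma=1 m=2 CP} to verify the $\mathrm{AS}_3$ condition, the real content behind Theorem \ref{AS3_examples}; a first spherical harmonic is odd under the antipodal map, so for your $v$ one has $\int_X v^3\,\ud\mu_h=0$ and the kernel element gives no information about $q_3$. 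Your construction therefore yields degeneracy (at different values of $\lambda$ than the paper's $\lambda_\pm$) but cannot be upgraded to the $\mathrm{AS}_3$ statement or to the sign information on the roots recorded in the propositions; if those finer conclusions are the target, you need a kernel element with nonvanishing cubic integral, i.e.\ the paper's degree-two harmonic (and, on $\mathbb{CP}^\ell$, the corresponding $(1,1)$-type eigenfunction it uses).
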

    
    Please see the statements of Propositions \ref{prop:gamma=1 m=2 sphere}
    and \ref{prop:gamma=1 m=2 CP} below for more detailed statements. 
    The reader will see that our proof of Theorem \ref{AS3_examples} is quite 
    flexible. In particular, one should be able to prove a similar result for 
    $k \geq 3$, but the computations quickly become unwieldy. While our proof is 
    inspired by the example given in  \cite[Section 5.1]{MR3352243}, our techniques
    end up being somewhat different. Carlotto {\it et al} are able to choose 
    the individual manifolds such that the product $h= g \oplus g_{\rm FS}$ 
    is Einstein on $M \times \mathbb{CP}^\ell$, whereas we cannot. This complicates 
    our verification of the $\mathrm{AS}_3$ condition, which in turn explains the requirement that
    the dimension of $M$ is large enough. 
    This is in contrast with the case $k=1$, where no restrictions are imposed on $\dim(M)$. 

    We can also exhibit an explicit example of a manifold for which Theorem \ref{quant_stability_thm} holds with $\gamma = 2$. 
    Our analysis covers every integer order $1 \leq k < \frac{n}{2}$, thus extending the 
    corresponding example for $k=1$ discussed in \cite{MR3352243}, whose degenerate 
    stability was recently analyzed in detail in \cite{F}. We fix $n > 2k$ and consider the manifold  
    \begin{equation}
     \label{M definition}
     M = \mathbb S^1(\tau_0) \times \mathbb S^{n-1}
    \end{equation} 
    with the standard (non-normalized) product metric $h$. Here $\mathbb S^{n-1}$ denotes the unit sphere in $\R^n$ and 
    $\mathbb S^1(\tau_0)$ denotes the unit
    sphere in $\R^2$ of an appropriately chosen radius $\tau_0 > 0$, see \eqref{tau 0 definition general m} below.

    \begin{theorem} \label{AS4_examples} 
    Let $n,k\in\mathbb N$ with $n>2k$.
    There exists a specific $\tau_0 > 0$, defined below as the 
    unique number satisfying \eqref{sin Pm}, 
    such that for $(\Ss^1(\tau_0) \times \Ss^{n-1}, h)$ the 
    constant function $u\equiv 1$ is a degenerate minimizer of the functional $\mathcal Q_{h,k}$ which satisfies the $\mathrm{AS}_4$ condition. 
    Moreover, the function $u \equiv 1$ satisfies 
    \eqref{stabilityestimate} with $\gamma=2$, and does not satisfy \eqref{stabilityestimate} for any $\gamma < 2$.  
    \end{theorem}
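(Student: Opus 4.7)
The plan is to exploit the product structure of $M = \Ss^1(\tau_0) \times \Ss^{n-1}$ to diagonalize the Jacobi operator of $\mathcal{Q}_{h,k}$ at $u \equiv 1$ by separation of variables, to verify the Adams--Simon positivity condition $\mathrm{AS}_4$ by an explicit Taylor expansion in the kernel direction, to invoke the degenerate branch of Theorem~\ref{quant_stability_thm} for the upper bound, and finally to test on a one-parameter family to obtain the matching lower bound. Since $h = \overset{\circ}{g}_{\Ss^1(\tau_0)} \oplus \overset{\circ}{g}_{\Ss^{n-1}}$ is a product of Einstein (and in the $\Ss^1$ case, flat) metrics, the $Q$-curvature $Q_{h,k}$ is automatically constant, so $u\equiv 1$ is a critical point of $\mathcal{Q}_{h,k}$ and the linearization of \eqref{const_q_pde} at $u\equiv 1$ is $\widetilde L := P_{h,k} - \tfrac{n+2k}{n-2k}\lambda_0\,\mathrm{Id}$, with $\lambda_0 = \tfrac{n-2k}{2}Q_{h,k}$.

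First I would diagonalize $\widetilde L$ in the eigenbasis of $P_{h,k}$, which on the product consists of the functions $\cos(j\theta/\tau_0)Y_{\ell,m}(\omega)$ and $\sin(j\theta/\tau_0)Y_{\ell,m}(\omega)$, where $Y_{\ell,m}$ is a spherical harmonic on $\Ss^{n-1}$ of degree $\ell$. A straightforward computation expresses the corresponding eigenvalue of $P_{h,k}$ as a polynomial of degree $k$ in the product combinatorial quantities $j^2/\tau_0^2$ and $\ell(\ell+n-2)$. The radius $\tau_0$ is then defined by the algebraic equation~\eqref{sin Pm}, which is precisely the condition that the $(j,\ell)=(1,0)$ eigenvalue of $\widetilde L$ vanishes. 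A transversality check in $\tau_0$ ensures no other mode accidentally enters the kernel at this specific radius, so $\ker \widetilde L$ is two-dimensional, spanned by $\phi_+(\theta) = \cos(\theta/\tau_0)$ and $\phi_-(\theta) = \sin(\theta/\tau_0)$. Positivity of $\widetilde L$ on the orthogonal complement, combined with the global existence and positivity hypotheses placing $h\in \mathfrak{A}_{k,\alpha}(M)$, upgrades $u\equiv 1$ from a degenerate critical point to a degenerate minimizer.

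Next I would verify the $\mathrm{AS}_4$ condition of Definition~\ref{def:adams-simon} by expanding $\mathcal{Q}_{h,k}(1+t\phi)$ for $\phi = \alpha\phi_+ + \beta\phi_-$ up to order four in $t$. The linear term vanishes by criticality; the quadratic term vanishes because $\phi\in \ker\widetilde L$; the cubic term vanishes because $\int_{\Ss^1(\tau_0)} \phi^3 \, \ud\theta = 0$ for a pure first Fourier mode. The quartic coefficient decomposes into a normalization piece proportional to $\int_M \phi^4\,\ud\mu_h$ with an explicit positive constant $c_{n,k}$, and a second-variation-of-numerator piece of the form $\tfrac12 \int_M \phi^2 P_{h,k}(\phi^2)\,\ud\mu_h$. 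Applying the product-to-sum identity $\phi^2 = \tfrac12(\alpha^2+\beta^2) + \tfrac12(\alpha^2-\beta^2)\cos(2\theta/\tau_0) + \alpha\beta\sin(2\theta/\tau_0)$ and invoking $P_{h,k}$ mode-by-mode reduces this second piece to a rational function of the $(j,\ell)=(2,0)$ eigenvalue of $P_{h,k}$. Adding the two contributions, I expect the total quartic coefficient to be a strictly positive constant times $(\alpha^2+\beta^2)^2$, which is precisely $\mathrm{AS}_4$ with positive sign.

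Positivity in $\mathrm{AS}_4$ feeds into the degenerate Lojasiewicz machinery used to prove Theorem~\ref{quant_stability_thm}, yielding \eqref{stabilityestimate} with $\gamma = 2$. For the matching lower bound I would test on the family $u_t = 1 + t\phi_+$: since the isometry group of $\Ss^1(\tau_0)\times \Ss^{n-1}$ acts trivially on $u\equiv 1$, the tangent space to $\mathcal{M}_{h,k}$ at $u\equiv 1$ is just the scaling direction, which is $L^2$-orthogonal to $\phi_+$, so $d(u_t, \mathcal{M}_{h,k})\gtrsim |t|$; while the quartic expansion just computed gives $\mathcal{Q}_{h,k}(u_t)-\mathcal{Y}_{k,+}(M,[h]) = C t^4 + \mathrm{O}(t^5)$. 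Any \eqref{stabilityestimate} with $2+\gamma < 4$ would be violated as $t\to 0$, ruling out $\gamma < 2$. The principal technical obstacle is the explicit sign computation of the quartic coefficient uniformly in $1\leq k < n/2$: tracking both the $L^{2^*_k}$-normalization contribution and the GJMS cross-term on the mode $\cos(2\theta/\tau_0)$, and showing the resulting combination is positive at the specific $\tau_0$ singled out by \eqref{sin Pm}, is the delicate step where the integer order $k$ enters nontrivially.
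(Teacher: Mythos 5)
Your outline of the linear analysis (separation of variables, eigenvalues $\alpha_{m,j}(\tau)$, the choice of $\tau_0$ so that the $(1,0)$-mode enters the kernel, two-dimensional degeneracy spanned by $\cos(t/\tau_0)$ and $\sin(t/\tau_0)$) matches the paper. But there is a genuine gap at the step where you claim that positivity of $\widetilde L$ on the orthogonal complement, ``combined with the global existence and positivity hypotheses,'' upgrades $u\equiv 1$ from a critical point to a minimizer. Nonnegativity of the second variation is a \emph{local} statement and does not give global minimality, and the theorem (as well as the quantities $\mathcal Y_{k,+}(M,[h])$ and $d(u,\mathcal M_{h,k})$ appearing in \eqref{stabilityestimate}) requires knowing that $u\equiv 1$ attains the infimum and, moreover, that $\mathcal M_{h,k}$ consists \emph{exactly} of the constants. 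This is the hardest part of the paper's argument and is entirely absent from your proposal: one needs (i) existence of a minimizer, which the paper gets from the strict binding inequality $\mathcal Q_{g,k}(1)<\mathcal S_{n,k}$ (a delicate Gamma-function/Stirling estimate, Lemma \ref{lemma strict binding ineq}) together with the explicit positive Green's function \eqref{Green's Pgm}; (ii) radial symmetry of any minimizer via the Emden--Fowler transform \eqref{Pgm emden fowler trafo} and the moving-plane method for $(-\Delta)^k$ on $\R^n\setminus\{0\}$; and (iii) uniqueness of constants among $t$-dependent competitors via Beckner's dual-spectral HLS argument, which rests on the comparison of Gamma-quotients with $\mathsf P_k(\tau_0^{-2}\ell^2)$ in Lemma \ref{lemma Phi < Lambda gen m}. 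Without this, your test-family computation of $\mathcal Q_{h,k}(u_t)-\mathcal Y_{k,+}$ and your lower bound $d(u_t,\mathcal M_{h,k})\gtrsim |t|$ are both unjustified, and the claim ``degenerate minimizer'' itself is unproved.

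Two further points. First, the quartic coefficient relevant for $\mathrm{AS}_4$ is that of the reduced function $q(\varphi)=\mathcal Q_{h,k}(1+\varphi+F(\varphi))$, so the Lyapunov--Schmidt correction in the $\cos(2t/\tau_0)$ direction must be included; this produces the competition that the paper resolves by completing a square in the $b_m\cos(2s/\tau_0)$ component (see \eqref{bm square}), and positivity of the result is precisely the inequality $\alpha_{2,0}>\frac{2\cdot 2_k^*-1}{2_k^*+1}\,\alpha_{1,0}$, proved via convexity of $\mathsf P_k$. You flag this as ``the delicate step'' but do not carry it out, and your heuristic $\tfrac12\int_M\phi^2P_{h,k}(\phi^2)$ is not the term that actually arises. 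Second, ``$\mathrm{AS}_4$ feeds into the degenerate \L{}ojasiewicz machinery, yielding $\gamma=2$'' overstates what Theorem \ref{quant_stability_thm} delivers: that theorem only gives \emph{some} $\gamma\ge 0$, and extracting the exponent $4$ requires either a quantitative analysis of the reduced function $q$ on the kernel or, as the paper does, a direct global argument (the secondary Bianchi--Egnell refinement of Lemmas \ref{lemma frank 1}--\ref{lemma frank 3} plus the compactness/contradiction step), which again uses the characterization of $\mathcal M_{h,k}$ as the constants.
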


    We point out some complications we encounter in the proof of Theorem \ref{AS4_examples}, as compared to 
    its second-order counterparts in \cite{Schoen1989, MR3352243}. First, in the case that $k=1$ one only 
    encounters polynomials of degree at most two, whose roots are relatively easy to find. In our analysis, we 
    must find roots of higher-order polynomials, which we can do only through extremely careful and systematic 
    accounting. More significantly, a fundamental ingredient in \cite{Schoen1989, MR3352243} is the phase-plane 
    analysis of the second-order ODE arising from \eqref{const_q_pde} when $k=1$. This phase-plane 
    analysis allows one to quickly 
    show that all ODE solutions are, up to translations, uniquely characterized by their period and thus occur in a 
    one-parameter family. More refined arguments (see \cite[Appendix B]{MR3352243} and references therein) show that 
    the period length is actually a monotone function of the parameter. Using this, the authors of \cite{Schoen1989, MR3352243} 
    are able to conclude the crucial fact that the only minimizers of $Q_{h,1}$ are the constants for $k=1$. 
    However, for integers $k \geq 2$ the classification of solutions to the corresponding higher-order ODE is only known 
    for $k=2,3$ \cite{Frank2019, Andrade2022} and even in those cases the monotonicity of the period length is an open problem. 
    To overcome this difficulty, inspired by a remark in \cite[p. 1463]{F}, we succeed in adapting an argument due 
    to Beckner \cite[Theorem 4]{MR1230930}: see Lemma \ref{lemma Phi < Lambda gen m} and Step 3 in the proof 
    of Lemma \ref{lemma M gamma2 minimizers gen m}.

    \begin{remark}
    The examples we describe in Theorems \ref{AS3_examples} and \ref{AS4_examples} are not only 
    interesting for their novelty. Additionally, they could also provide important examples illustrating 
    the slow convergence of the geometric flow towards a constant $Q$-curvature metric. We plan to 
    address the convergence of this flow in a future paper. 
    \end{remark}

    We close this introduction with a brief outline of the rest of the paper. We 
    begin with some preliminaries in Section \ref{sec:preliminaries}, proving 
    that the total $Q$-curvature functional is regular in Section \ref{sec:funct_regularity}
    and listing some auxiliary lemmas from elsewhere in Section \ref{sec:auxiliary}.
    We prove Theorem \ref{quant_stability_thm} and Corollary \ref{quant_stability_metrics_thm}
    in Section \ref{sec:generic_stable}. In Section \ref{sec:cubic_stable} we prove 
    Theorem \ref{AS3_examples}, specifically discussing products with spheres in 
    Section \ref{sec:prod_spheres} and products with complex projective space in 
    Section \ref{sec:prod_CP}. Finally we prove Theorem \ref{AS4_examples} in 
    Section \ref{sec:quartic_stable}. We include Juhl's general recursion formulas for the GJMS 
    operators in Appendix \ref{GJMSformulas} for the reader's reference, even though 
    we do not use them in the main text. 

    \numberwithin{equation}{section} 
    \numberwithin{theorem}{section}

    \section{Notation}\label{sec:notation}
	Let us establish some standard terminology and definitions.
	In what follows, we will always be using Einstein's summation convention.
    Furthermore, we omit the subscript $g$, in the section the metric is fixed.
	
	\begin{itemize}
        \item $\mathbb{N}=\{1,2,3,\cdots\}$ and $\mathbb N_0=\mathbb{N}\cup\{0\}$;
        \item $k\in \mathbb N$ and $n>2k$
        \item $(M,g)$ is a smooth closed $n$-dimensional Riemannian manifold;
		\item $\delta=g_{\mathbb R^n}$ denotes the standard Euclidean metric; 
		\item $ \overset{\circ}{g}=g_{\mathbb S^n}$ denotes the standard round metric;
        \item $\omega_n$ denotes the volume of the Euclidean $n$-sphere;
        \item ${\rm Met}^{k,\alpha}(M)$ denotes the space of $\mathcal{C}^{k,\alpha}$-metrics in $M$, when $\alpha=0$, we simply 
        denote ${\rm Met}^{k}(M)$;
        \item ${\rm Met}^{\infty}(M)=\cup_{j\in\mathbb N}{\rm Met}^{j}(M)$ denotes the space of  smooth metrics in $M$;
		\item  $(e_i)_{i=1}^n$ denotes a local coordinate frame;
		\item $\mathfrak{T}^r_s(M)$ denotes the set of $(r,s)$-type tensor over $M$  with $\mathfrak{T}^0_0(M)=\mathcal{C}^{\infty}(M)$;
		\item ${\rm Rm}_g\in\mathfrak{T}^4_0(M)$ (or ${\rm Rm}_g\in\mathfrak{T}^3_1(M)$) denotes the (or covariant) {Riemannian curvature tensor},
		\item $\Ric_g=\operatorname{tr}_g {\rm Rm}_g\in\mathfrak{T}^2_0(M)$, traced over the first and last indices, {\it i.e.} 
        $(\Ric_g)_{ij} = g^{k\ell} \rm{Rm}_{kij\ell}$; 
		\item   $R_g={\rm tr}_g{\rm Ric}_g\in\mathfrak{T}^0_0(M)$ denotes the {scalar curvature} given by $R_g=g^{ij}{\rm Ric}_{ij}$;
		\item  $\Delta_{g}=g^{i j} \nabla_{i} \nabla_{j}$ denotes the Laplace--Beltrami operator;
		\item $\delta_g={\rm div}_g$ denotes the metric divergence;
		\item $\nabla_g$ denotes the Levi--Civita connection;
		\item ${\rm tr}_g:\mathfrak{T}^r_s(M)\rightarrow\mathfrak{T}^{r-2}_s(M)$ denotes a trace operator;
		\item $a_1 \lesssim a_2$ if $a_1 \leqslant C a_2$, $a_1 \gtrsim a_2$ if $a_1 \geqslant C a_2$, and $a_1 \simeq a_2$ 
        if $a_1 \lesssim a_2$ and $a_1 \gtrsim a_2$;
		\item $u=\mathcal{O}(f)$ as $x\rightarrow x_0$ for $x_0\in\mathbb{R}\cup\{\pm\infty\}$, if $\limsup_{x\rightarrow x_0}(u/f)(x)
        <\infty$ is the Big-O notation;
		\item $u=\mathrm{o}(f)$ as $x\rightarrow x_0$ for $x_0\in\mathbb{R}\cup\{\pm\infty\}$, if $\lim_{x\rightarrow x_0}(u/f)(x)=0$ 
        is the little-o notation;
		\item $u\simeq\widetilde{u}$, if $u=\mathcal{O}(\widetilde{u})$ and $\widetilde{u}=\mathcal{O}(u)$ as $x\rightarrow x_0$ 
        for $x_0\in\mathbb{R}\cup\{\pm\infty\}$; 
		\item $\mathcal{C}^{j,\alpha}(M)$, where $j\in\mathbb N$ and $\alpha\in (0,1)$, is the classical H\"{o}lder space over $M$;  we simply 
        denote $\mathcal{C}^{j}(M)$ when $\alpha=0$;
		\item $W^{j,q}_g(M)$ is the Sobolev space over $M$, where $j\in\mathbb N$ and $q\in[1,\infty]$; when $j=0$ we simply denote $L^{q}_g(M)$;
        \item $2^*_k=\frac{2n}{n-2k}$ is the critical exponent of the Sobolev embedding $W^{k,2}_g(M)\hookrightarrow L^{2_k^*}_g(M)$;
        \item $Q_{g,k}$ is the $2k$-th order $Q$-curvature of $g$;
         \item $P_{g,k}$ is the $2k$-th order GJMS operator of $g$;
        \item $\mathcal{Q}_{g,k}$ is the total $2k$-th order $Q$-curvature functional of $g$;
        \item $\mathcal{M}_{g,k}$ is the set of minimizers for $\mathcal{Q}_{g,k}$;
        % \item $\mathcal{CQC}_{g,k}$ is the set of constant $Q$-curvature metrics and $\mathcal{CQC}_{g,k}^*$ is its restriction to unit volume metrics.
        \item $G_{g,k,\xi}$ is the Green's function of $P_{g,k}$ with pole at $\xi$.
	\end{itemize}
    
    \section{Preliminaries} \label{sec:preliminaries}

    In this section we first establish the regularity of the total $Q$-curvature functional and 
    compute its first two derivatives. Then we list some auxiliary lemmas from other 
    papers which we will need. 

    \subsection{Regularity of the total $Q$-curvature functional}
    \label{sec:funct_regularity}

    We fix a background metric $g$ and recall that the 
    normalized total $Q$-curvature functional on the conformal 
    class $[g]$ is given by 
    $$
     \mathcal{Q}_{g,k}(u) =\frac{2}{n-2k} \frac{\int_M u P_{g,k} (u) \ud\mu_g}{\left ( 
    \int_M u^{\frac{2n}{n-2k}} \ud\mu_g \right )^{\frac{n-2k}{n}}} .
    $$
    Since $\mathcal{Q}_{g,k}(cu)=\mathcal{Q}_{g,k}(u)$ for any $c>0$, it will often be easier 
    to work with functions having $L^{\frac{2n}{n-2k}}$-norm equal to $1$. To that 
    end, we introduce the following Banach manifold
    $$\mathcal{B} = \left\{ u \in W^{k,2}_+ (M) : \int_M u^{\frac{2n}{n-2k}} \ud\mu_g 
    = 1 \right\}$$
    and observe that if $u \in \mathcal{B} \cap \mathcal{C}^\infty (M)$ then 
    $\widetilde g = u^{{4}/{(n-2k)}} g$ is a smooth metric in the conformal class 
    $[g]$ with unit volume. 
    \begin{lemma} 
    \label{lemma tangent space}
     Let $n,k\in\mathbb N$ with $n>2k$ and let $(M,g)$ be a smooth, closed, $n$-dimensional Riemannian manifold.
    The tangent space of $\mathcal{B}$ at $u$ is given by 
    $$T_u \mathcal{B} = \left \{ v \in W^{k,2}(M) : \int_M u^{\frac{n+2k}{n-2k}} v 
    \ud\mu_g = 0\right \}.$$
    Moreover, for each $v \in T_u \mathcal{B}$ the mappings 
    $$v \mapsto \pi_{T_u\mathcal{B}}: T_u\mathcal{B} \rightarrow \mathcal{L}
    (W^{k,2}(M), W^{k,2}(M))$$
    and 
    $$v \mapsto \pi_{T_u\mathcal{B}} : T_u\mathcal{B} \rightarrow \mathcal{L}
    (\mathcal{C}^{2k,\alpha}(M), \mathcal{C}^{2k,\alpha}(M))$$
    are both continuous. 
	\end{lemma}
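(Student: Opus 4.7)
The plan is to realize $\mathcal{B}$ as the regular level set $F^{-1}(1)$ of a smooth (actually $\mathcal{C}^\infty$ by smoothness of $t \mapsto t^{2n/(n-2k)}$ on $(0,\infty)$) functional $F(u) = \int_M u^{2n/(n-2k)} \ud \mu_g$ on the open cone $W^{k,2}_+(M) \subset W^{k,2}(M)$, and then to read off both the tangent space and the projection formula from the standard implicit function theorem. The key analytic ingredient is that, by the critical Sobolev embedding $W^{k,2}(M) \hookrightarrow L^{2n/(n-2k)}(M)$, we have $u \in L^{2n/(n-2k)}(M)$ and consequently $u^{(n+2k)/(n-2k)} \in L^{2n/(n+2k)}(M)$, which is precisely the dual exponent of $2n/(n-2k)$. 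Hence the candidate derivative
\[
DF(u)(v) = \frac{2n}{n-2k}\int_M u^{(n+2k)/(n-2k)} v\, \ud\mu_g
\]
defines a bounded linear functional on $W^{k,2}(M)$ by H\"older's inequality followed by Sobolev.

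Next I would verify $C^1$-regularity of $F$ by a standard Taylor expansion combined with the continuity of the Nemytskii operator $u \mapsto u^{(n+2k)/(n-2k)}$ from $L^{2n/(n-2k)}(M)$ into $L^{2n/(n+2k)}(M)$. The map $DF$ is surjective onto $\R$ at each $u \in \mathcal{B}$: testing against $v = u$ yields $DF(u)(u) = \frac{2n}{n-2k}\int_M u^{2n/(n-2k)}\,\ud\mu_g = \frac{2n}{n-2k}\neq 0$. Hence $\mathcal{B}$ is a $\mathcal{C}^1$ Banach submanifold of codimension one, and the implicit function theorem identifies
\[
T_u\mathcal{B}=\ker DF(u)=\Bigl\{v\in W^{k,2}(M):\int_M u^{(n+2k)/(n-2k)}v\,\ud\mu_g=0\Bigr\},
\]
which is the desired formula.

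For the projection statement I would exploit the fact that $\R u$ is a topological complement of $T_u\mathcal{B}$ in $W^{k,2}(M)$ (since $DF(u)(u)\neq 0$), and write explicitly
\[
\pi_{T_u\mathcal{B}}(w)\;=\;w\;-\;\frac{DF(u)(w)}{DF(u)(u)}\,u\;=\;w\;-\;\Bigl(\int_M u^{(n+2k)/(n-2k)}w\,\ud\mu_g\Bigr)\,u
\]
for $u \in \mathcal{B}$. Continuity of $u \mapsto \pi_{T_u\mathcal{B}}$ as a map into $\mathcal{L}(W^{k,2}(M),W^{k,2}(M))$ then reduces to two ingredients: (a) the Nemytskii operator $u \mapsto u^{(n+2k)/(n-2k)}$ is continuous from $W^{k,2}(M)$ to $L^{2n/(n+2k)}(M)$, and (b) the bilinear pairing $L^{2n/(n+2k)}(M)\times L^{2n/(n-2k)}(M)\to \R$ is continuous. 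Combining (a) and (b) with the continuity of the embedding $W^{k,2}(M)\hookrightarrow L^{2n/(n-2k)}(M)$ yields the claim. The $\mathcal{C}^{2k,\alpha}$ version is handled identically, replacing the critical Sobolev embedding by $\mathcal{C}^{2k,\alpha}(M)\hookrightarrow \mathcal{C}^0(M)$ and using that $t\mapsto t^{(n+2k)/(n-2k)}$ is smooth on compact subsets of $(0,\infty)$ (so the corresponding Nemytskii operator is continuous between H\"older spaces on any bounded open set of $\mathcal{C}^{2k,\alpha}(M)$ staying away from zero).

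The only delicate point is the continuity of the Nemytskii operator at the critical exponent, since the exponent $(n+2k)/(n-2k)>1$ may be non-integer. I expect this to be the main technical obstacle, but it is classical: it reduces to continuity of $t\mapsto t^{(n+2k)/(n-2k)}$ as a power map between $L^p$ spaces with matching Sobolev exponents, which holds by a standard density/dominated-convergence argument once one observes the exponent relation $\frac{n+2k}{n-2k}\cdot\frac{2n}{n+2k}=\frac{2n}{n-2k}$. Everything else is routine bookkeeping with H\"older's inequality.
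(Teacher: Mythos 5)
Your proposal is correct and follows essentially the same route as the paper: the paper simply differentiates the volume constraint along $u+\varepsilon v$ (after a density reduction) to identify $T_u\mathcal{B}$, and refers to the end of the proof of \cite[Lemma~2.1]{ENS} for the continuity of $\pi_{T_u\mathcal{B}}$, which is precisely the explicit rank-one projection formula and the Nemytskii/H\"older continuity argument you spell out. Two cosmetic caveats: $W^{k,2}_+(M)$ is not open in $W^{k,2}(M)$ when $2k<n$, so the level-set framing should be applied to $F(u)=\int_M|u|^{\frac{2n}{n-2k}}\ud\mu_g$ on all of $W^{k,2}(M)$, and the parenthetical claim that $F$ is $\mathcal{C}^\infty$ overstates its regularity for non-integer critical exponents --- but only the $\mathcal{C}^1$ regularity you actually verify is needed.
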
 

    \begin{proof} 
    By density, it suffices to take $v \in \mathcal{C}^\infty(M)$. 
    Indeed, $v \in T_u\mathcal{B}$ precisely when 
    $$0 = \left. \frac{\ud}{\ud\varepsilon} \right |_{\varepsilon = 0} 
    \int_M (u+\varepsilon v)^{\frac{2n}{n-2k}} \ud\mu_g = 
    \frac{2n}{n-2k} \int_M u^{\frac{n+2k}{n-2k}} v \ud\mu_g.$$
    The proof of continuity of $\pi_{\mathcal{B}}$ is the same as the one 
    at the end of the proof of \cite[Lemma 2.1]{ENS}. 
    \end{proof}
    
    \begin{lemma} \label{lem:functional_regularity}
     Let $n,k\in\mathbb N$ with $n>2k$ and let $(M,g)$ be a smooth, closed, $n$-dimensional Riemannian manifold.
    The mapping $u \mapsto \mathcal{Q}_{g,k}(u)$ is $\mathcal{C}^2$. If 
    $u \in  \mathcal{B}$ and $v,w \in T_u\mathcal{B}$ 
    then 
    \begin{eqnarray} \label{proj_derivatives} 
    D\mathcal{Q}_{g,k}(u)(v) & = & \frac{2}{n-2k} \int_M 
    %\widetilde{c}_{n,k} u^{\frac{n+2k}{n-2k}} \pi_{\mathcal{B}}(v) + 
    [u P_{g,k}(v) + v P_{g,k} (u)] \ud\mu_g \\ \nonumber 
    D^2 \mathcal{Q}_{g,k}(u) (v,w) & = & \frac{2}{n-2k} \int_M [v P_{g,k} (w) + w P_{g,k}(v)]
    \ud\mu_g - 2 \left ( \frac{n+2k}{n-2k} \right ) \mathcal{Q}_{g,k} (u) \int_M 
    u^{\frac{4k}{n-2k}} vw \ud\mu_g .
    \end{eqnarray} 
    \end{lemma}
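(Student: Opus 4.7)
The plan is to treat the numerator $N(u) := \int_M u P_{g,k}(u)\ud\mu_g$ and the denominator $F(u) := \bigl(\int_M u^{2_k^*}\ud\mu_g\bigr)^{2/2_k^*}$ of $\mathcal{Q}_{g,k}$ separately, and then combine them via the quotient rule. The numerator is a continuous symmetric bilinear form on $W^{k,2}(M)$, since $P_{g,k}$ is a bounded self-adjoint operator $W^{k,2}(M)\to W^{-k,2}(M)$. Hence $N$ is $\mathcal{C}^\infty$, and self-adjointness gives directly
\[
DN(u)(v) = \int_M [uP_{g,k}(v) + vP_{g,k}(u)] \ud\mu_g, \qquad D^2N(u)(v,w) = \int_M [vP_{g,k}(w) + wP_{g,k}(v)] \ud\mu_g.
\]

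The main technical step is the $\mathcal{C}^2$-regularity of $F = G^{2/2_k^*}$, where $G(u) = \int_M u^{2_k^*}\ud\mu_g$. The Sobolev embedding $W^{k,2}(M)\hookrightarrow L^{2_k^*}(M)$ yields continuity. To obtain its first two derivatives I would work with the pointwise expansion
\[
(u+v)^{2_k^*} = u^{2_k^*} + 2_k^* u^{2_k^*-1}v + \tfrac{2_k^*(2_k^*-1)}{2} u^{2_k^*-2} v^2 + R(u,v),
\]
bound $|R(u,v)|$ pointwise by $|v|^{2_k^*}$ plus lower-order terms handled by H\"older's inequality, and then show by dominated convergence and the Sobolev embedding that $\int_M R(u,v)\ud\mu_g = \mathrm{o}(\|v\|^2_{W^{k,2}})$. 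This produces
\[
DG(u)(v) = 2_k^*\int_M u^{2_k^*-1} v \ud\mu_g, \qquad D^2G(u)(v,w) = 2_k^*(2_k^*-1)\int_M u^{2_k^*-2} vw \ud\mu_g,
\]
and the chain rule gives analogous formulas for $F$. The hard part is precisely this critical-exponent remainder estimate, since no Sobolev margin is available; however, the argument mirrors \cite[Lemma 2.1]{ENS} in the case $k=1$, with only notational adjustments.

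Once regularity is in place, the identities in \eqref{proj_derivatives} are produced by the quotient rule together with Lemma \ref{lemma tangent space}. At $u\in\mathcal{B}$ one has $G(u)=F(u)=1$; moreover for $v\in T_u\mathcal{B}$, Lemma \ref{lemma tangent space} gives $DG(u)(v) = 2_k^*\int_M u^{(n+2k)/(n-2k)}v\,\ud\mu_g = 0$, so $DF(u)(v) = 0$. Thus $D\mathcal{Q}_{g,k}(u)(v) = \tfrac{2}{n-2k}DN(u)(v)$, which is the first formula. For the Hessian, the cross terms involving $DF$ vanish on $T_u\mathcal{B}$, leaving
\[
D^2\mathcal{Q}_{g,k}(u)(v,w) = \tfrac{2}{n-2k}\bigl[D^2N(u)(v,w) - N(u)\, D^2F(u)(v,w)\bigr].
\]
A direct chain-rule computation (again using that $DG(u)$ annihilates $T_u\mathcal{B}$) shows $D^2F(u)(v,w) = 2(2_k^*-1)\int_M u^{2_k^*-2} vw\ud\mu_g$. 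Inserting the identities $2_k^*-2 = 4k/(n-2k)$, $2_k^*-1 = (n+2k)/(n-2k)$, and the relation $N(u) = \tfrac{n-2k}{2}\mathcal{Q}_{g,k}(u)$ valid at $u\in\mathcal{B}$, will recover the second formula in \eqref{proj_derivatives}.
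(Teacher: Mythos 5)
Your proposal is correct and follows essentially the same route as the paper: the paper expands $\mathcal{Q}_{g,k}(u+\varepsilon v)$ by multiplying out the expansions of numerator and denominator and then imposes the constraints \eqref{unit_vol_constraint}, which is the same computation you organize via the quotient and chain rules, using that $DF$ vanishes on $T_u\mathcal{B}$. Your explicit attention to the critical-exponent remainder (bounding it by $\mathrm{o}(\|v\|_{W^{k,2}}^2)$ via the pointwise Taylor estimate, H\"older and Sobolev) is if anything slightly more careful than the paper's $\mathcal{O}(\varepsilon^3)$ bookkeeping, and the final identities agree after substituting $2_k^*-1=\frac{n+2k}{n-2k}$, $2_k^*-2=\frac{4k}{n-2k}$ and $N(u)=\frac{n-2k}{2}\mathcal{Q}_{g,k}(u)$.
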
 

    \begin{proof} 
    We begin with the expansion 
    \begin{eqnarray} \label{vol_integral_expansion} 
    \left ( \int_M (u + \varepsilon v)^{\frac{2n}{n-2k}} \ud\mu_g \right )^{\frac{2k-n}{n}} & = & 
    \left ( \int_M u^{\frac{2n}{n-2k}} \ud\mu_g \right )^{\frac{2k-n}{n}} \\ \nonumber 
    && -2\varepsilon \left( 
    \int_M u^{\frac{2n}{n-2k}} \ud\mu_g \right )^{\frac{2k-2n}{n}} \int_M u^{\frac{n+2k}{n-2k}} v \ud\mu_g 
    \\ \nonumber 
    && + \varepsilon^2\left ( \frac{2n-2k}{n} \right )  
    \left ( \int_M u^{\frac{2n}{n-2k}} \ud\mu_g \right )^{\frac{2k-3n}{n}} 
    \left ( \int_M u^{\frac{n+2k}{n-2k}} v \ud\mu_g \right )^2 \\ \nonumber  
    && -\varepsilon^2 \left( \frac{n+2k}{n-2k} \right ) \left ( \int_M
    u^{\frac{2n}{n-2k}} \ud\mu_g \right )^{\frac{2k-2n}{n}} \int_M u^{\frac{4k}{n-2k}} v^2 \ud\mu_g + 
    \mathcal{O}(\varepsilon^3) .  
    \end{eqnarray} 

    Combining this expansion with 
    $$\int_M (u + \varepsilon v) P_{g,k}(u+ \varepsilon v) \ud\mu_g = \int_M u P_{g,k} (u) \ud\mu_g 
    + \varepsilon \int_M [u P_{g,k} (v) + v P_{g,k}(u)] \ud\mu_g + \varepsilon^2 \int_M
    v P_{g,k}(v) \ud\mu_g, $$
    we obtain 
    \begin{eqnarray*} 
    \mathcal{Q}_{g,k} (u+\varepsilon v)& = & \frac{2}{n-2k} \left ( \int_M (u+\varepsilon v)^{\frac{2n}{n-2k}} \ud\mu_g
    \right )^{\frac{2k-n}{n}} \int_M (u+\varepsilon v) P_{g,k}(u+\varepsilon v) \ud\mu_g \\ 
    & = & \mathcal{Q}_{g,k}(u) + \frac{2\varepsilon}{n-2k} \left ( \int_M u^{\frac{2n}{n-2k}} \ud\mu_g \right )^{\frac{2k-n}{n}}
    \int_M [v P_{g,k} (u) + u P_{g,k}(v)]\ud\mu_g \\ 
    && - \frac{4\varepsilon}{n-2k} 
    \left ( \int_M u^{\frac{2n}{n-2k}} \ud\mu_g \right )^{\frac{2k-2n}{n}}
    \int_M u^{\frac{n+2k}{n-2k}} v \ud\mu_g \int_M u P_{g,k}(u) \ud\mu_g \\ 
    & & - \frac{2\varepsilon^2}{n-2k} \left ( \frac{n+2k}{n-2k}\right ) 
    \left ( \int_M u^{\frac{2n}{n-2k}} \ud\mu_g
    \right )^{\frac{2k-2n}{n}} \int_M u^{\frac{4k}{n-2k}} v^2 \ud\mu_g \int_M u P_{g,k} (u) \ud\mu_g \\ 
    && + \frac{2\varepsilon^2(2n-2k)}{n-2k} \left ( \int_M u^{\frac{2n}{n-2k}} \ud\mu_g \right )^{\frac{2k-3n}{n}} \left (
    \int_M u^{\frac{n+2k}{n-2k}} v \ud\mu_g \right )^2 \int_M u P_{g,k} (u) \ud\mu_g \\ 
    && + \frac{2\varepsilon^2}{n-2k} \left ( \int_M u^{\frac{2k-n}{n}} \ud\mu_g \right )^{\frac{2k-n}{n}} 
    \int_M v P_{g,k} (v) \ud\mu_g \\ 
    && - \frac{4 \varepsilon^2}{n-2k} \left ( \int_M u^{\frac{2n}{n-2k}} 
    \ud\mu_g\right )^{\frac{2k-2n}{n}}
    \int_M u^{\frac{n+2k}{n-2k}} v \ud\mu_g \int_M [v P_{g,k} (u) + u P_{g,k} (v)]\ud\mu_g + \mathcal{O}(\varepsilon^3) \\ 
    & = & \mathcal{Q}_{g,k} (u) + \varepsilon D\mathcal{Q}_{g,k}(u)(v) + \frac{1}{2} 
    \varepsilon^2 D^2\mathcal{Q}_{g,k}(u)(v,v) + \mathcal{O}(\varepsilon^3), 
    \end{eqnarray*} 
    which implies $\mathcal{Q}_{g,k}$ is a $\mathcal{C}^2$ functional. 

    We can read off from this last expansion that 
    \begin{eqnarray} \label{full_first_derivative}
    D\mathcal{Q}_{g,k}(u) (v) & = & \frac{2}{n-2k} \left ( \int_M u^{\frac{2n}{n-2k}} 
    \ud\mu_g \right )^{\frac{2k-n}{n}} \left ( \int_M \left[v P_{g,k} (u) + u P_{g,k}(v) \right]\ud\mu_g \right ) \\ \nonumber 
    && - \frac{4}{n-2k}
    \left (\int_M u^{\frac{2n}{n-2k}} \ud\mu_g \right )^{\frac{2k-2n}{n}} \int_M u^{\frac{n+2k}{n-2k}} v 
    \ud\mu_g \int_M u P_{g,k} (u) \ud\mu_g \end{eqnarray} 
    and 
    \begin{eqnarray} \label{full_second_derivative} 
    &&\frac{n-2k}{2} D^2 \mathcal{Q}_{g,k} (u)(v,w)\\
    \nonumber
    & = & \int_M [v P_{g,k} (w) + w P_{g,k}(v)] \ud\mu_g \left ( 
    \int_M u^{\frac{2n}{n-2k}} \ud\mu_g \right )^{\frac{2k-n}{n}} \\ \nonumber 
    && - 2 \left ( \int_M u^{\frac{2n}{n-2k}} 
    \ud\mu_g \right )^{\frac{2k-2n}{n}} \\ \nonumber 
    && \quad \times \left ( \int_M u^{\frac{n+2k}{n-2k}} v\ud \mu_g \int_M [w P_{g,k}(u) + u P_{g,k}(w)] \ud\mu_g 
    + \int_M u^{\frac{n+2k}{n-2k}} w \ud\mu_g \int_M [vP_{g,k} (u) + u P_{g,k}(v)] \ud\mu_g \right ) \\ \nonumber 
    && - 2\left ( \frac{n+2k}{n-2k} \right ) \left ( \int_M u^{\frac{2n}{n-2k}} \ud\mu_g \right )^{\frac{2k-2n}{n}}
    \int_M u^{\frac{4k}{n-2k}} vw \ud\mu_g \int_M u P_{g,k} (u) \ud\mu_g  \\ \nonumber 
    && + \left ( \frac{2n-2k}{n} \right ) \left ( \int_M u^{\frac{2n}{n-2k}} \ud\mu_g \right )^{\frac{2k-3n}{n}} 
    \int_M u P_{g,k}(u) \ud\mu_g  \\ \nonumber 
    && \quad \times \left ( \left ( \int_M u^{\frac{n+2k}{n-2k}} (v+w) \ud\mu_g \right)^2 - 
    \left ( \int_M u^{\frac{n+2k}{n-2k}} v \ud\mu_g \right )^2 - \left ( \int_M u^{\frac{n+2k}{n-2k}} w 
    \ud\mu_g \right )^2 \right ) . 
    \end{eqnarray} 
    In this last expression, we have used the polarization identity 
    $$D^2 \mathcal{Q}_{g,k}(u) (v,w) = \frac{1}{2} \left ( D^2 \mathcal{Q}_{g,k} (u) 
    (v+w, v+w) - D^2 \mathcal{Q}_{g,k} (u)(v,v) - D^2 \mathcal{Q}_{g,k} (u)(w,w) \right ).$$
    
    Restricting to the case $u \in \mathcal{B}$ and $v,w \in T_u \mathcal{B}$, 
    we impose the constraints 
    \begin{equation} \label{unit_vol_constraint}
    \int_M u^{\frac{2n}{n-2k}} \ud\mu_g = 1 \quad {\rm and} \quad \int_M u^{\frac{n+2k}{n-2k}} v \ud\mu_g = 0 
    = \int_M u^{\frac{n+2k}{n-2k}} w \ud\mu_g,
    \end{equation} 
    we see that for $u \in \mathcal{B}$ and $v,w \in T_u\mathcal{B}$ the 
    expressions \eqref{full_first_derivative} and \eqref{full_second_derivative}
    reduce to 
    $$ D\mathcal{Q}_{g,k}(u)(v) = \frac{2}{n-2k} \int_M \left[v P_{g,k} (u) + u P_{g,k} (v)\right]
    \ud\mu_g %\qquad \textcolor{red}{\text{for all } v \in T_u \mathcal B}
    $$
    and 
    \begin{align*}
        D^2 \mathcal{Q}_{g,k}(u)(v,v) &= \frac{2}{n-2k} \int_M \left[v P_{g,k} (w) + w P_{g,k}(v)\right]
    \ud\mu_g \\
    & \quad - \frac{4}{n-2k} \left ( \frac{n+2k}{n-2k} \right ) \int_M u^{\frac{4k}{n-2k}} 
    vw \ud\mu_g  \int_M u P_{g,k} (u) \ud\mu_g.
    \end{align*}
    The first of these formulas is exactly the first derivative as listed in \eqref{proj_derivatives}. 
    We obtain the listed formula for the second derivative after using the identity 
    $$\mathcal{Q}_{g,k} (u) = \frac{2}{n-2k} \int_M u P_{g,k}(u) \ud\mu_g.$$
    \end{proof} 

    We obtain modulus of continuity estimates from the structure of the linearized 
    operator. For a given $u \in \mathcal{B}$ we consider the linearization of
    \eqref{const_q_pde} about $u$, which is the operator
    $$L_u = P_{g,k} - \left ( \frac{n+2k}{n-2k}\right ) \mathcal{Q}_{g,k}(u) 
    u^{\frac{4k}{n-2k}}.$$

    \begin{lemma} 
    Let $n,k\in\mathbb N$ with $n>2k$ and let $(M,g)$ be a smooth, closed, $n$-dimensional Riemannian manifold.
    The mappings 
    $$u \mapsto \frac{D^2(\mathcal{Q}_{g,k})(u)(v,w)}{\| v \|_{W^{k,2}(M)}
    \| w \|_{W^{k,2}(M)}}$$
    and 
    $$u \mapsto \frac{D^2(\mathcal{Q}_{g,k})(u)(v,\cdot)}
    {\| v\|_{\mathcal{C}^{2k,\alpha}(M)}}$$
    are both continuous with moduli of continuity that are uniformly 
    bounded with respect to $v$ and $w$. 
    \end{lemma}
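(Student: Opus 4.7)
The plan is to read the dependence on $u$ directly from the expansion \eqref{full_second_derivative} and estimate each factor separately. For $u_1, u_2 \in W^{k,2}_+(M)$ in a bounded neighborhood of a fixed $u_0\in\mathcal{B}$, I will establish
\[
\bigl|D^2\mathcal{Q}_{g,k}(u_1)(v,w)-D^2\mathcal{Q}_{g,k}(u_2)(v,w)\bigr|\lesssim \omega\bigl(\|u_1-u_2\|_{W^{k,2}}\bigr)\,\|v\|_{W^{k,2}}\|w\|_{W^{k,2}}
\]
for some modulus $\omega$ with $\omega(t)\to 0$ as $t\to 0^+$ that is independent of $v$ and $w$. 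This yields the uniformly bounded modulus of continuity in the statement.

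Every summand in \eqref{full_second_derivative} factors as $C(u)\cdot B(u,v,w)$, where $C(u)$ is a product of scalar factors of the shape $\|u\|_{L^{2_k^*}}^{r}$ or $\int_M u P_{g,k}(u)\,\ud\mu_g$. Both are continuous on $W^{k,2}(M)$: the first by the Sobolev embedding $W^{k,2}\hookrightarrow L^{2_k^*}$, the second by boundedness of $P_{g,k}\colon W^{k,2}(M)\to W^{-k,2}(M)$. The bilinear factor $B(u,v,w)$ is of one of three types: (i) $\int_M[vP_{g,k}(w)+wP_{g,k}(v)]\,\ud\mu_g$, which carries no $u$-dependence and is controlled by $\|v\|_{W^{k,2}}\|w\|_{W^{k,2}}$ via self-adjointness and boundedness of $P_{g,k}$; (ii) $\int_M u^{4k/(n-2k)}vw\,\ud\mu_g$, estimated by Hölder with exponents $\bigl(\tfrac{n}{2k},2_k^*,2_k^*\bigr)$; or (iii) products of two factors $\int_M u^{(n+2k)/(n-2k)}v\,\ud\mu_g$, estimated by Hölder with exponents $\bigl((2_k^*)',2_k^*\bigr)$. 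In all three cases the $v,w$-dependence separates cleanly as $\|v\|_{W^{k,2}}\|w\|_{W^{k,2}}$, leaving only a Lebesgue norm of a power of $u$.

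The technical point is therefore reduced to continuity of the Nemytskii maps $u\mapsto u^{4k/(n-2k)}$ from $L^{2_k^*}(M)$ into $L^{n/(2k)}(M)$ and $u\mapsto u^{(n+2k)/(n-2k)}$ from $L^{2_k^*}(M)$ into $L^{(2_k^*)'}(M)$. These are standard and follow from dominated convergence together with the elementary inequalities $|a^s-b^s|\le s\bigl(a^{s-1}+b^{s-1}\bigr)|a-b|$ for $s\ge 1$ and $|a^s-b^s|\le |a-b|^s$ for $0<s<1$. The main minor obstacle is that the exponent $4k/(n-2k)$ can be smaller than $1$ when $k\ge 2$ and $n$ is large, forcing the use of the second (sublinear) inequality; however continuity of the corresponding Nemytskii map is still automatic, only losing Lipschitz character.

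Finally, since the full estimate factors as a $u$-dependent scalar times $\|v\|_{W^{k,2}}\|w\|_{W^{k,2}}$, the modulus $\omega$ is automatically uniform in $v$ and $w$, proving the first claim. The $\mathcal{C}^{2k,\alpha}$ version is strictly easier: the test function $v\in\mathcal{C}^{2k,\alpha}(M)$ may be pulled out in $L^{\infty}$-norm from every integrand, so every bilinear pairing in $v$ reduces to a linear estimate in the remaining factor, which is handled by the same Hölder inequalities and Nemytskii continuity arguments as above.
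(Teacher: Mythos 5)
Your argument is correct in substance but follows a genuinely different route from the paper. The paper's proof works with the linearized operator $L_u = P_{g,k} - \tfrac{n+2k}{n-2k}\mathcal{Q}_{g,k}(u)\,u^{4k/(n-2k)}$ and applies the fundamental theorem of calculus along the segment from $u_0$ to $u_1$, producing a Lipschitz-type bound $\|(L_{u_1}-L_{u_0})(v)\|\lesssim\|u_1-u_0\|_{W^{k,2}(M)}\|v\|_{W^{k,2}(M)}$ and reading off the conclusion from the restricted formula in \eqref{proj_derivatives}. You instead estimate the difference of the two Hessians directly from the full expansion \eqref{full_second_derivative}, separating each summand into a scalar factor in $u$ and a bilinear factor in $(v,w)$, and reducing everything to H\"older inequalities plus continuity of the Nemytskii maps $u\mapsto u^{4k/(n-2k)}$ and $u\mapsto u^{(n+2k)/(n-2k)}$ between the appropriate Lebesgue spaces. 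What your approach buys is robustness in the regime $4k/(n-2k)<1$ (i.e. $n>6k$): there the power function is not differentiable at $0$ and the paper's segment-differentiation produces the exponent $\tfrac{6k-n}{n-2k}<0$, whereas your sublinear inequality $|a^s-b^s|\le|a-b|^s$ handles it cleanly, at the price of a H\"older rather than Lipschitz modulus, which is all the lemma requires; the paper's route, when it applies, gives the sharper Lipschitz estimate with less bookkeeping. Two small points you should patch: your three-type classification of the bilinear factors omits the cross terms in \eqref{full_second_derivative} of the form $\int_M u^{\frac{n+2k}{n-2k}}v\,\ud\mu_g\int_M[wP_{g,k}(u)+uP_{g,k}(w)]\,\ud\mu_g$ (they do not vanish off $T_u\mathcal{B}$, but they are handled by exactly the same H\"older and continuity estimates, the second factor being bilinear in $(u,w)$ and Lipschitz in $u$); and the negative powers of $\int_M u^{2^*_k}\ud\mu_g$ require $\|u\|_{L^{2^*_k}(M)}$ to stay bounded away from zero, which you should state explicitly, though it is automatic in a small $W^{k,2}$-neighborhood of a point of $\mathcal{B}$ by the Sobolev embedding. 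Your treatment of the $\mathcal{C}^{2k,\alpha}$ variant is brief, but no more so than the paper's own.
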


    \begin{proof} 
    Let $u_0, u_1 \in \mathcal{B}$. Using the fundamental theorem of 
    calculus we see 
    \begin{eqnarray*} 
    L_{u_1}(v) - L_{u_0}(v) & = & - \frac{n+2k}{n-2k} \int_0^1
    D(\mathcal{Q}_g^2)(tu_1+(1-t)) (u_1 - u_0) ((1-t)u_0 + tu_1)^{\frac{4k}{n-2k}} v \ud t \\ 
    && - \frac{4k(n+2k)}{(n-2k)^2} \int_0^1 \mathcal{Q}_{g,k}((1-t)u_0+tu_1)
    ((1-t)u_0+tu_1)^{\frac{6k-n}{n-2k}} (u_1-u_0) v \ud t,
    \end{eqnarray*} 
    which we can in turn integrate to obtain the following estimate 
    $$\| (L_{u_1} - L_{u_0})(v)\|_{W^{k,2}(M)} \lesssim \| u_1 - u_0 \|_{W^{k,2}(M)} \| v \|_{W^{k,2}(M)},$$
    uniformly on $u$ and $v$. It follows from 
    the second formula in \eqref{proj_derivatives} that 
    $$u \mapsto \frac{D^2(\mathcal{Q}_{g,k})(u) (v,w)}{\| v \|_{W^{k,2}(M)} 
    \| w \|_{W^{k,2}(M)}}$$
    is a continuous map whose modulus of continuity is uniform over $v,w \in W^{k,2}(M)$. 
    One can similarly show 
    $$u \mapsto \frac{D^2(\mathcal{Q}_{g,k})(u)(v,\cdot)}{\|v\|_{\mathcal{C}^{2k,\alpha}(M)}} 
    \in \mathcal{C}^{2k,\alpha}(M)$$
    is also a continuous map. \end{proof}

    \subsection{Some auxiliary results} \label{sec:auxiliary} 

    In this section, we present some auxiliary results that will be used in the proofs of our main results.
    
    First, we establish the variational setting we will use for the 
    remainder of our analysis. This is a version of the Lyapunov-Schmidt 
    reduction, which appears in many applications. Heuristically, it divides a 
    variation of our functional into a component that changes the functional by 
    an amount we can estimate and an orthogonal component that lies in a finite-dimensional 
    space. 

    Let $(M,g)$ be a closed Riemannian manifold with $n>2k$ and $k\in\mathbb N$ such that $g \in {\rm Met}^3(M)$.
    Let $u \in \mathcal{M}_{g,k}^* = \mathcal{B} \cap \mathcal{M}_{g,k}$ correspond to a minimizing metric 
    in the conformal class $[g]$ with unit volume and let 
    $$K = \ker (D^2\mathcal{Q}_{g,k}(u) (\cdot, \cdot)) = \{ v \in W^{k,2}(M) \cap 
    T_u\mathcal{B} : D^2\mathcal{Q}_{g,k}(u)(v,v) = 0 \}. $$
    This second variation operator is elliptic, so $K$ is finite-dimensional (see, 
    for instance, \cite[Section 10.4]{Nic}). We let $\ell = \dim(K)$ and denote by $K^\perp$ the 
    orthogonal complement of $K$ inside $W^{k,2}(M)$ with respect to the $L^2$ 
    inner product. 

    One can find proof of the following lemma \cite[Appendix~A]{ENS}. 

     \begin{lemmaletter}\label{lm:lyapunov-schmidt}
        Let $n,k\in\mathbb N$ with $n>2k$ and let $(M,g)$ be a smooth, closed, $n$-dimensional Riemannian manifold.  Assume 
        that $u\in \mathcal{CQC}_{g,k}^*$. Then there 
        is an open neighborhood $U \subset K$ of 0 in $K$ and a map $F: U \rightarrow K^{\perp}$ with 
        $F(0)=0$ and $\nabla F(0)=0$ satisfying the following properties:
        \begin{itemize}
            \item[{\rm (i)}] Firstly 
            $$
          \mathcal L := \left \{  u+\varphi+F(\varphi) \, : \varphi \in U \right\}  \subset \mathcal{B}.
            $$
            \item [{\rm (ii)}] If we define $q: U \rightarrow \mathbb{R}$ by 
            $q(\varphi)=\mathcal{Q}_{g,k}(u+\varphi+F(\varphi))$ then we have
            $$
            \begin{aligned}
            \nabla_{\mathcal{B}} \mathcal Q_{k,g}(v+\varphi+F(\varphi)) & =\pi_K \nabla_{\mathcal{B}} \mathcal Q_{g,k}(v+\varphi+F(\varphi))=\nabla q(\varphi) .
            \end{aligned}
            $$
            Furthermore, $\varphi \mapsto q(\varphi)$ is real analytic.
            \item[{\rm (iii)}] There exists $\delta>0$ depending on $u$ such that for 
            any $\widetilde u  \in \mathcal{B}$ with $\| u- \widetilde u\|_{W^{k,2}(M)} \leq \delta$ 
            we have $\pi_K(u-\widetilde u) \in U$. Furthermore, 
            if $\widetilde u \in \mathcal{CQC}_{g,k}^* = \mathcal{CQC}_{g,k} \cap \mathcal{B}$ with 
            $\| u-\widetilde u\|_{W^{k,2}(M)} \leq \delta$ then
            $$
            \widetilde u=u+\pi_K(\widetilde u-u)+F\left(\pi_K(\widetilde u-u)\right) .
            $$
            \item[{\rm (iv)}] For all $\varphi \in U$ and $\eta \in K$, we have
            $$
            \|\nabla F(\varphi)[\eta]\|_{\mathcal C^{2k, \alpha}(M)} \lesssim 
            \|\eta\|_{\mathcal C^{0, \alpha}(M)} .
            $$
        \end{itemize}
    \end{lemmaletter}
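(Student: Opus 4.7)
The plan is to carry out a standard Lyapunov--Schmidt reduction via the implicit function theorem, adapted to the Banach manifold $\mathcal{B}$. By Lemma \ref{lemma tangent space}, $\mathcal{B}$ is a codimension-one submanifold of $W^{k,2}(M)$ with explicit tangent space $T_u\mathcal{B}$, and by construction $K \subset T_u\mathcal{B}$; let $K^{\perp}$ denote the $L^2$-orthogonal complement of $K$ inside $T_u\mathcal{B}$. The key structural fact is that the linearization at the critical point,
\[
L_u = P_{g,k} - \tfrac{n+2k}{n-2k}\,\mathcal{Q}_{g,k}(u)\, u^{\frac{4k}{n-2k}},
\]
is elliptic and self-adjoint with kernel exactly $K$ inside $T_u\mathcal{B}$, so that the constrained operator $\pi_{K^{\perp}} L_u \colon K^{\perp} \to K^{\perp}$ is an isomorphism.

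First I would fix a local retraction $r \colon \mathcal{U} \to \mathcal{B}$ onto $\mathcal{B}$ near $u$ (for instance $r(w) = w/\|w\|_{L^{2_k^*}(M)}$, well-defined for $w>0$ near $u$ since $\|u\|_{L^{2_k^*}}=1$), and define
\[
\Phi(\varphi,\psi) := \pi_{K^{\perp}}\nabla_{\mathcal{B}}\mathcal{Q}_{g,k}\bigl(r(u+\varphi+\psi)\bigr), \qquad (\varphi,\psi) \in K \times K^{\perp},
\]
for $(\varphi,\psi)$ in a neighborhood of $(0,0)$. Since $u \in \mathcal{CQC}_{g,k}^*$ is a critical point, $\Phi(0,0)=0$, and the partial derivative $D_\psi \Phi(0,0)=\pi_{K^{\perp}} L_u|_{K^{\perp}}$ is invertible by the observation above. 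The implicit function theorem then produces an open neighborhood $U\subset K$ of $0$ and a map $F\colon U\to K^{\perp}$ with $F(0)=0$ and $\Phi(\varphi,F(\varphi))\equiv 0$; moreover $\nabla F(0)=0$ because $D_\varphi \Phi(0,0)[\varphi]=\pi_{K^{\perp}} L_u \varphi = 0$ for every $\varphi\in K$.

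With $F$ in hand, property (i) is by construction. For (ii), the identity $\Phi(\varphi, F(\varphi))=0$ says that the tangential gradient of $\mathcal{Q}_{g,k}$ at $u+\varphi+F(\varphi)$ has no $K^{\perp}$-component, hence coincides with its $K$-projection; applying the chain rule to $q(\varphi)=\mathcal{Q}_{g,k}(u+\varphi+F(\varphi))$ and using that $\nabla F(\varphi)[\eta]\in K^{\perp}$ identifies $\nabla q(\varphi)$ with this common value. Real analyticity of $q$ follows from the real analyticity of $\mathcal{Q}_{g,k}$ on the open set $\{w>0\}\subset W^{k,2}(M)$ (the nonlinearity $w^{4k/(n-2k)}$ is real analytic for $w>0$) combined with the analytic implicit function theorem. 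Property (iii) is the local uniqueness clause: any nearby $\widetilde u \in \mathcal{CQC}_{g,k}^*$ automatically satisfies $\Phi(\pi_K(\widetilde u-u),\pi_{K^{\perp}}(\widetilde u-u))=0$, and uniqueness of the IFT solution forces the stated decomposition.

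The main obstacle is establishing (iv) together with the previous properties. The cleanest route is to re-run the implicit function theorem in the H\"older setting, using that $\pi_{K^{\perp}} L_u$ is a $2k$-th order elliptic operator on $K^{\perp}$ satisfying the Schauder estimate $\|\psi\|_{\mathcal{C}^{2k,\alpha}(M)} \lesssim \|\pi_{K^{\perp}} L_u \psi\|_{\mathcal{C}^{0,\alpha}(M)}$. Differentiating $\Phi(\varphi,F(\varphi))\equiv 0$ gives an explicit identity of the form $\pi_{K^{\perp}} L_{u+\varphi+F(\varphi)}(\nabla F(\varphi)[\eta]) = -\pi_{K^{\perp}} L_{u+\varphi+F(\varphi)}\eta + \text{l.o.t.}$, whose right-hand side depends continuously on $\eta\in \mathcal{C}^{0,\alpha}(M)$, producing the claimed bound. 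Finite-dimensionality of $K$ (whose elements are smooth eigenfunctions of $L_u$ by elliptic regularity) ensures that $\pi_K$ interacts well with every reasonable function space, so the Hilbert-space IFT and the H\"older-space IFT produce the same $F$, making the whole argument robust in the absence of a maximum principle for $P_{g,k}$.
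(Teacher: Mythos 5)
Your overall strategy---a Lyapunov--Schmidt reduction via the implicit function theorem around the linearized operator $L_u$, splitting off the finite-dimensional kernel $K$---is the same as in the source the paper cites for this lemma (Appendix~A of Engelstein--Neumayer--Spolaor), so the architecture is right. However, there is a genuine gap in your justification of the real analyticity of $q$ in (ii). You claim that $\mathcal{Q}_{g,k}$ is real analytic on the ``open set'' $\{w>0\}\subset W^{k,2}(M)$. This fails on two counts: since $n>2k$, $W^{k,2}(M)$ does not embed into $\mathcal{C}^0(M)$, so the set of a.e.\ positive functions is not open in $W^{k,2}(M)$; and the Nemytskii maps $w\mapsto w^{\frac{2n}{n-2k}}$ and $w\mapsto w^{\frac{n+2k}{n-2k}}$, with non-integer exponents, are not real analytic between the relevant Sobolev/Lebesgue spaces---the functional is only $\mathcal{C}^2$ there, which is exactly what Lemma \ref{lem:functional_regularity} establishes. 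Consequently you cannot invoke the analytic implicit function theorem in the $W^{k,2}$ setting to get analyticity of $q$. The reduction must instead be run in $\mathcal{C}^{2k,\alpha}(M)$, where $u\in\mathcal{CQC}_{g,k}^*$ is smooth and bounded away from zero by elliptic regularity for the critical GJMS equation, a $\mathcal{C}^{2k,\alpha}$-neighborhood of $u$ consists of positive functions bounded below, the power nonlinearities are genuinely analytic, and the smoothness and finite-dimensionality of $K$ then yield analyticity of $q$ on $U$.

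You partially anticipate this by proposing to ``re-run'' the IFT in the H\"older setting for (iv), but the assertion that the Hilbert-space IFT and the H\"older-space IFT ``produce the same $F$'' is precisely where the work lies, and the same two-space issue affects (iii): the uniqueness statement is phrased in terms of $W^{k,2}$-closeness, while the IFT uniqueness you can legitimately use (the analytic, H\"older-space one) lives in $\mathcal{C}^{2k,\alpha}$. Closing this requires Schauder/elliptic estimates for the Euler--Lagrange equation to upgrade $W^{k,2}$-closeness of elements of $\mathcal{CQC}_{g,k}^*$ to $\mathcal{C}^{2k,\alpha}$-closeness, so that nearby $W^{k,2}$ critical points are captured by the H\"older-space graph of $F$. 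These comparisons between the two topologies are the technical core of the cited ENS Appendix~A; as written, your sketch asserts them rather than proves them.
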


    \begin{proof}
        See \cite[Appendix~A]{ENS}
    \end{proof}

    Second, we need the following compactness result for minimizing sequences.

    \begin{lemmaletter}
    \label{lemma compactness minseq}
        Let $n,k\in\mathbb N$ with $n>2k$ and let $(M,g)$ be a smooth, closed, $n$-dimensional Riemannian manifold.
        If $g\in\mathfrak{A}_k$ is admissible and $\{u_m\}_{m\in\mathbb N}\subset \mathcal{B}$ is a sequence such 
        that $\lim_{m\rightarrow\infty}\mathcal Q_{g,k}(u_m) =\mathcal{Y}_{k,+}(M,[g])$, 
        then there exists $v \in \mathcal{M}_{g,k} \cap \mathcal{B}$ such 
        that $ \lim_{m\rightarrow\infty}\|u_m-v\|_{W^{k,2}(M)}=0$ up to a subsequence.
    \end{lemmaletter}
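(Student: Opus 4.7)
The plan is to follow the Aubin-type concentration-compactness strategy behind the existence proof of Mazumdar and V\'etois \cite{mazumdar2022existence}: extract a weak limit, rule out bubbling at the critical exponent using the full strength of admissibility, and then upgrade weak to strong $W^{k,2}$-convergence via norm-plus-weak.

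First I would establish $W^{k,2}$-boundedness of $\{u_m\}$. Since $g\in\mathfrak{A}_{k,\alpha}$ gives $\mathcal{Y}_{k,+}(M,[g])>0$, the quadratic form $u \mapsto \int_M u P_{g,k}(u)\ud\mu_g$ is coercive on $W^{k,2}(M)$ modulo lower-order terms. Combined with $\|u_m\|_{L^{2^*_k}(M)}=1$ and $\mathcal{Q}_{g,k}(u_m)\to \mathcal{Y}_{k,+}(M,[g])$, this forces $\int_M u_m P_{g,k}(u_m)\ud\mu_g$ to be bounded and therefore $\|u_m\|_{W^{k,2}(M)}$ as well. Passing to a subsequence we obtain $u_m\rightharpoonup v$ weakly in $W^{k,2}(M)$, strongly in $L^p(M)$ for every $p<2^*_k$, and pointwise almost everywhere, with $v\geq 0$ a.e.

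The central step is to exclude mass concentration in $L^{2^*_k}$. Applying a Brezis--Lieb/Struwe-type decomposition to $w_m:=u_m-v$ gives
\begin{equation*}
\int_M u_m P_{g,k}(u_m)\ud\mu_g = \int_M v P_{g,k}(v)\ud\mu_g + \int_M w_m P_{g,k}(w_m)\ud\mu_g + \mathrm{o}(1),
\end{equation*}
together with $1=\|v\|_{L^{2^*_k}(M)}^{2^*_k}+\|w_m\|_{L^{2^*_k}(M)}^{2^*_k}+\mathrm{o}(1)$. Bounding the residual part below by the Euclidean/round-sphere Sobolev--Paneitz constant $\mathcal{Y}_{k,+}(\mathbb{S}^n,\overset{\circ}{g})$ and the smooth part by $\mathcal{Y}_{k,+}(M,[g])$, then passing to the limit in $\mathcal{Q}_{g,k}(u_m)$ using the constraint above and the concavity of $t\mapsto t^{(n-2k)/n}$, one sees that $\|w_m\|_{L^{2^*_k}(M)}\to 0$ as soon as the strict inequality $\mathcal{Y}_{k,+}(M,[g])<\mathcal{Y}_{k,+}(\mathbb{S}^n,\overset{\circ}{g})$ holds. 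This is exactly where admissibility enters: the positivity of the Green's function together with the positivity of the mass $m_g(\xi)$ at some point $\xi\in M$ furnishes the required Aubin-type test function via the expansion \eqref{Green's_expansion}, as carried out in \cite{mazumdar2022existence} (and for the lower-order cases in \cite{Gursky_Malchiodi,HY}). This concentration-compactness step is the main obstacle of the proof.

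Once $u_m\to v$ strongly in $L^{2^*_k}(M)$, we have $\|v\|_{L^{2^*_k}(M)}=1$, so $v\in \mathcal{B}$. Weak lower semicontinuity yields $\mathcal{Q}_{g,k}(v)\leq \liminf_m \mathcal{Q}_{g,k}(u_m)=\mathcal{Y}_{k,+}(M,[g])$, hence $v\in\mathcal{M}_{g,k}$; strict positivity of $v$ a.e.\ follows from the regularity and Green's function positivity components of admissibility, again as in \cite{mazumdar2022existence}. To upgrade weak $W^{k,2}$-convergence to strong, note that strong $L^{2^*_k}$-convergence combined with the identity $\mathcal{Q}_{g,k}(u)=\tfrac{2}{n-2k}\int_M u P_{g,k}(u)\ud\mu_g$ on $\mathcal{B}$ forces $\int_M u_m P_{g,k}(u_m)\ud\mu_g\to \int_M v P_{g,k}(v)\ud\mu_g$. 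Thus the norms associated to the coercive quadratic form $u\mapsto\int_M u P_{g,k}(u)\ud\mu_g$ converge; together with weak convergence this gives strong convergence in the corresponding Hilbert-space norm, which is equivalent to $\|\cdot\|_{W^{k,2}(M)}$ under admissibility. This concludes the proof sketch.
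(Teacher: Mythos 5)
Your proposal is correct and is essentially the paper's own route: the paper simply invokes \cite[Theorem 3]{Mazumdar2016} (in the spirit of Lions' concentration--compactness, with \cite{Aub} for $k=1$ and \cite{HY} for $k=2$), and your sketch just unfolds that cited argument — coercivity from $\mathcal Y_{k,+}>0$, Brezis--Lieb splitting, exclusion of bubbling via the strict inequality below the spherical constant furnished by admissibility, and norm-plus-weak convergence to upgrade to strong $W^{k,2}$-convergence. No genuinely different idea is introduced, and the steps you defer to \cite{mazumdar2022existence} are exactly the ones the paper also defers.
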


    \begin{proof}
For $k = 1$, this is a classical result due to Aubin \cite{Aub}. For $k \geq 1$, the lemma follows 
from \cite[Theorem 3]{Mazumdar2016} and its proof is in the spirit of Lions' concentration-compactness 
\cite[Theorem 4.1]{Lions-limitII}. For the analogous statement in a dual formulation when $k =2$, see 
also \cite[Proposition 2.6]{HY}.
    \end{proof}

    We remark that the statement of \cite[Theorem 3]{Mazumdar2016} allows for singular limits, but the 
    additional hypotheses we've listed here preclude the development of singularities. 

   Third, we introduce the so-called finite-dimensional "distance" Łojasiewicz inequality.

    \begin{lemmaletter}
    \label{lemma lojasiewicz}
        Let $q: \mathbb{R}^{\ell} \rightarrow \mathbb{R}$ with $\ell\in\mathbb N$ be a 
        real analytic function and assume that $\nabla q(\varphi_0)=0$. 
        There exist $\tilde{\delta}>0$ and $\gamma \geq 0$, depending on $q$ and 
        $\varphi_0$ such that 
        $$
        |q(\varphi) - q(\varphi_0)| \gtrsim  \inf \{|\varphi-\bar{\varphi}|: 
        \bar{\varphi} \in B(\varphi_0, \tilde{\delta}) \; {\rm and} 
        \;
         q(\bar{\varphi})= q (\varphi_0)\}^{2+\gamma} \quad {\rm for \ all} 
        \quad \varphi \in B(\varphi_0, \tilde{\delta}).
        $$
    \end{lemmaletter}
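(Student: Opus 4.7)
The plan is to deduce this distance form of the Łojasiewicz inequality from its classical gradient form, by integrating along gradient-flow trajectories. This is a standard argument for real analytic functions; I outline the steps.

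First I invoke the classical Łojasiewicz gradient inequality: since $q$ is real analytic and $\varphi_0$ is a critical point, there exist $\theta \in (0, 1/2]$, a radius $\tilde\delta_1 > 0$, and a constant $C_1 > 0$ such that
\[
|\nabla q(\varphi)| \geq C_1 |q(\varphi) - q(\varphi_0)|^{1-\theta} \quad \text{for all } \varphi \in B(\varphi_0, \tilde\delta_1).
\]
The exponent $\gamma$ in the conclusion is then defined by $2 + \gamma := 1/\theta$, and the hypothesis $\theta \in (0, 1/2]$ forces $\gamma \geq 0$.

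Next, I consider the modified gradient flow $\dot\psi(t) = -\sgn(q(\psi(t)) - q(\varphi_0)) \nabla q(\psi(t))$ starting from $\psi(0) = \varphi$ for $\varphi \in B(\varphi_0, \tilde\delta)$, where $\tilde\delta \leq \tilde\delta_1$ will be chosen at the end. The auxiliary quantity $H(t) := |q(\psi(t)) - q(\varphi_0)|^\theta$ is monotonically decreasing along the trajectory, and a short computation combined with the gradient inequality yields
\[
-\dot H(t) = \theta |q(\psi(t)) - q(\varphi_0)|^{\theta - 1} |\nabla q(\psi(t))|^2 \geq C_1 \theta |\nabla q(\psi(t))| = C_1 \theta |\dot\psi(t)|.
\]
Integrating from $0$ to $T$ produces the arclength bound
\[
\int_0^T |\dot\psi(t)| \ud t \leq \frac{1}{C_1 \theta} |q(\varphi) - q(\varphi_0)|^\theta,
\]
so the trajectory has finite length and converges, as $t \to \infty$, to some point $\bar\varphi$ on the level set $\{q = q(\varphi_0)\}$.

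Finally, the triangle inequality gives
\[
|\varphi - \bar\varphi| \leq \int_0^\infty |\dot\psi(t)| \ud t \leq \frac{1}{C_1 \theta} |q(\varphi) - q(\varphi_0)|^\theta,
\]
and raising both sides to the power $2 + \gamma = 1/\theta$ yields the claimed inequality, after bounding the infimum over the level set by $|\varphi - \bar\varphi|$. The main technical point I expect to verify carefully is the confinement of the flow trajectory: I must choose $\tilde\delta$ small enough compared to $\tilde\delta_1$ so that the arclength bound itself guarantees the trajectory never exits $B(\varphi_0, \tilde\delta_1)$, and in particular so that the limit $\bar\varphi$ lies inside $B(\varphi_0, \tilde\delta)$. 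This is a self-referential smallness condition that is closed by shrinking $\tilde\delta$ appropriately, using the continuity of $q$ at $\varphi_0$ to make $|q(\varphi) - q(\varphi_0)|^\theta$ as small as desired.
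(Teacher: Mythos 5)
Your route is genuinely different from the paper's, which disposes of this lemma by citing Łojasiewicz's Théorème 2 directly; giving a self-contained proof via the gradient inequality plus a gradient-flow/arclength argument is a perfectly legitimate alternative, and the core computation (the differential inequality for $H(t)=|q(\psi(t))-q(\varphi_0)|^{\theta}$, the finite-length conclusion, and the identification $2+\gamma=1/\theta$ with $\gamma\geq 0$ since $\theta\in(0,1/2]$) is the classical one and is correct. The flow can only stop, inside the ball where the gradient inequality holds, at a point of the level set $\{q=q(\varphi_0)\}$, since any critical point there with a different $q$-value would violate the gradient inequality; the discontinuity introduced by the $\sgn$ factor is harmless (treat the cases $q(\varphi)>q(\varphi_0)$ and $q(\varphi)<q(\varphi_0)$ separately and stop the flow the first time the level is reached), and the case $q(\varphi)=q(\varphi_0)$ is trivial.

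The gap is in your last step, the ``self-referential smallness condition.'' Shrinking $\tilde\delta$ does confine the trajectory to the larger ball $B(\varphi_0,\tilde\delta_1)$ where the gradient inequality holds, because there you only need $\tilde\delta+\sup_{B(\varphi_0,\tilde\delta)}\frac{1}{C_1\theta}|q-q(\varphi_0)|^{\theta}<\tilde\delta_1$, which continuity of $q$ gives. But it cannot force the limit point $\bar\varphi$ into the \emph{same} ball $B(\varphi_0,\tilde\delta)$ that constrains $\varphi$: for starting points with $|\varphi-\varphi_0|$ close to $\tilde\delta$, you would need the arclength bound $\frac{1}{C_1\theta}|q(\varphi)-q(\varphi_0)|^{\theta}$ to be smaller than $\tilde\delta-|\varphi-\varphi_0|$, a quantity that can be arbitrarily small for admissible $\varphi$, and shrinking $\tilde\delta$ does not help since both sides shrink with it. This matters because the statement takes the infimum only over $\bar\varphi\in B(\varphi_0,\tilde\delta)$, which makes the claimed bound \emph{stronger} than the one your flow argument delivers; if $\bar\varphi$ exits the ball, your chain $\inf\{\cdots\}\leq|\varphi-\bar\varphi|\leq\frac{1}{C_1\theta}|q(\varphi)-q(\varphi_0)|^{\theta}$ breaks at the first inequality. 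What your argument does prove cleanly is the two-radius version ($\varphi\in B(\varphi_0,\tilde\delta)$, $\bar\varphi$ ranging over the level set in $B(\varphi_0,\tilde\delta_1)$, or over the whole level set), and also the same-radius bound for $\varphi\in B(\varphi_0,\tilde\delta/2)$, say, since there $\varphi_0$ itself is an admissible competitor and one can split into the cases $\frac{1}{C_1\theta}|q(\varphi)-q(\varphi_0)|^{\theta}\lessgtr\tilde\delta-|\varphi-\varphi_0|$. To get the statement exactly as written you must either handle the boundary annulus $\tilde\delta/2\leq|\varphi-\varphi_0|<\tilde\delta$ by a separate argument (this is where points of the level set just outside the ball could a priori be much closer to $\varphi$ than any level-set point inside it) or, as the paper does, invoke Łojasiewicz's theorem in the form stated.
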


     \begin{proof}
        See \cite[Théorème 2]{Loja1965}.
    \end{proof}   

    Finally, we state a generalized version by Henry \cite{MR2160744} of Smale's version of Sard's theorem \cite{MR185604}, which we use to 
    prove the genericity part of our main results. 

    \begin{lemmaletter}\label{lm:henry}
        Let $X,Y$ and $Z$ be Banach spaces, let $U \subset X$ and $V \subset Y$ be 
        open subsets and let $\mathcal{F}: V \times U \rightarrow Z$ be a map of class $\mathcal{C}^1$ 
        with $z_0 \in {\rm Im}(\mathcal{F})$. Suppose that 
        \begin{itemize}
        \item[{\rm (i)}] For each $y \in V$ the map $x \mapsto \mathcal{F}(y,x)=:\mathcal{F}_y(x)$ is Fredholm of index $\ell < 1$, {\it i.e.} we have $\partial_x \mathcal{F}_y:X \rightarrow Z$ is Fredholm of 
        index $\ell < 1$ for 
        any $x \in U$;
        \item[{\rm (ii)}]  $z_0$ is a regular value of $\mathcal{F}$; 
        \item[{\rm (iii)}]  Let $\iota: \mathcal{F}^{-1} (z_0) \rightarrow Y \times X$ be the canonical embedding and let 
        $\pi_1 : Y \times X \rightarrow Y$ be projection onto the first factor. Then, $\pi_1 \circ \iota$ 
        is $\sigma$-proper, {\it i.e.}, $ \mathcal{F}^{-1}(z_{0})
       =\bigcup_{j=1}^{\infty} C_{j}$, where $C_{j}$ is a closed subset of $ \mathcal{F}^{-1}(z_{0})$ and 
       $\left . \pi_{1} \circ \iota\right |_{C_{j}}$ is proper for all $j\in\mathbb{N}$.
        \end{itemize}
       Then, the set $\{ y \in V : z_0 \mbox{ is a regular value of }\mathcal{F}(y, \cdot) \}$ is 
       an open and dense subset of $V$.
    \end{lemmaletter}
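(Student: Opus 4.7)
The plan is to reduce the statement to a Sard-Smale-type argument applied to the projection from the level set $\mathcal{F}^{-1}(z_0)$ onto the parameter space $V$. Set
\[
\mathcal{R} = \{ y \in V : z_0 \text{ is a regular value of } \mathcal{F}_y \},
\]
which is the set we wish to prove is open and dense. First I would treat openness. Fix $y_0 \in \mathcal{R}$. By (iii), $\pi_1\circ\iota$ is $\sigma$-proper, so any sequence $(y_m, x_m) \in \mathcal{F}^{-1}(z_0)$ with $y_m \to y_0$ admits, after restricting to one of the pieces $C_j$, a convergent subsequence $x_m \to x_0$ with $\mathcal{F}(y_0, x_0) = z_0$. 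Since $\partial_x \mathcal{F}_{y_0}(x_0)$ is surjective Fredholm by assumption, stability of the set of surjective Fredholm operators of fixed index under small perturbations in operator norm, combined with continuity of $\partial_x \mathcal{F}$, forces $\partial_x \mathcal{F}_{y_m}(x_m)$ to be surjective for $m$ large. A diagonal argument over the countable family $\{C_j\}$ then yields a neighborhood of $y_0$ contained in $\mathcal{R}$.

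Next I would prove density via a Sard-Smale argument. By hypothesis (ii), $z_0$ is a regular value of $\mathcal{F}$, so the implicit function theorem gives $\mathcal{F}^{-1}(z_0)$ the structure of a $\mathcal{C}^1$ Banach submanifold of $V \times U$. Consider the map $\pi := \pi_1 \circ \iota \colon \mathcal{F}^{-1}(z_0) \to V$. At a point $(y,x)$, a tangent vector $(\dot y, \dot x) \in T_{(y,x)} \mathcal{F}^{-1}(z_0)$ satisfies $\partial_y \mathcal{F}(\dot y) + \partial_x \mathcal{F}(\dot x) = 0$ and $d\pi(\dot y, \dot x) = \dot y$. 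A short linear-algebraic computation then identifies
\[
\ker d\pi_{(y,x)} \cong \ker \partial_x \mathcal{F}_y(x), \qquad
\operatorname{coker} d\pi_{(y,x)} \cong \operatorname{coker} \partial_x \mathcal{F}_y(x),
\]
so $d\pi$ is Fredholm of the same index as $\partial_x \mathcal{F}_y$, which is strictly less than $1$ by (i). Moreover, from the cokernel identification, $y \in V$ is a regular value of $\pi$ precisely when $\partial_x \mathcal{F}_y(x)$ is surjective for every $x$ in the fiber $\pi^{-1}(y)$, that is, precisely when $y \in \mathcal{R}$.

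Hypothesis (iii) is exactly the $\sigma$-properness needed to apply Smale's version of Sard's theorem to $\pi$: a $\sigma$-proper $\mathcal{C}^1$ Fredholm map whose index is at most zero has a residual set of regular values. This gives that $\mathcal{R}$ is residual, hence dense, in $V$, and combined with the openness proved in the first step completes the argument. The main technical obstacle I expect is the openness step rather than density: density reduces cleanly to Smale's theorem once the Fredholm identification of $d\pi$ is made, whereas openness requires carefully interleaving the $\sigma$-proper decomposition of $\mathcal{F}^{-1}(z_0)$ with the quantitative openness of surjective Fredholm operators, since without some form of properness the lack of compactness in infinite dimensions would allow the $x_m$ to escape to the boundary of $U$.
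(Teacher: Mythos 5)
The paper does not actually prove this lemma: its ``proof'' is a one-line citation of Henry's Theorem~5.4, so there is no argument in the text to compare yours against line by line; your sketch is an attempt to reconstruct the cited theorem itself. The density half is essentially the standard and correct route: hypothesis (ii) makes $\mathcal{F}^{-1}(z_0)$ a $\mathcal{C}^1$ Banach manifold, the kernel/cokernel identification shows that $\pi=\pi_1\circ\iota$ is Fredholm of the same index as $\partial_x\mathcal{F}_y$, regular values of $\pi$ are exactly the parameters $y$ for which $z_0$ is a regular value of $\mathcal{F}_y$, and the $\sigma$-proper Sard--Smale theorem (index $<1$, class $\mathcal{C}^1$) makes this set residual, hence dense. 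One detail worth recording explicitly: the cokernel identification uses the surjectivity of the full differential $D\mathcal{F}$ from hypothesis (ii) to see that $\dot y\mapsto \partial_y\mathcal{F}\,\dot y \bmod \operatorname{Im}\partial_x\mathcal{F}_y$ induces an isomorphism $Y/\operatorname{Im}\, d\pi\to Z/\operatorname{Im}\,\partial_x\mathcal{F}_y$.

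The openness half, however, has a genuine gap. Your sequential argument needs a convergent subsequence of critical points $(y_m,x_m)\in\mathcal{F}^{-1}(z_0)$ with $y_m\to y_0$, but $\sigma$-properness only gives compactness of $(\pi|_{C_j})^{-1}(K)$ for each \emph{fixed} $j$; if the points $(y_m,x_m)$ lie in infinitely many distinct pieces $C_{j_m}$, no subsequence need converge, and the phrase ``diagonal argument over the countable family'' does not produce one. What $\sigma$-properness yields directly is that the set of non-regular parameters equals $\bigcup_j \pi(\Sigma\cap C_j)$, where $\Sigma$ is the critical set (closed, since surjectivity is open among Fredholm operators of index $<1$); each piece $\pi(\Sigma\cap C_j)$ is closed by properness and nowhere dense by Sard--Smale, so the bad set is a meager $F_\sigma$ and its complement is residual --- but an $F_\sigma$ need not be closed, so openness of the complement does not follow from this decomposition. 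Obtaining openness requires either properness of $\pi$ itself (or on preimages of compact sets), or a separate argument; this is exactly the part of Henry's theorem your sketch does not reconstruct.
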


     \begin{proof}
        See \cite[Theorem~5.4]{MR2160744}
    \end{proof}

    \section{Generic stability estimates (proofs of Theorem~\ref{quant_stability_thm} and Corollary~\ref{quant_stability_metrics_thm})} \label{sec:generic_stable} 
  
    We prove Theorem~\ref{quant_stability_thm} in stages. 
Like in \cite{BE, ENS} the main point is to prove a local version of the quantitative stability 
(Proposition \ref{proposition local bound} below), which can then be  extended to the
    global case. Finally we prove the genericity statement, the arguments for which go beyond those in \cite{ENS}. 

    \subsection{Local stability estimate}
    In this section, we prove that if $u$ lies in a small neighborhood of the minimizing 
    set $\mathcal{M}_{g,k}^*$ then it satisfies \eqref{stabilityestimate}. As a first step, 
    we introduce the notion of integrability and prove that the function $q$ defined in 
    Lemma \ref{lm:lyapunov-schmidt} is constant in the integrable case, which we will see  
    implies stability. 
    
    \begin{definition}
        Let $k\in\mathbb N$ and let $(M,g)$ be a smooth, closed, $n$-dimensional Riemannian manifold with $n>2k$.
        A function $v \in \mathcal{CQC}_{g,k}^*$ is said to be integrable 
        if for all $\varphi \in K$ 
        there exists a one-parameter family of functions $\{v_t\}_{t \in(-\delta, \delta)}$, 
        with $v_0=v$, $\left.{\partial_t}\right|_{t=0} v_t=\varphi$, and 
        $v_t \in \mathcal{CQC}_{g,k}^*$ for all $0<t\ll1$ sufficiently small.
    \end{definition}

    With this definition in mind, we have the following auxiliary result.

    \begin{lemma}
    \label{lemma v integrable q constant}
         Let $n,k\in\mathbb N$ with $n>2k$ and let $(M,g)$ be a smooth, closed, $n$-dimensional Riemannian manifold.
        If $v \in \mathcal{M}_{g,k}^*$, then $v$ is integrable if and only if $q$ is constant in a 
        neighborhood of $0 \in K$. In particular, if $v \in \mathcal{M}_{g,k}^*$ is an integrable minimizer, 
        then there is $\delta > 0$ such that 
        \begin{equation}
            \label{M cap B = L lemma}
             \mathcal{M}_{g,k}^* \cap \mathcal{B}(v, \delta)=\mathcal{L} \cap \mathcal{B}(v, \delta),
        \end{equation}
        where $\mathcal{L}$ is as described in Lemma \ref{lm:lyapunov-schmidt} {\rm (i)}. 
    \end{lemma}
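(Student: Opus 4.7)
The plan is to prove both implications by carefully exploiting the correspondence from Lemma \ref{lm:lyapunov-schmidt} between critical points of $\mathcal{Q}_{g,k}$ on $\mathcal{B}$ near $v$ and critical points of the real analytic function $q$ on $U \subset K$, together with the fact that $v$ being a minimizer forces $q(\varphi) \geq q(0) = \mathcal{Y}_{k,+}(M,[g])$ for all $\varphi \in U$.

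For the easy direction, I assume that $q$ is constant in a neighborhood $U' \subset U$ of $0$. Given any $\varphi \in K$, I define the candidate curve $v_t := v + t\varphi + F(t\varphi)$, which lies in $\mathcal{L} \subset \mathcal{B}$ for $|t|$ small. Since $\nabla q \equiv 0$ on $U'$, part (ii) of Lemma \ref{lm:lyapunov-schmidt} yields $\nabla_{\mathcal{B}} \mathcal{Q}_{g,k}(v_t) = 0$, which together with the characterization of critical points as solutions to \eqref{const_q_pde} gives $v_t \in \mathcal{CQC}_{g,k}^*$. The relations $F(0)=0$ and $\nabla F(0)=0$ ensure $v_0 = v$ and $\partial_t v_t|_{t=0} = \varphi$, so $v$ is integrable.

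For the harder direction, suppose $v$ is integrable. Fix $\varphi \in K$ and let $v_t \in \mathcal{CQC}_{g,k}^*$ be the corresponding curve with $v_0 = v$ and $\partial_t v_t|_{t=0} = \varphi$. Applying Lemma \ref{lm:lyapunov-schmidt} (iii) to each $v_t$, I write $v_t = v + \varphi_t + F(\varphi_t)$ with $\varphi_t := \pi_K(v_t - v) \in U$; note that $\partial_t \varphi_t|_{t=0} = \pi_K \varphi = \varphi$ since $\varphi \in K$. Because $v_t$ is a critical point, part (ii) yields $\nabla q(\varphi_t) \equiv 0$, and therefore $q(\varphi_t) \equiv q(0)$. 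Thus the level set $\{q = q(0)\}$ contains a real analytic arc through $0$ tangent to every prescribed $\varphi \in K$. The key step is then to promote this ``arc in every direction'' property to constancy in a full neighborhood; this is where I expect the main subtle point to lie. To do this I expand
\begin{equation*}
q(\varphi) - q(0) = \sum_{j \geq d} q_j(\varphi),
\end{equation*}
where each $q_j$ is a homogeneous polynomial on $K$ of degree $j$ and $d \geq 2$ is the lowest order with $q_d \not\equiv 0$ (using $\nabla q(0) = 0$). Substituting $\varphi_t = t\varphi + \mathrm{O}(t^2)$ into the identity $q(\varphi_t) - q(0) = 0$ and extracting the leading order in $t$ gives $q_d(\varphi) = 0$ for every $\varphi \in K$, forcing $q_d \equiv 0$ and contradicting the choice of $d$. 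Hence the Taylor series of $q - q(0)$ vanishes identically at $0$, and by real analyticity $q$ is constant in a neighborhood of $0 \in K$. (One could alternatively use the fact that $v$ is a minimizer, giving $q - q(0) \geq 0$, to shorten the Taylor-order argument, but the direct Taylor argument already suffices.)

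Finally, once $q$ is constant on a neighborhood $U' \subset U$, the identity \eqref{M cap B = L lemma} follows quickly. The inclusion $\mathcal{L} \cap \mathcal{B}(v,\delta) \subset \mathcal{M}_{g,k}^*$ comes from the fact that every $u = v + \varphi + F(\varphi)$ with $\varphi \in U'$ satisfies $\mathcal{Q}_{g,k}(u) = q(\varphi) = q(0) = \mathcal{Y}_{k,+}(M,[g])$, so $u$ is a minimizer. Conversely, any $\widetilde u \in \mathcal{M}_{g,k}^* \cap \mathcal{B}(v,\delta) \subset \mathcal{CQC}_{g,k}^*$ can be written as $\widetilde u = v + \pi_K(\widetilde u - v) + F(\pi_K(\widetilde u - v)) \in \mathcal{L}$ directly from Lemma \ref{lm:lyapunov-schmidt} (iii), after shrinking $\delta > 0$ so that $\pi_K(\widetilde u - v) \in U'$.
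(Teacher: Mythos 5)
Your overall architecture matches the paper's: both directions run through the Lyapunov--Schmidt correspondence of Lemma \ref{lm:lyapunov-schmidt}, your easy direction ($q$ constant $\Rightarrow$ integrable, via $v_t = v + t\varphi + F(t\varphi)$ and $\nabla q \equiv 0$) is the paper's argument, and your proof of \eqref{M cap B = L lemma} is the paper's as well. The hard direction is where you deviate: the paper fixes a single direction $\varphi$ with $\nabla q_{k_0}(\varphi) \neq 0$ and derives a contradiction directly from $\nabla q(\varphi_s) = 0$, factoring $|\varphi_s|^{k_0-1}$ out of the leading homogeneous part of the \emph{gradient}; you instead use every $\varphi \in K$, pass from $\nabla q(\varphi_t) = 0$ to $q(\varphi_t) = q(0)$, and read off the leading homogeneous term of $q$ itself to force $q_d \equiv 0$. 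That variant is legitimate and avoids dividing by $|\varphi_s|$ and choosing a special direction.

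The one genuine issue is the step you assert with a bare ``therefore'': that $\nabla q(\varphi_t) \equiv 0$ implies $q(\varphi_t) \equiv q(0)$. Under the paper's definition of integrability the family $\{v_t\}$ is only required to have a derivative at $t = 0$; it is not assumed $\mathcal{C}^1$ (or even continuous) in $t$ away from $0$, so the chain-rule argument $\frac{\ud}{\ud t} q(\varphi_t) = \nabla q(\varphi_t) \cdot \dot{\varphi}_t = 0$ is not available, and your phrase ``real analytic arc'' overstates the regularity you actually have (also $\varphi_t = t\varphi + \mathcal O(t^2)$ should be $t\varphi + \mathrm{o}(t)$, though this does not harm the leading-order extraction). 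The gap is fixable: by the Łojasiewicz \emph{gradient} inequality for the analytic function $q$, say $|\nabla q(\varphi)| \geq C\,|q(\varphi) - q(0)|^{\theta}$ near $0$, every critical point of $q$ sufficiently close to $0$ has critical value $q(0)$, and since $\varphi_t \to 0$ this yields $q(\varphi_t) = q(0)$ for small $t$ without any regularity in $t$. Alternatively, you can sidestep the issue entirely by arguing on the level of $\nabla q$ as the paper does, writing $\nabla q(\varphi_s) = |\varphi_s|^{k_0 - 1}\bigl( \nabla q_{k_0}(\varphi_s / |\varphi_s|) + \mathrm{o}(1) \bigr)$ for one fixed $\varphi$ with $\nabla q_{k_0}(\varphi) \neq 0$ and using $\varphi_s/|\varphi_s| \to \varphi/|\varphi|$. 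With either patch your proof is complete.
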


    \begin{proof}
    In this proof, we identify $K$ with $\R^\ell$. We may assume that $\ell \geq 1$, for otherwise there is nothing to show.
    
First, suppose that $q \equiv q(0)$ in a neighborhood $V \subset \R^{\ell}$ of $0$, and let $\varphi \in K$. 
Let $s \in \R$ be small enough so that $s\varphi \in U$ and  consider 
\[ v_s = v + s \varphi + F(s \varphi) \]
where $U$ and $F$ are as in Lemma \ref{lm:lyapunov-schmidt}. Then 
\[ \left. \frac{\mathrm d}{\mathrm d s}\right |_{s = 0} v_s = \varphi \]
because $\nabla F(0) = 0$ by Lemma \ref{lm:lyapunov-schmidt}. Still by that lemma, we have $v_s \in \mathcal B$ and 
\[ \nabla_{\mathcal B} \mathcal Q_{g,k}(v_s) = \nabla q(s \varphi) = 0 \]
for $s$ small enough, because $\nabla q \equiv 0$ on $V$ by assumption. Hence $v_s \in \mathcal {CQC}_{g,k}^*$. 
Since $\varphi \in K$ was arbitrary, $v$ is integrable. 

Conversely, assume now that $v$ is integrable. By contradiction, suppose that $q$ is non-constant on every 
neighborhood of $0$ in $\R^{\ell}$. Since $q$ is analytic, we then have, for $\varphi$ in some neighborhood $V$ 
of $0$, 
     \[ q(\varphi) = q(0) +  q_{k_0}(\varphi) + q_R(\varphi). \]
     Here $q_{k_0}$ is a non-trivial homogeneous polynomial of some degree $k_0 \in \mathbb N$ and the remainder 
     term $q_R$ is a sum of homogeneous polynomials of degree greater than $k_0$. 

     We fix a $\varphi \in U$ such that $\nabla q_{k_0}(\varphi) \neq 0$. Since $v$ is integrable, there 
     exists $(v_s)_{s \in (-\delta, \delta)} \subset \mathcal{CQC}_{g,k}^*$ such that $v_0 = v$ 
     and $\left. \frac{\mathrm d}{\mathrm d s}\right |_{s = 0} v_s = \varphi$ for 
     all $s \in (-\delta, \delta)$. By 
     Lemma \ref{lm:lyapunov-schmidt}.(ii), after possibly choosing $\delta$ and $U$ to be smaller, there 
     are $\varphi_s \in U$ such that
     \begin{equation}
         \label{v s Q integrable proof}
         v_s = v + \varphi_s + F(\varphi_s) \quad \text{ for every }  \quad s \in (-\delta, \delta), 
     \end{equation} 
     where $F$ is the map from Lemma \ref{lm:lyapunov-schmidt}. By that lemma, we have 
     \begin{eqnarray}
         \label{Q integrable contradiction}
             0 & = & \nabla_{\mathcal B} \mathcal Q_{g,k}(v_s) = \nabla q(\varphi_s) = 
             \nabla q_{k_0}(\varphi_s) + \nabla q_R(\varphi_s) 
             \\ \nonumber 
             & = &  |\varphi_s|^{k_0 -1} \left( \nabla q_{k_0} \left( \frac{\varphi_s}{|\varphi_s|}  \right) 
             + |\varphi_s|^{-k_0+1} \nabla q_{R} (\varphi_s) \right).
     \end{eqnarray}
          Here we used that the vector $\nabla q_{k_0}(\varphi)$ consists of homogeneous polynomials of 
          degree $k_0-1$. Moreover, since $q_R$ is a sum of homogeneous polynomials of degree strictly greater 
          than $k_0$, we have $|\varphi_s|^{-k_0+1} \nabla q_{R} (\varphi_s)$ as $s \to 0$. On the other hand, 
          $\left. \frac{\mathrm d}{\mathrm d s}\right |_{s = 0} v_s = \varphi$ and 
          \eqref{v s Q integrable proof} combined with $\nabla F(0) = 0$ imply 
          $\frac{\varphi_s}{|\varphi_s|} \to \frac{\varphi}{|\varphi|}$ as $s \to 0$.  Thus the sum inside the 
          parentheses on the right side of \eqref{Q integrable contradiction} is non-zero for $s$ small enough. 
          This contradiction finishes the proof of the converse implication. 

          It remains to prove that \eqref{M cap B = L lemma} holds provided $v$ is integrable. The 
          inclusion $\subset$ is given by Lemma \ref{lm:lyapunov-schmidt}.(ii), because 
          $\mathcal M_{g,k}^* \subset \mathcal{CQC}_{g,k}^*$. For the 
          reverse inclusion, we have already proved that $q$ is locally constant on $K$ near $0$ if $v \in 
          \mathcal{M}_{g,k}^*$ is integrable. By the definition of $q$, this is the same thing as saying that 
          $\mathcal{Q}_{g,k}$ is constant on $\mathcal L \cap \mathcal B(v, \delta)$ for some $\delta > 0$. Hence, 
          for $u \in \mathcal L \cap \mathcal B(v, \delta)$ it follows $\mathcal{Q}_{g,k}(u) = 
          \mathcal{Q}_{g,k}(v)$ and hence $u \in \mathcal{M}_{g,k}^*$. Thus \eqref{M cap B = L lemma} is proved. 
    \end{proof}

    We recall the notion of the Adams-Simon positivity condition as in 
    \cite{MR963501} (see also \cite{MR3352243}).

\begin{definition}\label{def:adams-simon}
    Let $k\in\mathbb N$ and let $(M,g)$ be a smooth, closed, $n$-dimensional Riemannian manifold with $n>2k$.
    Suppose that $u_0 \in \mathcal{CQC}_{g,k}^*$ is nonintegrable and that $q: U \rightarrow$ $\mathbb{R}$, 
    where $U \subset \operatorname{ker} \nabla_{\mathcal{B}}^2 \mathcal{Q}_{g,k}(u_0) \cong \mathbb{R}^{\ell}$, 
    is the function defined in Lemma~\ref{lm:lyapunov-schmidt}.
    Since $q$ is analytic, we can expand it in a power series
    $$
    q(v)=q(0)+\sum_{j \geq p} q_j(v),
    $$
    where each $q_j$ is a  homogeneous polynomial of degree $j$ and $p$ is chosen so that $q_p \not \equiv 0$. 
    We  call $p$ the order of integrability of $u_0$. 
    We say that $u_0$ satisfies the Adams-Simon positivity condition of order $p$, denoted by $\mathrm{AS}_p$, if $p$ is 
    the order of integrability of $u_0$ and $\left.q_p\right|_{\mathbb{S}^{\ell-1}}$ attains a positive maximum at 
    some $v \in \mathbb{S}^{\ell-1}$.
\end{definition}

    \begin{remark}
        When the order of integrability $p$ is odd, then $q_p \not \equiv 0$ is an odd function, and hence the 
        Adams--Simon positivity 
        condition is automatically satisfied in this case. Moreover, by arguing as in  \cite[Appendix A]{MR3352243} one 
        finds that the order of integrability 
%        $($at a critical point of $\mathcal{Q}_{g,k}$$)$ 
        always satisfies $p \geqslant 3$, and that $q_3(v)$ can be explicitly expressed  
    \begin{equation}\label{AS3}
            q_3(v)=C_{n,k} \mathcal Q_{g,k}(u_0) \int_M v^3 \ud \mu_{g} 
        \end{equation}
        for some dimensional constant $C_{n,k}>0$. 
    \end{remark}

    We establish the local version of Theorem~\ref{quant_stability_thm}. 
    We need a localized measure of how far $u$ is from being a minimizer close to some given minimizer $v$.
    Given $\delta>0$ and $v \in \mathcal{M}_{g,k}^*$, we let
    \begin{equation}
        \label{dist delta}
           d_{v,\delta}\left(u, \mathcal{M}_{g,k}^*\right)=\frac{\inf \left\{\|u-\tilde{v}\|_{W^{k,2}(M)} : 
           \tilde{v} \in \mathcal{M}_{g,k}^* \cap \mathcal{B}(v, \delta)\right\}}{\|u\|_{W^{k,2}(M)}}.
    \end{equation}

    \begin{proposition}
    \label{proposition local bound}
       Let $n,k\in\mathbb N$ with $n>2k$ and let $(M,g)$ be a smooth, closed, $n$-dimensional Riemannian manifold.
        For any  $v \in \mathcal{M}_{g,k}^*$, there exist constants $c > 0$, $\gamma \geq 0$ and $\delta > 0$ 
        depending on $v$ such that
        $$
        \mathcal Q_{g,k}(u)-\mathcal{Y}_{k,+}(M,[g]) \geq c d_{v,\delta}\left(u, \mathcal{M}_{g,k}^*\right)^{2+\gamma} 
        \quad {\rm for \ all} \quad u \in \mathcal{B}(v, \delta) .
        $$
    \end{proposition}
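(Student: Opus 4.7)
The plan is to combine the Lyapunov--Schmidt reduction of Lemma~\ref{lm:lyapunov-schmidt} with the finite-dimensional Łojasiewicz inequality of Lemma~\ref{lemma lojasiewicz}. By the scale-invariance of $\mathcal{Q}_{g,k}$ and the normalization in \eqref{dist delta}, I may restrict attention to $u \in \mathcal{B}$. Fix $v \in \mathcal{M}_{g,k}^*$ and let $K = \ker D^2\mathcal{Q}_{g,k}(v)$, $F$, $U$, $q$ be as in Lemma~\ref{lm:lyapunov-schmidt}. For $\delta$ small enough and $u \in \mathcal{B}$ with $\|u - v\|_{W^{k,2}(M)} \leq \delta$, set $\varphi := \pi_K(u - v) \in U$ and write
\[ u = v + \varphi + F(\varphi) + \eta, \qquad \eta \in K^{\perp}, \]
so that $v + \varphi + F(\varphi) \in \mathcal{L}$ sits on the reduced manifold and $\eta$ captures the transverse displacement. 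The fundamental decomposition
\[ \mathcal{Q}_{g,k}(u) - \mathcal{Y}_{k,+}(M,[g]) = \bigl[\mathcal{Q}_{g,k}(u) - q(\varphi)\bigr] + \bigl[q(\varphi) - q(0)\bigr] =: (\mathrm{I}) + (\mathrm{II}) \]
reduces the proof to estimating the two brackets separately in terms of $\|\eta\|_{W^{k,2}(M)}$ and $\dist(\varphi, \{q = q(0)\})$.

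For $(\mathrm{I})$, I Taylor expand $\mathcal{Q}_{g,k}$ around the base point $v + \varphi + F(\varphi)$. By Lemma~\ref{lm:lyapunov-schmidt}(ii) the tangential gradient at this point equals $\nabla q(\varphi) \in K$, so its pairing with $\eta \in K^{\perp}$ produces only a lower-order term, and the leading contribution is
\[ (\mathrm{I}) = \tfrac{1}{2} D^2\mathcal{Q}_{g,k}(v + \varphi + F(\varphi))(\eta,\eta) + O\bigl(\|\eta\|_{W^{k,2}(M)}^3\bigr). \]
Since $v$ is a minimizer, $D^2\mathcal{Q}_{g,k}(v)$ is positive semi-definite on $T_v\mathcal{B}$ with finite-dimensional kernel exactly $K$; the ellipticity of the second variation supplies a spectral gap $D^2\mathcal{Q}_{g,k}(v)(\eta,\eta) \gtrsim \|\eta\|_{W^{k,2}(M)}^2$ for $\eta \in K^{\perp}$, and the uniform continuity of the Hessian established after Lemma~\ref{lem:functional_regularity} propagates this coercivity to the nearby base point. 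Absorbing the cubic remainder for $\delta$ small gives $(\mathrm{I}) \gtrsim \|\eta\|_{W^{k,2}(M)}^2$.

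For $(\mathrm{II})$ I split on the integrability of $v$. If $v$ is integrable, Lemma~\ref{lemma v integrable q constant} gives $q \equiv q(0)$ near $0$, so $(\mathrm{II}) = 0$ and $v + \varphi + F(\varphi) \in \mathcal{M}_{g,k}^*$ by \eqref{M cap B = L lemma}; the estimate then follows directly with $\gamma = 0$. Otherwise $q$ is real-analytic and non-constant on every neighborhood of $0$, so Lemma~\ref{lemma lojasiewicz} supplies $\gamma \geq 0$ and $\widetilde{\delta} > 0$ with $q(\varphi) - q(0) \gtrsim \dist(\varphi, Z)^{2+\gamma}$, where $Z := \{\bar\varphi \in B(0,\widetilde{\delta}) : q(\bar\varphi) = q(0)\}$. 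Choosing a nearest point $\bar\varphi \in Z$ to $\varphi$, the element $\bar v := v + \bar\varphi + F(\bar\varphi)$ lies in $\mathcal{CQC}_{g,k}^*$ and has energy $q(\bar\varphi) = q(0) = \mathcal{Y}_{k,+}(M,[g])$, hence $\bar v \in \mathcal{M}_{g,k}^*$. The Lipschitz control of $F$ from Lemma~\ref{lm:lyapunov-schmidt}(iv) together with the triangle inequality yields $\|u - \bar v\|_{W^{k,2}(M)} \lesssim \|\eta\|_{W^{k,2}(M)} + |\varphi - \bar\varphi|$, and combining with $(\mathrm{I})$ and the Łojasiewicz inequality,
\[ d_{v,\delta}(u,\mathcal{M}_{g,k}^*)^{2+\gamma} \lesssim \|\eta\|_{W^{k,2}(M)}^{2+\gamma} + \dist(\varphi, Z)^{2+\gamma} \lesssim \|\eta\|_{W^{k,2}(M)}^2 + (\mathrm{II}) \lesssim (\mathrm{I}) + (\mathrm{II}). \]
The main technical obstacle is step $(\mathrm{I})$: verifying the spectral gap for $D^2\mathcal{Q}_{g,k}(v)$ on $K^{\perp}$, accounting correctly for the fact that the tangential gradient at $v + \varphi + F(\varphi)$ lies in $K$ rather than vanishing outright, and absorbing the cubic remainder uniformly in the neighborhood---all of which rest on the $\mathcal{C}^2$-regularity and continuity-of-Hessian estimates from Section~\ref{sec:funct_regularity}.
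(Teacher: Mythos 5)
Your proposal is correct and follows essentially the same route as the paper's proof: the same Lyapunov--Schmidt decomposition $u = u_{\mathcal L} + u^\perp$, the same splitting of $\mathcal Q_{g,k}(u)-\mathcal Y$ into a transverse quadratic term and the reduced term $q(\varphi)-q(0)$, the same spectral-gap/coercivity bound on $K^\perp$, and the same case split (integrable via Lemma \ref{lemma v integrable q constant} with $\gamma=0$, non-integrable via Łojasiewicz plus the Lipschitz bound on $F$). The only cosmetic difference is that the first-order term in $(\mathrm{I})$ vanishes exactly, since $\nabla_{\mathcal B}\mathcal Q_{g,k}(u_{\mathcal L}) = \nabla q(\varphi)\in K$ pairs to zero with $u^\perp\in K^\perp$, rather than merely being lower order.
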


    \begin{proof}
Let $v \in \mathcal M_{g,k}^*$. We will use the notation of Lemma \ref{lm:lyapunov-schmidt} without further comment 
throughout this proof. Also, we will abbreviate $\mathcal Y := \mathcal Y_{k,+}(M, [g])$ to make notation lighter. To start with, 
let $u \in \mathcal B(v, \delta)$. 

We divide the proof into some steps as follows:

\noindent \textit{Step 1. Decomposing $u$.}

We now decompose $u$ according to the Lyapunov-Schmidt reduction from Lemma \ref{lm:lyapunov-schmidt} by letting 
\[ u_{\mathcal L} := v + \pi_K (u-v) + F(\pi_K(u-v))\]
and setting $u^\perp := u - u_{\mathcal L}$. Note that, since $F$ maps into $K^\perp$, we have
\[ u^\perp = (u-v) - \pi_K (u-v) - F(\pi_K(u-v)) \in K^\perp.  \]

It will be convenient to write 
\begin{equation}
    \label{Q decomp proof}
    \mathcal Q_{g,k}(u) - \mathcal Y = (\mathcal Q_{g,k}(u) - \mathcal Q_{g,k}(u_{\mathcal L})) 
    + (\mathcal Q_{g,k}(u_{\mathcal L}) - \mathcal Y) 
\end{equation}
and bound the two brackets on the right side separately. 

\noindent \textit{Step 2. The non-degenerate and the integrable case. }

We first bound the first term of \eqref{Q decomp proof}. Here is where we crucially use the decomposition 
$u = u_{\mathcal L} + u^\perp$. Using Taylor's theorem with the mean-value formula for the 
remainder term, we can express
\begin{eqnarray}
    \label{taylor Q}
    \mathcal Q_{g,k}(u) - \mathcal Q_{g,k}(u_{\mathcal L}) & = & D_{\mathcal B} 
    \mathcal Q_{g,k}(u_{\mathcal L})(u^\perp) + 
    \frac{1}{2} D^2_{\mathcal B} \mathcal Q_{g,k}(\zeta)(u^\perp, u^\perp) \\ \nonumber 
    & = & 
    \frac{1}{2} D^2_{\mathcal B} \mathcal Q_{g,k}(v)(u^\perp, u^\perp)  + \mathrm{o}(1) \|u^\perp\|^2_{W^{k,2}(M)},  
\end{eqnarray} 
where $\zeta$ lies on a geodesic in $\mathcal B$ between $u$ and $u_{\mathcal L}$. For the second equality we 
used that $u^\perp \in K^\perp$ implies $D_{\mathcal B} \mathcal Q_{g,k}(u_{\mathcal L})
(u^\perp) = 0$ by Lemma \ref{lm:lyapunov-schmidt}, 
as well as the fact that $D^2_{\mathcal B} \mathcal Q_{g,k}(\cdot)$ is continuous. %Accordingly, $\mathrm{o}(1)$ denotes 
%a term which goes to zero as $\|u - v\|_{W^{k,2}} \to 0$. 

To estimate the right-hand-side of \eqref{taylor Q} we use the fact that $v$ minimizes 
$\mathcal{Q}_{g,k}$, and so $D^2 \mathcal{Q}_{g,k}(v) \geq 0$. Since $u^\perp \in K^\perp$, we 
have the lower bound 
\begin{equation}
    \label{spec-prelim}
    D^2_{\mathcal B} \mathcal Q_{g,k}(v)[u^\perp, u^\perp] \geq \lambda_1 \|u^\perp\|_{L^2(M)} 
    \geq c \int_M v^{\frac{4k}{n-2k} }(u^\perp)^2 \ud \mu_g,
\end{equation} 
where $\lambda_1 > 0$ is the smallest positive eigenvalue of $D^2_{\mathcal B} 
\mathcal Q_{g,k}(v)$. Since $v >0$ on 
the compact manifold $M$, we may choose $c = \lambda_1 (\max_M v)^{-{4k}/{(n-2k)}}$ in the second 
inequality. Recalling from Lemma \ref{lem:functional_regularity} that 
\[D^2 \mathcal Q_{g,k}(v)(w,w) = \frac{4}{n-2k} \left( \int_M w P_{g,k} (w)\ud \mu_g + \frac{n+2k}{n-2k} \mathcal Y 
\int_M v^\frac{4k}{n-2k} w^2\ud\mu_g  \right)
\]
 some elementary manipulations give us 
\[ 
D^2_{\mathcal B} \mathcal Q_{g,k}(v)[u^\perp, u^\perp] \geq  c_1 \int_M u^\perp P_{g,k}
(u^\perp)\ud\mu_g  \geq c_2 \|u\|_{W^{k,2}(M)}^2 ,
 \]
where $c_1 = {c}\left(\frac{n+2k}{n-2k}\mathcal Y - c\right)^{-1}>0$, $c>0$ is given in \eqref{spec-prelim}, 
and $c_2 > 0$ exists since $P_{g,k}$ is coercive. 

Inserting this last expression into \eqref{taylor Q}, we obtain
\begin{equation}
    \label{Q spectral bound}
    \mathcal Q_{g,k}(u) - \mathcal Q_{g,k}(u_{\mathcal L}) \geq \frac{1}{4} c_2 \|u^\perp\|_{W^{k,2}(M)}^2
\end{equation} 
whenever $0<\|u^\perp\|_{W^{k,2}(M)}\ll1$ is sufficiently small. 

On the other hand, the second term in \eqref{Q decomp proof} trivially satisfies 
\begin{equation}
    \label{Q - Y geq 0}
    \mathcal Q_{g,k}(u_{\mathcal L}) - \mathcal Y \geq 0
\end{equation}
by the definition of $\mathcal Y$. 

If $K =0$ (i.e., if $v$ is non-degenerate), then it follows directly from the definitions 
that $u_{\mathcal L} = v$, and hence
\begin{equation}
    \label{Q final bound K=0}
    \|u^\perp\|_{W^{k,2}(M)}^2 =  \|u-v\|_{W^{k,2}(M)}^2 \geq \inf \{  \|u - \Tilde{v}\|_{W^{k,2}(M)}^2  \, 
    : \, \Tilde{v} \in \mathcal M_{g,k}^* \cap \mathcal B(v, \delta)\} \geq c d_{v, \delta}
    (u, \mathcal M_{g,k}^*)^2,
\end{equation}  
where in the last inequality, we used 
\[\|u\|_{W^{k,2}(M)}^2 \geq \frac{1}{2} \|v\|_{W^{k,2}(M)}^2 
\gtrsim \int_M v P_{g,k} (v)\ud\mu_g = \mathcal Y > 0.\]
Thus the assertion in case $K = 0$ follows by putting together \eqref{Q decomp proof}, 
\eqref{Q spectral bound}, \eqref{Q - Y geq 0} 
and \eqref{Q final bound K=0}. 

More generally, if $v$ is integrable, then by Lemma \ref{lemma v integrable q constant} we 
have $u_{\mathcal L} \in \mathcal M_{g,k}^* 
\cap \mathcal B(v, \delta)$ if $u$ is close enough to $v$. Thus, similarly to the above,
\begin{equation}
    \label{Q final bound v integrable}
    \|u^\perp\|_{W^{k,2}(M)}^2 =  \|u-u_{\mathcal L}\|_{W^{k,2}(M)}^2 \geq \inf 
    \{  \|u - \Tilde{v}\|_{W^{k,2}(M)}^2  \, : \, \Tilde{v} 
    \in \mathcal M_{g,k}^* \cap \mathcal B(v, \delta)\} \geq c d_{v, \delta}(u, \mathcal M_{g,k}^*)^2. 
\end{equation} 
Thus the assertion in case $v$ is integrable follows by putting 
together \eqref{Q decomp proof}, \eqref{Q spectral bound}, 
\eqref{Q - Y geq 0} and \eqref{Q final bound v integrable}.

We emphasize that in both of these cases ($v$ non-degenerate or $v$ integrable) the 
inequality \eqref{stabilityestimate}
restricted to $\mathcal{B}(v,\delta)$ holds with $\gamma = 0$. 

\noindent\textit{Step 3. The non-integrable case. }

It remains to discuss the third and hardest case, namely the case when $v$ is non-integrable. 
The complication is that
the estimates \eqref{Q final bound K=0} resp. \eqref{Q final bound v integrable} are not applicable because $u_{\mathcal L}$ is not necessarily in $\mathcal M_{g,k}^\ast$. Consequently 
we require a better lower bound on $\mathcal{Q}_{g,k}(u_{\mathcal{L}}) - \mathcal{Y}$. To obtain it, we 
invoke the Łojasiewicz 
inequality from Lemma \ref{lemma lojasiewicz}, which applies because $q$ is analytic by 
Lemma \ref{lm:lyapunov-schmidt}. 
We conclude that there exist $\gamma > 0$ such that  
\begin{equation}
    \label{Q estimate nonintegrable}
     \mathcal Q_{g,k}(u_{\mathcal L}) - \mathcal Y = q(\varphi) - q(0) \gtrsim \inf \{|\varphi-\bar{\varphi}|: 
        \bar{\varphi} \in K\cap  B(0, \delta) \; {\rm and} \;
        q(\bar{\varphi})=0\}^{2+\gamma}, 
\end{equation}
where $\varphi = \pi_K(u-v)$. (Note that we could drop the absolute value of $|q(\varphi) - q(0)|$ because $0$ is a 
local minimum of $q$.) To turn this into the desired bound, we observe that for 
any $\bar\varphi \in K \cap B(0, \delta)$ with 
$q(\bar \varphi) = q(0)$, the function $\bar v = v + \bar \varphi + F(\bar \varphi)$ satisfies 
$\mathcal Q_{g,k}(\bar v) =  q(\bar{\varphi}) = 0$ by 
Lemma \ref{lm:lyapunov-schmidt}. Hence, 
$\bar{v} \in \mathcal M_{g,k}^*$. 
  Moreover, for such $\bar{v}$, we have (still writing $\varphi = \pi_K(u-v)$) 
\begin{align*}
    \| u_{\mathcal L} - \bar{v}\|_{W^{k,2}(M)} &= \| \varphi + F(\varphi) - \bar{\varphi} 
    - F(\bar{\varphi}) \|_{W^{k,2}(M)} \\
    &\leq \| \varphi - \bar{\varphi} \|_{W^{k,2}(M)} + \|  F(\varphi) - F(\bar{\varphi}) \|_{W^{k,2}(M)} \\
    &\lesssim \| \varphi - \bar{\varphi} \|_{W^{k,2}(M)} +  \| \varphi - \bar{\varphi} \|_{\mathcal{C}^{0,\alpha}(M)} \\
    &\lesssim  \| \varphi - \bar{\varphi} \|_{W^{k,2}(M)}.     
\end{align*}
Here, the $\mathcal{C}^{0,\alpha}$ estimate follows from the corresponding one in 
Lemma \ref{lm:lyapunov-schmidt}. The last inequality simply 
uses the equivalence of any two norms on the finite-dimensional space $K$. 
From this chain of inequalities we get 
\[  \inf \{|\varphi-\bar{\varphi}|: 
        \bar{\varphi} \in K\cap  B(0, \delta), 
        \nabla q(\bar{\varphi})=0\}^{2+\gamma} \gtrsim \inf \left\{\|u_{\mathcal L}-\tilde{v}\|_{W^{k,2}(M)} : 
        \tilde{v} \in \mathcal{M}^*_{g,k} \cap \mathcal{B}(v, \delta)\right\}^{2 + \gamma} \]

Together with \eqref{Q estimate nonintegrable} and \eqref{Q spectral bound}, since 
$u^\perp = u - u_{\mathcal L}$ satisfies $\|u^\perp\|_{W^{k,2}} \leq 1$, we thus get
\begin{align*}
    \mathcal Q_{g,k}(u) - \mathcal Y &\gtrsim \frac{1}{4}\|u - u_{\mathcal L}\|_{W^{k,2}(M)}^2 
    + \inf \left\{\|u_{\mathcal L}-\tilde{v}\|_{W^{k,2}(M)} : \tilde{v} \in \mathcal{M}^*_{g,k} 
    \cap \mathcal{B}(v, \delta)\right\}^{2 + \gamma} \\
    &  \gtrsim \inf \left\{ \lambda_1 \|u - u_{\mathcal L}\|_{W^{k,2}(M)}^{2 + \gamma}+ 
    \|u_{\mathcal L}-\tilde{v}\|^{2 + \gamma}_{W^{k,2}(M)} : \tilde{v} \in \mathcal{M}^*_{g,k} 
    \cap \mathcal{B}(v, \delta)\right\} \\
    & \gtrsim  \inf \left\{  \|u - \tilde{v}\|_{W^{k,2}(M)}^{2+\gamma} : \tilde{v} 
    \in \mathcal{M}^*_{g,k} \cap \mathcal{B}(v, \delta)\right\}
\end{align*}
and the proof is complete. 
 \end{proof}

    If $v$ is integrable or non-degenerate ({\it i.e.} the kernel $K = \ker(D^2_{\mathcal{B}} 
    \mathcal{Q}_{g,k} (v)(\cdot, \cdot))$
    is trivial) then we may take $\gamma=0$.
    
    \subsection{Global stability estimate}

In the following, we again abbreviate $\mathcal Y = \mathcal Y_{k,+}(M, [g])$.
    
    \begin{proof}[Proof of Theorem~\ref{quant_stability_thm}]
Since both sides of inequality \eqref{stabilityestimate} are zero-homogeneous, it suffices 
to prove \eqref{stabilityestimate} for 
every $u \in \mathcal B$. By contradiction, suppose that for every $\eta \geq 0$ there exists 
a sequence $\{u_m\}_{m\in\mathbb N} \subset \mathcal B$ such 
that 
\begin{equation}
    \label{BE contradiction ass}
    \frac{\mathcal Q_{g,k}(u_m) - \mathcal Y}{d(u_m, \mathcal{M}_{g,k})^{2 +\eta}} \to 0 
    \qquad \text{ as } \quad m \to \infty. 
\end{equation}
By diagonal extraction of a subsequence, we may actually assume that \eqref{BE contradiction ass} 
holds for every $\eta \geq 0$ with 
the \emph{same} sequence $(u_m)$.

Since $d(u_m, \mathcal{M}_{g,k}) \leq 1$, \eqref{BE contradiction ass} implies that 
$\mathcal Q_{g,k}(u_m) - \mathcal Y \to 0$ as $m \to \infty$. 
By Lemma \ref{lemma compactness minseq}, there exists $v \in \mathcal{M}_{g,k}^*$ such that, 
up to extracting a further subsequence, one has $u_m \to v$ strongly in $W^{k,2}(M)$. 

Now, let $\delta = \delta(v) > 0$ and $\gamma = \gamma(v) \geq 0$ be as in 
Proposition \ref{proposition local bound}. Since 
$\mathcal{M}_{g,k}^* \cap \mathcal B(v, \delta) \subset \mathcal{M}_{g,k}$, the 
definitions \eqref{dist} and \eqref{dist delta} 
of $d$ and $d_{v, \delta}$ directly imply $d_{v, \delta}(u_m, \mathcal{M}_{g,k}^*) 
\geq d(u_m, \mathcal{M}_{g,k})$. Hence,  
by \eqref{BE contradiction ass} applied with $\eta = \gamma$, it follows
\[ \frac{\mathcal Q_{g,k}(u_m) - \mathcal Y}{d_{v, \delta}(u_m, 
\mathcal{M}_{g,k}^*)^{2 + \gamma}} \leq \frac{\mathcal Q_{g,k}(u_m) - 
\mathcal Y}{d(u_m, \mathcal{M}_{g,k})^{2 + \gamma}} \to 0 \quad \text{ as } \quad m \to \infty. \]
But since $u_m \in \mathcal B(v, \delta)$ for every $m\gg1$ large enough, this 
contradicts Proposition \ref{proposition local bound}. 
Hence \eqref{BE contradiction ass} is false, and the theorem is proven. 
    \end{proof}

    \begin{proof}[Proof of Corollary~\ref{quant_stability_metrics_thm}]
   By Theorem \ref{quant_stability_thm} and the definition \eqref{norm 2*} of $\|\cdot\|$ we have 
   \begin{eqnarray*}
      \mathcal Q_{g,k}(u) - \mathcal Y & \geq&  c \left( \frac{\inf_{v \in \mathcal{M}_{g,k}} 
      \|u - v\|_{W^{k,2}(M)}}{\|u\|_{W^{k,2}(M)}} 
      \right)^{2 + \gamma}\\
      &\geq&\Tilde{c}\left( \frac{\inf_{v \in \mathcal M_{g,k}} \|u - v\|_{L^{\frac{2n}{n-2k}}(M)}}
      {\|u\|_{L^{2_k^*}(M)}} \right)^{2 + \gamma} \\
   &=& \left(\frac{\inf \{\|h-\tilde{g}\|: \tilde{g} \in \mathcal{M}_{g,k}\}}
        {\operatorname{vol}_h(M)^{\frac{n-2k}{2n}}}\right)^{2+\gamma}.    
   \end{eqnarray*} 
   For the second inequality, note that if $d(u, \mathcal{M}_{g,k}) \leq \delta_0$ (for some 
   appropriately small $\delta_0 > 0$), 
   then $\|u\|_{W^{k,2}(M)}\simeq\|u\|_{L^{{2_k^*}(M)}}$, 
   so the inequality follows from Sobolev's 
   inequality. If  $d(u, \mathcal{M}_{g,k}) > \delta_0$ on the other hand, it suffices to take 
   $\Tilde{c}$ sufficiently small because 
   \[\inf_{v \in \mathcal{M}_{g,k}} \|u - v\|_{L^{2_k^*}(M)} \|u\|_{L^{2_k^*}(M)}^{-1} \leq 1.\]  

   Similarly, by Theorem \ref{quant_stability_thm} and the definition of $\|\cdot \|_*$ in 
   \eqref{norm gjms} we have 
 \begin{eqnarray*}
       \mathcal{Q}_{g,k}(u)  - \mathcal Y &\geq&  c \left( \frac{\inf_{v \in \mathcal{M}_{g,k}} 
       \|u - v\|_{W^{k,2}(M)}}{\|u\|_{W^{k,2}(M)}} 
       \right)^{2 + \gamma}\\
       &\gtrsim&\left( \frac{\inf_{v \in \mathcal{M}_{g,k}} (\int_M (u-v) P_{g,k} (u-v) \, 
       \ud \mu_g)^{1/2}}{\|u\|_{L^{2_k^*}(M)}} \right)^{2 + \gamma} \\
   &=&  \left(\frac{\inf \{\|h-\tilde{g}\|_*: \tilde{g} \in \mathcal{M}_{g,k}\}}
        {\operatorname{vol}_h(M)^{\frac{n-2k}{2n}}}\right)^{2+\gamma}.    
   \end{eqnarray*} 
   Here we have used $\int_M (u-v) P_{g,k} (u-v)\ud\mu_g \lesssim\|u-v\|_{W^{k,2}(M)}^2$, which holds 
   because $P_{g,k}$ is a 
   $k$-th order elliptic differential operator. Moreover, since by assumption $\mathcal Q_{g,k}(u) 
   - \mathcal Y \leq \delta_0$, 
   Theorem \ref{quant_stability_thm} yields 
   \[d(u, \mathcal{M}_{g,k}) \lesssim\delta_0^\frac{1}{2 + \gamma}.\]
   Thus, for $0<\delta_0\ll1$, one has
   $\|u\|_{W^{k,2}(M)}\simeq \|u\|_{L^{2_k^*}(M)},$
   which proves that the 
   inequality above holds.
    \end{proof}

     \subsection{Generic nondegeneracy}
    Now, let us prove the genericity part of our main result.
    Following ideas in \cite{MR2560131,MR2982783,MR4314216,MR2401291}, our strategy is to verify 
    hypotheses (i), (ii), and (iii) of the abstract transversality result in Lemma~\ref{lm:henry}.
    To apply Lemma~\ref{lm:henry} we must examine the second variation of the functional 
    $\mathcal{Q}_{g,k}$ among \emph{all} admissible metrics, not just those conformal to a 
    fixed metric $g_0$. To this end, we recall the machinery developed by Case, Lin and Yuan
    in \cite{CaseLinYuan}. In their language, the $k$-th-order $Q$-curvature is an example of 
    a conformal variational Riemannian invariant of weight $-2k$. 
    
    \begin{lemma}\label{lm:genericity}
        Let $n \in \mathbb{N}$ with $n>2k$. Let $M$ be a smooth, closed, $n$-dimensional 
        manifold. 
    %    Let $n,k\in\mathbb N$ with $n>2k$ and let $(M,g_0)$ be a smooth, closed, $n$-dimensional Riemannian manifold. 
        The set 
        \begin{equation*}
            \mathcal{G}:=\{g\in \mathfrak{A}_{k,\alpha}(M) : \gamma(g)=0\}\subset \mathfrak{A}_{k,\alpha}(M)
        \end{equation*}
        is open and dense with respect to the $\mathcal{C}^{k,\alpha}-$topology for some $\alpha\in (0,1)$.
    \end{lemma}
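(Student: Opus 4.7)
The plan is to apply the abstract transversality result of Lemma \ref{lm:henry} by treating the metric $g$ as the parameter. Fix $\alpha \in (0,1)$ and a reference metric $g_0 \in \mathfrak{A}_{k,\alpha}(M)$; let $V \subset \mathfrak{A}_{k,\alpha}(M)$ be an open $\mathcal{C}^{k,\alpha}$-neighborhood of $g_0$. Set $X = \mathcal{C}^{2k,\alpha}(M)$, $Z = \mathcal{C}^{0,\alpha}(M)$, and let $U \subset X$ be an open set of strictly positive functions with $\|u\|_{L^{2_k^*}(M)} = 1$. Consider
$$\mathcal{F}: V \times U \longrightarrow Z, \qquad \mathcal{F}(g,u) = P_{g,k}(u) - \tfrac{n-2k}{2}\,\mathcal{Q}_{g,k}(u)\, u^{\frac{n+2k}{n-2k}}.$$
The zero set $\mathcal{F}(g,\cdot)^{-1}(0)$ contains $\mathcal{CQC}_{g,k}^*$ and in particular $\mathcal{M}_{g,k}^*$. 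The partial linearization $\partial_u \mathcal{F}(g,u_0):X\to Z$ agrees (up to constants) with the Jacobi operator $L_{u_0}$ of Section \ref{sec:funct_regularity} and, being elliptic of order $2k$, is Fredholm of index $0<1$, which verifies hypothesis (i) of Lemma \ref{lm:henry}.

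The heart of the proof is hypothesis (ii), namely that $0$ is a regular value of $\mathcal{F}$. At any zero $(g,u_0)$, $\partial_u\mathcal{F}(g,u_0)$ is formally self-adjoint, so its cokernel is isomorphic to its kernel $K=\ker L_{u_0}$. Surjectivity of the full differential $D\mathcal{F}(g,u_0)$ thus reduces to showing that the metric derivative $\partial_g \mathcal{F}(g,u_0)$ covers $K$ modulo $\mathrm{Im}\,L_{u_0}$. Here I invoke the framework of Case, Lin and Yuan \cite{CaseLinYuan}: they establish that $Q_{g,k}$ is a conformal variational Riemannian invariant of weight $-2k$ and derive an explicit formula for the variation of $P_{g,k}(u_0)$ and $\mathcal{Q}_{g,k}(u_0)$ along a symmetric $(0,2)$-tensor perturbation $h$ of $g$. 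Writing $\partial_g \mathcal{F}(g,u_0)[h] = \mathcal{D}_{u_0,g}\,h$ for the resulting linear differential operator in $h$, I argue by contradiction: if $0\neq \varphi\in K$ satisfies $\int_M \varphi\,\mathcal{D}_{u_0,g}(h)\,\ud\mu_g = 0$ for every admissible symmetric 2-tensor $h$, then integration by parts yields $\mathcal{D}_{u_0,g}^*\varphi = 0$ pointwise; this, combined with $L_{u_0}\varphi = 0$ and unique continuation for the elliptic operator $P_{g,k}$, forces $\varphi\equiv 0$, a contradiction. This mirrors the transversality method employed in \cite{MR2560131,MR2982783,MR4314216}. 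Hypothesis (iii) ($\sigma$-properness) follows by stratifying $\mathcal{F}^{-1}(0)$ according to $\mathcal{C}^{2k,\alpha}$-bounds on $u$, together with Schauder elliptic estimates and the compactness of minimizing sequences provided by Lemma \ref{lemma compactness minseq}.

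Applying Lemma \ref{lm:henry} yields that the set of $g\in V$ for which $0$ is a regular value of $\mathcal{F}(g,\cdot)$ is open and dense in $V$. For such $g$ every element of $\mathcal{M}_{g,k}^*$ is a non-degenerate critical point of $\mathcal{Q}_{g,k}$, whence $\gamma(g)=0$ by the remark following Proposition \ref{proposition local bound}. Covering $\mathfrak{A}_{k,\alpha}(M)$ by such neighborhoods produces the density of $\mathcal{G}$. Openness of $\mathcal{G}$ is established independently from the compactness of $\mathcal{M}_{g,k}^*$ (Lemma \ref{lemma compactness minseq}) combined with continuous dependence of $L_{u_0}$ on $g$ in the $\mathcal{C}^{k,\alpha}$-topology: non-degenerate solutions persist under small perturbations of $g$ via the implicit function theorem. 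The main obstacle is the verification of hypothesis (ii): one must read off $\partial_g P_{g,k}$ along non-conformal directions from the Case--Lin--Yuan identities and then execute a unique-continuation argument to conclude triviality of $\varphi$. This is substantially more delicate than the $k=1$ case, where Anderson's strategy \cite{MR3336633} relies on the Obata classification unavailable in the higher-order setting.
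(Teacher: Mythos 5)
Your overall scaffolding coincides with the paper's: both proofs feed the problem into the transversality result of Lemma \ref{lm:henry} with the metric as parameter, verify Fredholmness of index zero for $\partial_u\mathcal F$, check that $0$ is a regular value, establish $\sigma$-properness, and then conclude $\gamma(g)=0$ from non-degeneracy of every minimizer via Step~2 of Proposition \ref{proposition local bound}. (The paper works with the integral/Green's-function formulation of $\mathcal F$ on $W^{k,2}$ rather than the PDE formulation on H\"older spaces, so that $\partial_u\mathcal F$ is a compact perturbation of the identity; this is a cosmetic difference.)

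However, there is a genuine gap in your verification of hypothesis (ii), which you yourself identify as the heart of the matter. You argue that if $0\neq\varphi\in\ker L_{u_0}$ annihilates the image of $\partial_g\mathcal F$, then "integration by parts yields $\mathcal D^*_{u_0,g}\varphi=0$ pointwise," and that this "combined with $L_{u_0}\varphi=0$ and unique continuation for the elliptic operator $P_{g,k}$ forces $\varphi\equiv 0$." Two problems. First, the passage from the integral identity $\int_M \varphi\,\mathcal D_{u_0,g}(h)\,\ud\mu_g=0$ for all $h$ to usable pointwise information is not automatic: one must plug in carefully chosen perturbations $h$ (localized in normal coordinates, and separately the purely conformal ones $h=\varphi g$), which is precisely the content of the argument the paper imports from \cite[Lemma~12]{MR2982783}; your sketch skips this. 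Second, and more seriously, strong unique continuation is \emph{not} an available tool for the higher-order operators $P_{g,k}$ with $k\geq 2$ (it is a delicate and largely open issue for general $2k$-th order elliptic operators), so the step that is supposed to force $\varphi\equiv 0$ has no justification in the range of $k$ the lemma covers. The paper avoids unique continuation entirely: testing against conformal perturbations $h=\varphi g$ yields $f_{n,k}(u)\,\psi=0$ almost everywhere, and since $f_{n,k}(u)=c_{n,k}u^{\frac{n+2k}{n-2k}}>0$ (as $u>0$ is an admissible conformal factor), one concludes $\psi\equiv 0$ immediately. As a minor additional point, your separate openness argument via persistence of non-degenerate solutions is both redundant (openness is part of the conclusion of Lemma \ref{lm:henry}) and, as stated, addresses the wrong direction — persistence of non-degenerate solutions does not by itself exclude the appearance of new degenerate solutions under perturbation.
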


    \begin{proof}
        In Step 2 of the proof of Proposition \ref{proposition local bound}, we have seen 
        that $\gamma(g) = 0$ if 
        $\ker D^2 \mathcal Q_{g,k}(v) = \{0\}$ for every $v \in \mathcal M^*_{g,k}$.  Thus it suffices to show that the (potentially) smaller set 
        \[ \{ g \in \mathfrak A_{k, \alpha}(M) \, : \ker D^2 \mathcal Q_{g,k}(v) = \{0\} \text{ for every } v \in \mathcal M^*_{g,k}  \}  \subset \mathcal G 
        \]
        is open and dense. 

        We recall that $G_{g,k,\xi}$ is the Green's function of the GJMS operator 
        $P_{g,k}$ with its pole at $\xi \in M$, and that $G_{g,k,\xi} > 0$ if $g$ is 
        an admissible metric. Now we fix $g_0\in \mathfrak{A}_{k,\alpha}(M)$ and define 
        $$\mathcal{F}: \mathfrak{A}_{k,\alpha} (M) \times W^{k,2}_{g_0}(M) \rightarrow W^{k,2}_{g_0}(M),$$
        by
         $$\mathcal{F} (g,u) (\xi) = u(\xi) - \int_M G_{g,k,\xi} (y) f_{n,k} (u(y)) \ud\mu_g(y),$$
        where $\displaystyle f_{n,k}(u) = c_{n,k} u^{\frac{n+2k}{n-2k}}$. Observe that 
        the zero set of $\mathcal{F}$ consists of the pairs $(g,u)$ such that 
        $\displaystyle \widetilde g = u^{{4}/({n-2k})} g$ has constant $k$-th-order $Q$-curvature, 
        {\it i.e.} $u$ solves the PDE 
        \begin{flalign}\label{zeropde}\tag{$\mathcal{P}_{g,k}$}
            P_{g,k}(u)-f_{n,k}(u)=0 \quad {\rm on} \quad M .
        \end{flalign}
        
        These solutions are also critical points of the functional 
        \begin{align*}
            \mathcal{E}_{g,k}(u)&= \frac12 \int_{M}uP_{g,k}(u)\ud\mu_{g}-F_{n,k}(u),
        \end{align*}
        where $F_{n,k}(u)=\int_{0}^{u}f_{n,k}(s)\ud s$.
        Furthermore, the set of metrics such that $\ker D^2 \mathcal{Q}_{g,k} = \{ 0 \}$ 
        are those admissible metrics such that every conformal constant $Q$-curvature metric $\displaystyle \widetilde g = u^{{4}/({n-2k})} g$ 
        is nondegenerate in the sense that if $\mathcal{F}(g,u) = 0$, then any $v\in W^{k,2}_{g_0}(M)$
        such that 
          \begin{flalign}\label{zeropdelinearized}\tag{$\mathcal{L}^\prime_{g,k,u}$}
            L_{g,k,u} (v) = P_{g,k}(v)-f_{n,k}^{\prime}(u)v=0 \quad {\rm on} \quad M .
        \end{flalign}
        must be the zero function itself. 

        We have now reduced our problem to showing that the set 
        $$\widetilde {\mathcal{G}} = \{ g \in \mathfrak{A}_{k,\alpha} (M) : 
        \mathcal{F}(g,u) = 0 \; {\rm and} \; {\rm ker}(L_{g,k,u})= \{0\}\} $$
        is open and dense. It is helpful now to discuss the regularity of $\mathcal{F}$ and 
        describe some of its partial derivatives. We've already shown that for each 
        $g\in \mathfrak{A}_{k,\alpha}(M)$ the mapping $u \mapsto \mathcal{F}(g,u)$ 
        is $\mathcal{C}^2$, and we computed the second derivative explicitly in 
        Lemma \ref{lem:functional_regularity}. Case, Lin and Yuan \cite{CaseLinYuan} proved 
        that for each $u> 0$ the mapping $g \mapsto \mathcal{F}(g,u)$ is also 
        $\mathcal{C}^2$. 
        In their language, the map 
        $$g \mapsto Q_{g,k} = \frac{2}{n-2k} P_{g,k} (1)$$ 
        is a conformally variational Riemannian invariant of weight $-2k$ with a conformal 
        primitive $\mathcal{E}_{g,k}(1)$. Furthermore, they denote the linearization of 
        $g \mapsto Q_{g,k}$ by 
        \begin{equation}\label{metriclinearization}
            \Gamma_g (h) = \left . \frac{\ud}{\ud t} \right |_{t=0} \frac{2}{n-2k} P_{g+th, k} (1).
        \end{equation}
        This operator has a formal adjoint $\Gamma_g^*$, determined by 
        $$\int_M \langle \Gamma_g^*(f), h \rangle \ud\mu_g = \int_M f\Gamma_g(h) \ud\mu_g; $$
        it is this formal adjoint that is the (Frechet) derivative of $\mathcal{F}$ with 
        respect to $g$, namely
        $$\Gamma^*_g = \partial_g\mathcal{F}.$$
    
        In \cite[Section 8.1]{CaseLinYuan} they show this 
        functional is variationally stable, which in the notation established above means 
        \begin{equation} \label{var_stable} 
        \ker (D^2 (g \mapsto \mathcal{E}_{g,k} (1)) =: T_g\mathfrak{K} = \left\{ Y \in 
        \mathcal{C}^\infty(M) : Y= \frac{n}{2} \operatorname{div}(X) 
        \; \textrm{ for some} \; X \in \mathcal{K}(g) \right\}. 
        \end{equation} 
        Here $\mathcal{L}_X g$ is the Lie derivative of the metric tensor $g$ in the 
        direction of $X$ and $\mathcal{K}(g)$  is the space of conformal Killing fields on $(M,g)$, 
        {\it i.e.} the space of vector fields whose flows act as one-parameter families of 
        global conformal diffeomorphisms. Recall that $X\in \mathcal{K}(g)$ if and only 
        if $\mathcal{L}_X g= \psi g$ for some function $\psi$. Evaluating the trace of 
        both sides of this equation shows that multiple 
        $\psi$ must be $\frac{2}{n} \operatorname{div}(X)$. 
        In summary, the only variations of $\mathcal{F}$ with respect to the 
        metric preserving 
        the functional $\mathcal{E}_{g,k}(1)$ at a critical point are those arising from global 
        conformal diffeomorphisms. Given that solution space of the PDE \eqref{zeropde} 
        is conformally invariant this is the smallest one asks for the kernel to be. In 
        any case, this kernel is always finite-dimensional and by \cite[Theorem 7.4]{BCS} it is 
        generically trivial. 

        We complete our proof by verifying the hypotheses of Lemma~\ref{lm:henry}.
        It follows from the analysis in \cite[Section 2.1]{MR3073449} that the  
        transversality map
        $\mathcal{F}:\mathfrak{A}_{k,\alpha}(M) \times W^{k,2}_{g_0}(M) \rightarrow 
        W^{k,2}_{g_0}(M)$ is of class $\mathcal{C}^1$. We must work slightly more to show 
        $\mathcal{F}(g,\cdot)$ is Fredholm and has index zero for each $g \in \mathfrak{A}_{k,\alpha}(M)$. 
        Let $H^k_g(M)$ be the completion of $\mathcal{C}^\infty(M)$ with respect to the 
        inner product 
        \begin{equation}\label{innerproductnondeg}
            (( u,v ))_g = \int_M u P_{g,k} (v) \ud\mu_g.
        \end{equation}
        Standard computations show that the Hilbert subspaces $H^k_g(M)$ and $W^{k,2}_g(M)$ of 
        $\mathcal{C}^{\infty}(M)$ are equivalent, 
        and so the canonical inclusion $W^{k,2}_g(M) \hookrightarrow H^k_g(M)$ is an isomorphism of Banach spaces. 
        The same holds for the inclusion $W^{k,2}_{g^{\prime}}(M) \hookrightarrow H^k_g(M)$ for any $g^{\prime} \in 
        \mathfrak{A}_{k,\alpha}(M)$ (see \cite[Proposition 2.2]{MR1688256}).     
        By the Kondrakov theorem, the canonical inclusion $i_{g}: H^k_g(M) \rightarrow L^{2_k^*}_g(M)$ is compact, 
        and we let $i_g^*$ be its adjoint with respect to the canonical isomorphism 
        $(L^p_g(M))^\prime \simeq L_g^{p^{\prime}}(M)$ with $p^{\prime}=\frac{p}{p-1}$. 
        In fact, we can specify $i_g^*$ by the 
        relation 
        $$((i_g^* u,v))_g = \int_M u v \ud\mu_g \quad \text{for each } \quad v \in L^{2_k^*}_g(M).$$
        
        Now define the Nemytskii operator
        $$\mathcal{N}_{n,k} : W^{k,2}_g(M) \rightarrow L^{\frac{2n}{n+2k}}_g(M),$$
        by
        $$
        \mathcal{N}_{n,k} (u) = f_{n,k}(u) = c_{n,k} u^{\frac{n+2k}{n-2k}},$$
        which is a compact operator because $g\in\mathfrak{A}_{k,\alpha}(M)$ is an admissible metric. 
        The inclusion $W^{k,2}_g(M) \hookrightarrow L^{\frac{2n}{n+2k}}_g(M)$ given 
        by the Sobolev embedding shows $\mathcal{N}_{n,k}$ is $\mathcal{C}^1$ and its Frechet derivative is 
        $$\ud\mathcal{N}_{n,k}[u](v) = \frac{n+2k}{n-2k} c_{n,k} u^{\frac{4k}{n-2k}} v.$$
        In addition, direct computation now yields 
        $$\partial_u \mathcal{F} (v) = v - \left(i_g^* \circ  \ud\mathcal{N}_{n,k}[u]\right)(v). $$
        Since the composition $i_g^* \circ \ud\mathcal{N}_{n,k}[u]$ is compact, we have just demonstrated that 
        $\mathcal{F}(g,\cdot)$ is Fredholm and has index zero. We now conclude that the following 
        four conditions are equivalent: 
        \begin{itemize}
        \item[{\rm (i)}] $0$ is a regular value of $\mathcal F(g,\cdot)$;
        \item[{\rm (ii)}]$\partial_u \mathcal F(g, u) = \text{id} - (i_g^* \circ f'_{n,k})(u)$ is injective for 
        every solution $u$ to \eqref{zeropde};
        \item[{\rm (iii)}] $v\equiv 0$ is the only solution to \eqref{zeropdelinearized} for every solution $u$ to \eqref{zeropde};
        \item[{\rm (iv)}] every solution $u\in W^{k,2}_{g_0}(M)$ to \eqref{zeropde} is non-degenerate.
        \end{itemize}

       We now verify that $0$ is a regular value of $\mathcal F$, which is one of the assumptions 
       of Lemma \ref{lm:henry}. To do so, let $(g,u) \in \mathfrak{A}_{k,\alpha} (M) \times 
       W^{k,2}_{g_0}(M)$ such 
        that $\mathcal{F}(g,u) = 0$ and let $v \in W^{k,2}_{g_0}(M)$. We need to find 
        a symmetric $2$-tensor $h \in \Sigma^2(TM)$ and a function $w\in W^{k,2}_{g_0}(M)$ 
        such that 
        \begin{equation}
        \label{regular value}
          \left. \partial_g \mathcal F\right |_{(g,u)} (h) + 
        \left. \partial_u \mathcal F \right |_{(g,u)} (w) = v. 
        \end{equation}
        
        Since $\partial_u \mathcal F(g,u)$ is a self-adjoint compact perturbation of the identity, 
        we have the orthogonal decomposition
        \[ W_{g_0}^{k,2}(M) = \ker \left. \partial_u \mathcal F \right |_{(g,u)} 
        \oplus \operatorname{Im} \left. \partial_u \mathcal F \right |_{(g,u)},   \]
        and that $\dim \ker \left. \partial_u \mathcal F \right |_{(g,u)} < \infty$. 

       Let $\Pi: W^{k,2}_{g_0}(M) \to \ker \left. \partial_u \mathcal F \right |_{(g,u)}$ be the projection onto 
       $\ker \left. \partial_u \mathcal F \right |_{(g,u)}$. We first claim that $\Pi \left. \partial_g 
\mathcal F \right |_{(g,u)}$ is surjective onto $\ker \left. \partial_u \mathcal F \right |_{(g,u)}$. Indeed, 
if this is not the case, there is $0 \not 
\equiv \psi \in \ker \left. \partial_u \mathcal F \right |_{(g,u)}$ such that 
\begin{equation}\label{regular1new}
            0=((\partial_g\mathcal{F}(\cdot,u)[h]),\psi))_g=
            ((\Gamma_g^*(h), \psi))_g \quad {\rm for \ all} 
            \quad h \in {\rm Sym}^{k,\alpha}(M).
        \end{equation}
where $((,))_g$ is the inner product given by \eqref{innerproductnondeg}.
        A direct computation shows that \eqref{regular1new} can be reformulated as 
        \[\int_M \left[\Gamma^*_g (h)(\nabla u, \nabla \psi)-\frac{1}{2}\left(\operatorname{tr}_g 
        h\right)f_{n,k}(u)
        \psi \right]\ud\mu_g=0,\]
        where $\Gamma^*_g (h)$ is given by \eqref{metriclinearization}.

        Following \cite[Lemma~12]{MR2982783} and using normal coordinates centered at arbitrary $x \in M$ and specific perturbations of $g$, one can prove that this last displayed equation implies 
        \begin{equation}\label{regular2new}
            \langle\nabla u, \nabla \psi\rangle_g= %\Gamma_g (h)(\nabla u, \nabla \psi)=
            0 \quad \mu_g \text {-a.e.} 
            \quad \text{for each }h \in {\rm Sym}^{k,\alpha}(M).
        \end{equation}
        By taking $h=\varphi g$ for arbitrary $\varphi \in \mathcal{C}^{\infty}(M)$ we find that \eqref{regular1new} 
        and \eqref{regular2new} imply $f_{n,k}(u) \psi=0$ almost everywhere on $M$ with respect to $\mu_g$. 
       However, since $f_{n,k}(u) = c_{n,k} u^\frac{n+2k}{n-2k} > 0$ on $M$, this implies that $\psi \equiv 0$ on $M$, 
       which is a contradiction. Thus we have shown that $\Pi \left. \partial_g 
\mathcal F \right |_{(g,u)}$ is surjective onto $\ker \left. \partial_u \mathcal F \right |_{(g,u)}$. 
Consequently, there is $h \in \Sigma^2(TM)$ such that $\left. \partial_g 
\mathcal F \right |_{(g,u)}(h) = v_1 + z$, for some $z \in \operatorname{Im} \left. \partial_u \mathcal F \right |_{(g,u)}$.
Moreover, since $v_2 - z  \in \operatorname{Im} \left. \partial_u \mathcal F \right |_{(g,u)}$, there is $w \in 
W^{k,2}_{g_0}(M)$ such that $\left. \partial_u \mathcal F \right |_{(g,u)}(w) = v_2 - z$. Thus, $h$ and $w$ 
satisfy \eqref{regular value}. We conclude that $0$ is a regular value of $\mathcal F$.

        Moreover, the same argument as in \cite[ Lemma 11]{MR2982783} and \cite[Lemma 4.2]{MR2560131} shows 
        that the map 
        $\pi_1\circ\iota: \mathcal{F}^{-1}(0)\rightarrow W^{k,2}_{g_0}(M)$ is $\sigma$-proper. 
        Here $\iota: \mathcal{F}^{-1} (0) \rightarrow 
        W^{k,2}_{g_0}(M) \times \mathfrak{A}_{k,\alpha}(M)$ is the canonical embedding and 
        $\pi_1 : W^{k,2}_{g_0}(M) \times \mathfrak{A}_{k,\alpha}(M)\rightarrow W^{k,2}_{g_0}(M)$ is the 
        projection onto the first factor. 

        Finally, putting all this information together, we see the hypotheses of Lemma~\ref{lm:henry} all hold, and so 
         \begin{equation*}
			\left\{g\in \mathfrak{A}_{k,\alpha}(M) : \text{$0$ is a regular value of } \mathcal F(g,\cdot) \right\} = 
           \mathcal{G}
		\end{equation*}
        is an open, dense set subset $\mathfrak{A}_{k,\alpha}(M)$ with respect to the $\mathcal{C}^{k,\alpha}$-topology. 
    \end{proof}

\section{Cubic degenerate stability for $k=2$ (proof of Theorem \ref{AS3_examples})} \label{sec:cubic_stable}

In this section, we construct examples of manifolds satisfying the Adams--Simon integrability condition in Definition~\ref{def:adams-simon}.
Our approach follows closely the ones in \cite[Section 5]{MR3352243}.

\subsection{Products with spheres} \label{sec:prod_spheres}

In this section we prove that certain products of Einstein manifolds with 
a standard round sphere are degenerate critical points of $\mathcal{Q}_{h,2}$ 
satisfying the $\mathrm{AS}_3$ condition.  
\begin{proposition}
\label{prop:gamma=1 m=2 sphere}
    Let $\ell,m \in \mathbb{N}$ with $\ell \geq 2$, let $\lambda \in \R$, 
    let $(M,g)$ be an $m$-dimensional $\lambda$-Einstein manifold,
    and let $(X,h) = (M \times \Ss^\ell, g \oplus \overset{\circ}{g})$. If $m\gg1$ is 
    sufficiently large, then there exist two real numbers $\lambda_- < \lambda_+$ 
    such that the function $u\equiv 1$ is a degenerate critical point of the 
    functional $\mathcal{Q}_{h,2}$ satisfying the $\mathrm{AS}_3$ condition. If $\ell 
    \in \{ 2,3,4,5\}$ then $\lambda_- < 0 < \lambda_+$, but if $\ell \geq 6$ then 
    $\lambda_- < \lambda_+ < 0$.
\end{proposition}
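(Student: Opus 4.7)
The plan is to reduce the degeneracy of the critical point $u \equiv 1$ to a quadratic equation in the Einstein constant $\lambda$ obtained by testing the linearized operator against spherical harmonics lifted from $\mathbb{S}^\ell$. Since both $(M,g)$ and $(\mathbb{S}^\ell, \overset{\circ}{g})$ are Einstein, the product metric $h$ has parallel Ricci tensor with $\Ric_h|_M = \lambda g$ and $\Ric_h|_{\mathbb{S}^\ell} = (\ell-1) \overset{\circ}{g}$; consequently $R_h = m\lambda + \ell(\ell-1)$ and $|\Ric_h|^2 = m\lambda^2 + \ell(\ell-1)^2$ are constants on $X$, and $\Delta_h R_h = 0$. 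Formula \eqref{Q curvature definition} then shows $Q_{h,2}$ is a constant depending only on $(m,\ell,\lambda)$, so by Lemma \ref{lem:functional_regularity} the function $u \equiv 1$ is a critical point of $\mathcal{Q}_{h,2}$ whose Hessian kernel on $T_1 \mathcal{B}$ coincides with $\ker L_1$, where $n = m + \ell$ and
\[ L_1 := P_{h,2} - \frac{n+4}{n-4} Q_{h,2}. \]

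For each $k \in \mathbb{N}$, let $\psi_k$ be a degree-$k$ spherical harmonic on $\mathbb{S}^\ell$ lifted to $X$, so that $\Delta_h \psi_k = -\nu_k \psi_k$ with $\nu_k = k(k+\ell-1)$ and $\nabla_h \psi_k$ is tangent to $\mathbb{S}^\ell$ and an eigenvector of $\Ric_h$ with eigenvalue $\ell - 1$. Substituting into the Paneitz formula \eqref{paneitz definition}, the divergence term collapses to $c(\lambda) \Delta_h \psi_k$ for some $c(\lambda)$ affine in $\lambda$, giving
\[ P_{h,2} \psi_k = \bigl[ \nu_k^2 - c(\lambda) \nu_k + \tfrac{n-4}{2} Q_{h,2}(\lambda)\bigr] \psi_k. \]
Because $Q_{h,2}(\lambda)$ is a quadratic polynomial in $\lambda$, the degeneracy condition $L_1 \psi_k = 0$ becomes
\[ \mathcal{E}_k(\lambda) := \nu_k^2 - c(\lambda) \nu_k - \Bigl(\tfrac{n+4}{n-4} - \tfrac{n-4}{2}\Bigr) Q_{h,2}(\lambda) = 0, \]
which is a quadratic in $\lambda$. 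Fixing $k = 2$, I explicitly compute the three coefficients of $\mathcal{E}_2(\lambda)$ as rational functions of $(m,\ell)$ and show that for $m \gg 1$ the discriminant is strictly positive, producing two distinct real roots $\lambda_- < \lambda_+$. A Vieta-type analysis of the constant and leading coefficients of $\mathcal{E}_2(\lambda)$, performed uniformly in $\ell$, shows that the signs of $\lambda_\pm$ undergo a transition at $\ell = 6$: for $\ell \in \{2,3,4,5\}$ one has $\lambda_- < 0 < \lambda_+$, while for $\ell \geq 6$ both roots are negative.

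To verify the $\mathrm{AS}_3$ condition at either $\lambda_\pm$, observe that $\ker L_1$ contains every degree-$2$ spherical harmonic $\psi_2 \in T_1 \mathcal{B}$ (positive-degree spherical harmonics have mean zero). Choosing $\psi_2(y) = y_1^2 - \frac{1}{\ell+1}$ on $\mathbb{S}^\ell \subset \R^{\ell+1}$ and evaluating the standard Euclidean moments $\int_{\mathbb{S}^\ell} y_1^{2j} \ud \overset{\circ}{g}$ yields
\[ \int_{\mathbb{S}^\ell} \psi_2^3 \ud \overset{\circ}{g} = \frac{|\mathbb{S}^\ell|}{(\ell+1)^3}\left[\frac{15(\ell+1)^2}{(\ell+3)(\ell+5)} - \frac{9(\ell+1)}{\ell+3} + 2\right] \neq 0 \]
for every $\ell \geq 2$. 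By \eqref{AS3} it follows that
\[ q_3(\psi_2) = C_{n,2} \, \mathcal{Q}_{h,2}(1) \, \Vol_g(M) \int_{\mathbb{S}^\ell} \psi_2^3 \ud \overset{\circ}{g} \neq 0, \]
so the order of integrability at $u \equiv 1$ equals $p = 3$. Since $p = 3$ is odd, the remark following Definition \ref{def:adams-simon} then guarantees that $\mathrm{AS}_3$ holds automatically.

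The main technical obstacle is the explicit computation of the coefficients of $\mathcal{E}_2(\lambda)$ and the accompanying sign analysis. The formulas \eqref{paneitz definition} and \eqref{Q curvature definition} involve several dimension-dependent rational coefficients which couple nontrivially with the $(m,\ell,\lambda)$-dependence of $R_h$ and $|\Ric_h|^2$; verifying both that the discriminant remains positive for $m \gg 1$ and that the sign transition occurs exactly at $\ell = 6$ demands a careful asymptotic expansion in $m$ with uniform control in $\ell$. Moreover, since $(X,h)$ is only parallel-Ricci and not Einstein, one must additionally check that no spurious resonance $(\mu_a, \nu_k)$ with $\mu_a > 0$ (corresponding to a non-constant $\Delta_M$-eigenmode) contributes to the kernel at $\lambda = \lambda_\pm$, and this is precisely the source of the dimensional restriction $m \gg 1$ in the statement.
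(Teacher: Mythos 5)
Your overall strategy is the same as the paper's: use that $Q_{h,2}$ is constant so $u\equiv 1$ is a critical point, test the linearized operator against a degree-two spherical harmonic lifted from $\Ss^\ell$, reduce degeneracy to a quadratic equation in $\lambda$, analyze its discriminant and root signs for $m\gg 1$, and certify $\mathrm{AS}_3$ via \eqref{AS3} together with $\int v^3\neq 0$ and the oddness of $p=3$. However, there is a genuine error at the very step on which the whole proposition rests: the linearization at $u\equiv 1$. Since $P_{h,2}(1)=\frac{n-4}{2}Q_{h,2}$, the constant in the Euler--Lagrange equation is $c=\frac{n-4}{2}Q_{h,2}$, and the Hessian kernel is $\ker\bigl(P_{h,2}-\frac{n+4}{n-4}c\bigr)=\ker\bigl(P_{h,2}-\frac{n+4}{2}Q_{h,2}\bigr)$, exactly as in \eqref{kernel_condition1}; equivalently the zeroth-order coefficient of the degeneracy equation is $\frac{n-4}{2}Q_{h,2}-\frac{n+4}{2}Q_{h,2}=-4Q_{h,2}$. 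Your $L_1:=P_{h,2}-\frac{n+4}{n-4}Q_{h,2}$ conflates the pointwise curvature $Q_{h,2}$ with the constant $c$ (equivalently with the normalized functional value), and your displayed condition $\nu_k^2-c(\lambda)\nu_k-\bigl(\frac{n+4}{n-4}-\frac{n-4}{2}\bigr)Q_{h,2}(\lambda)=0$ carries a factor of order $\frac{n-4}{2}\sim\frac m2$ in front of $Q_{h,2}$ where the correct equation has the fixed factor $-4$. This is not cosmetic: with $Q_{h,2}\approx(\frac m8-\frac12)\lambda^2+\frac{\ell(\ell-1)}{4}\lambda+\mathcal O(\frac1m)$, the correct quadratic has leading coefficient $\approx-\frac m2<0$ and constant term $\approx-\ell^3+4\ell^2+9\ell+4$ (the paper's \eqref{quadratic_coeff1}), which is what produces two real roots and the sign change exactly at $\ell=6$; your quadratic instead has leading coefficient $\approx+\frac{m^2}{16}$ and a positive constant term, so (to the extent it has real roots at all) both roots have the same sign for every $\ell\geq 2$, and the claimed transition at $\ell=6$ cannot be recovered. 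Moreover, the discriminant positivity and the $\ell=6$ threshold are only asserted in your write-up, whereas this explicit coefficient computation is precisely the content of the proposition.

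Two smaller points. First, your explanation of the hypothesis $m\gg 1$ via ``spurious resonances'' from $M$-modes is off target: extra kernel directions would not spoil $\mathrm{AS}_3$, since the order of integrability is $p=3$ (odd) as soon as $q_3\not\equiv 0$ on the kernel; the largeness of $m$ is needed to make the discriminant of the (correct) quadratic positive, as in \eqref{shifted_paneitz1}--\eqref{quadratic_coeff1}. Second, your choice $\psi_2=y_1^2-\frac{1}{\ell+1}$ in place of the paper's harmonic $x_1x_2+x_2x_3+x_3x_1$ is perfectly acceptable, but the displayed moment formula has a slip (the middle term should be $\frac{9(\ell+1)^2}{\ell+3}$, not $\frac{9(\ell+1)}{\ell+3}$); the nonvanishing of $\int_{\Ss^\ell}\psi_2^3$ itself is correct.
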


\begin{proof}  Following \eqref{AS3} and the 
analysis in \cite[Section 5]{MR3352243}, our goal now 
is to produce a nontrivial test function $v\in W^{2,2}_h(X)$ such that 
\begin{equation}\label{kernel_condition1}
v \in \Lambda_0 = \ker \left ( P_{h,2} - \frac{m+\ell+4}{2} Q_{h,2} \right ).
\end{equation}

Recall that the round metric on the sphere is an Einstein metric with 
Einstein constant of the round sphere $\ell - 1$. %\textcolor{red}{(add reference?)}. 
Direct computation shows that since $\Ric_h = \lambda g \oplus (\ell-1) \overset{\circ}{g}$, one has
$$R_h = m\lambda + \ell(\ell-1),\quad {\rm and} \quad |\Ric_h|^2 = m \lambda^2 + \ell (\ell-1)^2, $$
which we substitute into \eqref{Q curvature definition} to see 
\begin{eqnarray} \label{sphere_prod_q_curv} 
Q_{h,2} & = & -\frac{2}{(m+\ell-2)^2} |\Ric_h|^2 + \frac{(m+\ell)^3 - 4(m+\ell)^2 
+ 16 (m+\ell) - 16}{8(m+\ell-1)^2(m+\ell-2)^2} R_h^2 \\ \nonumber 
& = & -\frac{2(m\lambda^2 + \ell(\ell-1)^2)}{(m+\ell-2)^2} + \frac{((m+\ell)^3 
- 4(m+\ell)^2 + 16(m+\ell) - 16)(m\lambda + \ell(\ell-1))^2}{8(m+\ell-1)^2(m+\ell-2)^2} 
\\ \nonumber 
&^ = & \left ( \frac{m}{8} - \frac{1}{2} \right ) \lambda^2 + \frac{\ell(\ell-1)}{4} \lambda 
+ \mathcal{O}\left ( \frac{1}{m} \right ).
\end{eqnarray} 
Using these expressions for $\Ric_h$, $R_h$ and $Q_{h,2}$ in \eqref{paneitz definition} we 
find 
\begin{eqnarray} \label{shifted_paneitz1} 
\left ( P_{h,2} - \frac{n+4}{2} Q_{h,2} \right ) & = & \Delta_h^2 + 
\frac{4}{m+\ell-2} (\lambda \Delta_g + (\ell-1)\Delta_{\overset{\circ}{g}} ) \\ \nonumber 
&& - \frac{((m+\ell-2)^2+4)(m\lambda + \ell(\ell-1))}{2(m+\ell-1)(m+\ell-2)} (\Delta_g + 
\Delta_{\overset{\circ}{g}} )  \\ \nonumber 
&& - \left ( \left (\frac{m}{2} - 2\right )\lambda^2 + \ell(\ell-1) \lambda 
+ \mathcal{O}\left ( \frac{1}{m} \right ) \right ) \\ \nonumber 
&= & \Delta_g^2 + \Delta_g \circ \Delta_{\overset{\circ}{g}} 
+ \Delta_{\overset{\circ}{g}} \circ \Delta_g + \Delta_{\overset{\circ}{g}}^2 \\ \nonumber 
&& + \left ( -\frac{m\lambda}{2} + 2\lambda - \frac{\ell(\ell-1)}{2} + \mathcal{O}
\left ( \frac{1}{m} \right ) \right ) (\Delta_g + \Delta_{\overset{\circ}{g}}) \\ \nonumber 
&&  - \left ( \left (\frac{m}{2} - 2\right )\lambda^2 + \ell(\ell-1) \lambda 
+ \mathcal{O}\left ( \frac{1}{m} \right ) \right ) .
\end{eqnarray} 

For our test function we choose $v = 1 \otimes \widetilde v$, where 
$$\widetilde{v} = x_1 x_2 + x_2 x_3 + x_3 x_1 $$
is a homogeneous, harmonic polynomial of degree two restricted to the sphere. This is 
an eigenfunction of $-\Delta_{\overset{\circ}{g}}$ with eigenvalue $2(\ell+1)$. As in \cite[p.5]{Koenig2023BLMS}, observe that 
$$\int_X v^3 \ud\mu_h = \operatorname{vol}_g(M) \int_{\Ss^n} \widetilde{v}^3 
\ud\mu_{\overset{\circ}{g}} \neq 0.$$ 
Substituting this choice of $v$ into \eqref{shifted_paneitz1} we obtain 
$$\left ( P_{h,2}- \frac{n+4}{2} Q_{h,2} \right ) v = (a(m,\ell)\lambda^2 + b(m,\ell)\lambda 
+ c(m,\ell)) v,$$
where 
\begin{eqnarray}\label{quadratic_coeff1}
a(m,\ell) & = & -\frac{m}{2} + 2 + \mathcal{O}\left ( \frac{1}{m} \right ) \\ \nonumber 
b(m,\ell) & = & -m(\ell+1) -\ell^2+5\ell +4 + \mathcal{O}\left ( \frac{1}{m} \right ) \\ \nonumber 
c(m,\ell) & = & -\ell(\ell^2-1) + 4(\ell+1)^2  + \mathcal{O}\left ( \frac{1}{m} 
\right ) \\ \nonumber 
& = & -\ell^3 + 4\ell^2 + 9 \ell + 4 + \mathcal{O} \left ( \frac{1}{m} \right ) .
\end{eqnarray} 
Thus we complete our proof by showing that, provided $m\gg1$ is sufficiently large, 
the quadratic polynomial $\mathfrak{p}(\lambda)=a\lambda^2 + b\lambda + c$ has two real roots. The discriminant is 
$${\rm disc}_\lambda(\mathfrak{p})=b^2 - 4ac = m^2 (\ell+1)^2 + \mathcal{O} (m) ,$$
which is indeed positive for $m$ sufficiently large, proving the existence of the 
two real roots. To see that one root is positive and the other is negative  when 
$\ell \in \{ 2,3,4,5 \}$, observe that $a(m,\ell) < 0$ for $m$ sufficiently large, whereas 
$c(m,\ell) >0$ for $m$ sufficiently large and $\ell \in \{ 2,3,4,5\}$. This in turn implies the 
quadratic is positive when $\lambda =0$ and negative when $|\lambda|\gg1$ is sufficiently 
large, giving us one positive root and one negative root by the intermediate value theorem. 
\end{proof} 
 
\subsection{Products with complex projective space} \label{sec:prod_CP}

We repeat the analysis of the previous section using complex 
projective space instead of a sphere. 
\begin{proposition}
\label{prop:gamma=1 m=2 CP}
    Let $\ell,m \in \mathbb{N}$ with $\ell \geq 2$, let $\lambda \in \R$, 
    let $(M,g)$ be an $m$-dimensional $\lambda$-Einstein manifold,
    and let $(\widetilde X,\widetilde h) = (M \times \mathbb{CP}^\ell, 
    g \oplus g_{\rm FS})$, where $g_{\rm FS}$ is the Fubini-Study metric. If $m\gg1$ is 
    sufficiently large, then there exist two real numbers $\lambda_- < \lambda_+$ 
    such that the function $u\equiv 1$ is a degenerate critical point of the 
    functional $\mathcal{Q}_{h,2}$ satisfying the $\mathrm{AS}_3$ condition. If $\ell 
    \in \{ 2,3\}$ then $\lambda_- < 0 < \lambda_+$, but if $\ell \geq 4$ then 
    $\lambda_- < \lambda_+ < 0$.
\end{proposition}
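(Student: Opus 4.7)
The plan is to mirror the structure of the proof of Proposition \ref{prop:gamma=1 m=2 sphere}, adjusting all quantities for the fact that $(\mathbb{CP}^\ell, g_{\rm FS})$ is an Einstein manifold of \emph{real} dimension $2\ell$ with Einstein constant $2(\ell+1)$ (in the standard normalization where the holomorphic sectional curvatures lie in $[1,4]$). Thus on $(\widetilde X, \widetilde h)$ we have $\dim \widetilde X = m + 2\ell$, and
$$
 \Ric_{\widetilde h} = \lambda g \oplus 2(\ell+1) g_{\rm FS}, \qquad R_{\widetilde h} = m\lambda + 4\ell(\ell+1), \qquad |\Ric_{\widetilde h}|^2 = m\lambda^2 + 8\ell(\ell+1)^2.
$$
Plugging these into the formula \eqref{Q curvature definition} gives an explicit expression for $Q_{\widetilde h,2}$, which combined with \eqref{paneitz definition} yields an expansion
\begin{equation*}
P_{\widetilde h, 2} - \tfrac{m+2\ell+4}{2} Q_{\widetilde h, 2} = \Delta_g^2 + 2 \Delta_g \circ \Delta_{g_{\rm FS}} + \Delta_{g_{\rm FS}}^2 + A(m,\ell,\lambda) (\Delta_g + \Delta_{g_{\rm FS}}) + B(m,\ell,\lambda),
\end{equation*}
entirely analogous to \eqref{shifted_paneitz1}, with leading-order behavior in $m$ determined by the same mechanism as in the sphere case.

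Next, I would select a test function of the form $v = 1 \otimes \widetilde v$, where $\widetilde v$ is a first nontrivial eigenfunction of $-\Delta_{g_{\rm FS}}$. The spectrum of $-\Delta_{g_{\rm FS}}$ on $\mathbb{CP}^\ell$ is well known: the first nonzero eigenvalue is $4(\ell+1)$, with eigenspace consisting of functions $\widetilde v([z]) = |z|^{-2} z^* A z$ for trace-free Hermitian $A \in \mathrm{Mat}_{\ell+1}(\mathbb{C})$. For a suitable choice of $A$ (e.g., a diagonal matrix whose eigenvalues are not symmetric about zero) one verifies $\int_{\mathbb{CP}^\ell} \widetilde v^3 \, \ud\mu_{g_{\rm FS}} \neq 0$, which both guarantees that $v$ is a legitimate candidate for detecting the $\mathrm{AS}_3$ condition (via the formula \eqref{AS3}) and ensures the nondegeneracy condition on the Hessian kernel can be set up as in the sphere case.

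Applying the shifted Paneitz operator to $v$ reduces to a polynomial identity: since $\widetilde v$ is a single-eigenspace element, we obtain
$$
 \bigl(P_{\widetilde h,2} - \tfrac{m+2\ell+4}{2} Q_{\widetilde h, 2}\bigr) v = \mathfrak{p}(\lambda)\, v, \qquad \mathfrak{p}(\lambda) = \widetilde a(m,\ell)\lambda^2 + \widetilde b(m,\ell)\lambda + \widetilde c(m,\ell),
$$
whose coefficients can be read off exactly as in \eqref{quadratic_coeff1}, but with the spherical eigenvalue $2(\ell+1)$ and its associated Ricci/scalar contributions replaced by their Fubini–Study counterparts $4(\ell+1)$, $2(\ell+1)$, and $4\ell(\ell+1)$. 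The thresholds $\ell \in \{2,3\}$ versus $\ell \geq 4$ in the statement reflect the point at which the sign of $\widetilde c(m,\ell)$ (for large $m$) transitions from positive to negative, which is where the analysis differs quantitatively from the sphere case.

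The main obstacle I anticipate is simply the careful bookkeeping of the $m \to \infty$ asymptotic expansions of $\widetilde a, \widetilde b, \widetilde c$: one must verify that $\mathrm{disc}_\lambda(\mathfrak{p}) = \widetilde b^2 - 4 \widetilde a \widetilde c = m^2 \cdot [\text{positive constant depending on } \ell] + \mathcal{O}(m)$ is positive for $m$ large so that $\mathfrak p$ has two real roots $\lambda_\pm$, and then to track the sign of $\widetilde c$ to identify the transition at $\ell = 4$. Once these computations are in place, the intermediate value theorem argument, together with the nontriviality of $\int v^3 \, \ud\mu_{\widetilde h}$, shows that $u \equiv 1$ is a degenerate critical point of $\mathcal{Q}_{\widetilde h,2}$ satisfying the $\mathrm{AS}_3$ condition for both values $\lambda = \lambda_\pm$, completing the proof.
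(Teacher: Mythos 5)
Your overall strategy coincides with the paper's: compute $\Ric_{\widetilde h}$, $R_{\widetilde h}$ and $|\Ric_{\widetilde h}|^2$ for the product metric, insert them into \eqref{Q curvature definition} and \eqref{paneitz definition} to obtain the shifted operator analogous to \eqref{shifted_paneitz2}, apply it to $v = 1\otimes \widetilde v$ with $\widetilde v$ a Laplace eigenfunction on $\mathbb{CP}^\ell$ having $\int_{\mathbb{CP}^\ell}\widetilde v^3\,\ud\mu_{g_{\rm FS}}\neq 0$, and study the resulting quadratic $\mathfrak p(\lambda)$ as $m\to\infty$. The paper does exactly this, with the explicit choice $\widetilde v = z_1\overline{z}_2+z_2\overline{z}_1+\dots$ and the nonvanishing of the cubic integral taken from the reference \cite{Kro20}, whereas you argue via unitary invariance and $\operatorname{tr}(A^3)\neq 0$; that part of your plan is fine and essentially equivalent.

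The gap is in the part you defer as ``bookkeeping,'' because that is precisely where the quantitative content of the statement lives. The dichotomy $\ell\in\{2,3\}$ versus $\ell\geq 4$ is decided by the sign of the constant coefficient $\widetilde c(m,\ell)$ for large $m$, and this sign depends on the exact eigenvalue $\mu$ of the test function entering the computation. You take the first nonzero eigenvalue $\mu = 4(\ell+1)$, while the paper's computation is carried out with a test function satisfying $-\Delta_{g_{\rm FS}}\widetilde v = (8\ell+16)\widetilde v$, which is what produces $\widetilde c = -16\ell^3+16\ell^2+224\ell+256+\mathcal O(1/m)$ in \eqref{quadratic_coeff2} and hence the claimed sign change in $\ell$. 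Repeating the paper's computation with $\mu=4(\ell+1)$ instead gives a constant term $\mu^2 - 2\ell(\ell+1)\mu + \mathcal O(1/m) = 8(\ell+1)^2(2-\ell)+\mathcal O(1/m)$, whose sign pattern does not reproduce the stated thresholds (it is already nonpositive at leading order for every $\ell\geq 2$). So while your plan does yield two real roots $\lambda_\pm$ for $m$ large (the discriminant is of order $m^2\mu^2/4>0$ whatever $\mu$ is), and therefore degenerate $\mathrm{AS}_3$ critical points for some values of $\lambda$, it does not, as written, establish the asserted signs of $\lambda_\pm$: you must either justify the eigenvalue/normalization actually used in the paper's computation or redo the sign analysis of $\widetilde c$ with your value of $\mu$ and verify which dichotomy in $\ell$ really results.
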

We recall that the Fubini-Study metric is the unique metric on $\mathbb{CP}^\ell$ 
making the natural quotient $\Ss^{2\ell+1} \rightarrow \mathbb{CP}^\ell$ a Riemannian 
submersion. 

\begin{proof} In this setting, we use \eqref{AS3} to see that our goal is to 
produce a nontrivial test function $v$ with 
\begin{equation} \label{kernel_condition2}
v \in \ker \left ( P_{\widetilde{h},2} - \frac{m+2\ell + 4}{2} Q_{\widetilde{h},2} \right ). 
\end{equation}
The Fubini-Study metric on $\mathbb{CP}^\ell$ is an Einstein metric with 
Einstein constant $\lambda= 2(\ell + 1)$. Once again, since $\Ric_{\widetilde h} = \lambda g \oplus 2(\ell+1)g_{\rm FS}$, 
direct computations implies 
$$R_{\widetilde h} = m\lambda + 4\ell(\ell+1) \quad {\rm and} \quad |\Ric_{\widetilde h} |^2 = 
m\lambda^2 + 8\ell(\ell+1)^2,$$
and so \eqref{Q curvature definition} yields 
\begin{eqnarray*} 
Q_{\widetilde h, 2} & = & -\frac{2}{(m+2\ell-2)^2} |\Ric_{\widetilde h} |^2 + 
\frac{(m+2\ell)^3 - 4(m+2\ell)^2 + 16 (m+2\ell) - 16)}{8(m+2\ell-1)^2 (m+2\ell-2)^2}
R_{\widetilde h}^2 \\ 
&  = & \frac{-2m\lambda^2 - 16 \ell(\ell+1)^2}{(m+2\ell-2)^2} + 
\frac{((m+2\ell)^3-4(m+2\ell)^2+16(m+2\ell) - 16)(m\lambda +4\ell(\ell+1))^2}
{8(m+2\ell-1)^2(m+2\ell-2)^2} \\ 
& = & \left ( \frac{m}{8} - \frac{1}{2} \right ) \lambda^2 + \ell(\ell+1)\lambda 
+ \mathcal{O} \left ( \frac{1}{m} \right ). 
\end{eqnarray*} 
Substituting this into \eqref{paneitz definition} we then obtain 
\begin{eqnarray} \label{shifted_paneitz2} 
P_{\widetilde h, 2} - \frac{n+4}{2} Q_{\widetilde h, 2} & = & \Delta_{\widetilde h}^2 
+ \frac{4}{m+2\ell-2} (\lambda \Delta_g + 2(\ell+1) \Delta_{g_{\rm FS}} ) \\ \nonumber 
&& -  \frac{((m+2\ell-2)^2 + 4)(m\lambda + 4\ell(\ell+1))}
{2(m+2\ell-1)(m+2\ell -2)} (\Delta_g + \Delta_{\rm g}) - 4Q_{\widetilde h, 2} \\ \nonumber 
& = & \Delta_g^2 + \Delta_g \circ \Delta_{g_{\rm FS}} + \Delta_{g_{\rm FS}} \circ 
\Delta_g + \Delta_{g_{\rm FS}}^2 \\ \nonumber 
&& + \left ( -\frac{m\lambda}{2} + 2\lambda - 2\ell(\ell+1) 
+ \mathcal{O}\left ( \frac{1}{m} \right ) \right ) (\Delta_g + \Delta_{g_{\rm FS}}) 
\\ \nonumber 
&& + \left ( -\frac{m}{2} + 2 \right ) \lambda^2 - 4\ell(\ell+1) \lambda + 
\mathcal{O} \left ( \frac{1}{m} \right ) .
\end{eqnarray} 

Once again we choose a test function of the form $v = 1 \otimes \widetilde{v}$, where 
$\widetilde{v}$ is an eigenfunction of $\Delta_{g_{\rm FS}}$. In this case we choose 
$$\widetilde{v} (z, \overline{z}) = z_1 \overline{z_2} + z_2 \overline{z_1} + z_2 \overline{z_3}
+ z_3 \overline{z_2} + z_3 \overline{z_1} + z_1 \overline{z_3}, $$
which satisfies the equation $-\Delta_{g_{\rm FS}} \widetilde{v} = (8\ell + 16) \widetilde{v}$. 
Furthermore, by \cite[page 25]{Kro20} we have 
$$\int_{\widetilde{X}} v^3 \ud\mu_{\widetilde{h}} = \operatorname{vol}_g (M) 
\int_{\mathbb{CP}^\ell} \widetilde{v}^3 \ud\mu_{g_{\rm FS}} \neq 0,$$
so $v$ is indeed nontrivial. Substituting this choice of $v$ into \eqref{shifted_paneitz2} we obtain 
$$\left ( P_{\widetilde h,2} - \frac{n+4}{2} Q_{\widetilde h, 2} \right ) v = 
(\widetilde a (m,\ell) \lambda^2 + \widetilde b (m,\ell) \lambda + \widetilde c(m,\ell) ) v=:\mathfrak{p}(\lambda),$$
where
\begin{eqnarray} \label{quadratic_coeff2}
\widetilde a (m,\ell) & = & -\frac{m}{2} + 2 + \mathcal{O} \left ( \frac{1}{m} \right ) 
\\ \nonumber 
\widetilde b (m,\ell) & = & -4m(\ell+2) -4(\ell^2-3\ell-8) + \mathcal{O} 
\left ( \frac{1}{m} \right ) \\ \nonumber 
\widetilde c (m,\ell) & = & -16 \ell^3 +16 \ell^2 + 224 \ell + 256 + \mathcal{O} \left ( 
\frac{1}{m} \right ). 
\end{eqnarray} 
We see directly that the discriminant is 
$${\rm disc}(\mathfrak{p})=\widetilde b^2 - 4 \widetilde a \widetilde c = 16 m^2 + \mathcal{O} (m),$$
which is once again positive so long as $m$ is sufficiently large. This once 
more proves that the quadratic polynomial $\mathfrak{p}(\lambda)=\widetilde a \lambda^2 + \widetilde b \lambda 
+ \widetilde c$ has two real roots. We also see directly from \eqref{quadratic_coeff2}
that $\widetilde a<0$ provided $m$ is sufficiently large, and that $\widetilde c > 0$ 
for $\ell=2,3$. By the same argument as in the previous case, we see that when $\ell=2,3$
we obtain one positive and one negative root, while when $\ell \geq 4$ we 
have two negative roots. 
\end{proof} 

\section{Quartic degenerate stability for arbitrary integer $1 \leq k < \frac{n}{2}$ (proof of Theorem \ref{AS4_examples})}
\label{sec:quartic_stable}

In this section, we give the proof of Theorem \ref{AS4_examples}. Before we start with the main argument, several preparations are needed.  

\subsection{Preliminaries: Explicit formulas for $P_{g,k}$ and $\tau_0$}

It is crucial for our purpose to provide and expression for the operator $P_{g,k}$ efficiently and explicitly for arbitrary order $k \geq 1$. This is done 
in the following lemma when $M_\tau:=\mathbb S^1(\tau) \times \mathbb S^{n-1}$ with $\tau > 0$ is furnished with the product metric $g\in {\rm Met}^k(M)$ defined as
\begin{equation}\label{productmetric}
   g=g_{\mathbb S^1(\tau)}\oplus g_{\mathbb S^{n-1}}.
\end{equation} 
Notice that the radius $\tau > 0$ of the circle $\mathbb S^1(\tau)$ is allowed to be arbitrary and the 
resulting formulas \eqref{Lmj} and \eqref{Pgm emden fowler trafo} do not depend on $\tau$. 

\begin{lemma}
    \label{lemma Pgm}
    Let $n,k\in\mathbb N$ 
         with $n>2k$ and let $u\in \mathcal{C}^k(M_\tau)$ be of the form $u(t, \omega) = f(t) Y_j(\omega)$.
         Then, one has 
\begin{equation}
\label{Pg in channel j}
    P_{g,k} u(t, \omega) = (\mathcal L_{k, j} f)(t) Y_j(\omega)  \quad {\rm on } \quad M_\tau,
\end{equation}
where
\begin{equation}
    \label{Lmj}
    \mathcal L_{k, j}  = \prod_{\ell = 1}^k \left(-\partial_t^2 + \left(j + \frac{n}{2} + k - 2 \ell\right)^2 \right).
\end{equation}
Here $(t, \omega) \in \mathbb S^1(\tau) \times \mathbb S^{n-1}$ and $Y_j$ is a spherical harmonic of $\mathbb S^{n-1}$, {\it i.e.} it is an 
eigenfunction of the Laplace-Beltrami operator $(-\Delta)_{\mathbb S^{n-1}}$ with eigenvalue $\mu_j = j(j + n - 2)$. 
Moreover, for any $u \in \mathcal{C}^{2k}(M_\tau)$, let $\Tilde{u} \in \mathcal{C}^{2k}(\R \times \mathbb S^{n-1})$ be its $2 \pi \tau$-periodic extension and 
let $\hat{u} \in \mathcal{C}^{2k}(\R^n \setminus \{0\})$ be defined by 
\begin{equation}
    \label{emden-fowler}
    \hat{u}(x) = |x|^{-\frac{n-2k}{2}} \Tilde{u} \left(\ln |x|, \omega \right) \quad {\rm with} \quad \omega=\frac{x}{|x|}.
\end{equation} 
Then, it holds
\begin{equation}
\label{Pgm emden fowler trafo}
     P_{g,k} u(t, \omega) = e^{\frac{n+2k}{2} t} ((-\Delta)^k \hat{u}) (e^t \omega) \quad {\rm for} \quad (t,\omega) \in 
     M_\tau.
\end{equation}
Finally, the Green's function $G_\tau: M_\tau \times M_\tau \to \R$ is given by 
\begin{equation}
    \label{Green's Pgm}
    G_\tau(t, \omega, s, \eta) = c_{n,k} \sum_{m \in \mathbb Z} | \cosh(t - s - 2 \pi m \tau) - \langle \omega, \eta \rangle |^{-\frac{n-2k}{2}},
\end{equation}
where $c_{n,k} > 0$ is a normalizing dimensional constant and $\langle \cdot, \cdot \rangle$ denotes the scalar product in $\R^n$. 
\end{lemma}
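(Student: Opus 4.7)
The central idea is that the cylindrical metric $g_c := dt^2 + g_{\mathbb S^{n-1}}$ on $\mathbb R \times \mathbb S^{n-1}$ is conformal to the flat metric $\delta$ on $\mathbb R^n\setminus\{0\}$ via the Emden--Fowler diffeomorphism $\Phi(t,\omega)=e^t\omega$. Since $P_{g,k}$ is a local differential operator and $M_\tau$ is locally isometric to the cylinder, every identity derived on the cylinder transfers to $M_\tau$ by passing to the $2\pi\tau$-periodic lift $\widetilde u$. My plan is to establish \eqref{Pgm emden fowler trafo} first from conformal covariance, then to deduce \eqref{Lmj} by direct spectral computation, and finally to obtain \eqref{Green's Pgm} by conformally transporting the Riesz kernel and summing over periods.

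For \eqref{Pgm emden fowler trafo}, I compute $\Phi^*\delta = e^{2t}(dt^2 + g_{\mathbb S^{n-1}})$, so that $g_c = u^{4/(n-2k)}\Phi^*\delta$ with $u(t) = e^{-(n-2k)t/2}$. Applying the conformal covariance \eqref{conv_inv_law} to this pair, together with $P_{\delta,k}=(-\Delta)^k$, yields
\[
 P_{g_c,k}(\phi)(t,\omega) = e^{\frac{n+2k}{2}t}\bigl[(-\Delta)^k\widehat\phi\bigr](e^t\omega),
\]
where $\widehat\phi(x) = |x|^{-(n-2k)/2}\phi(\ln|x|,x/|x|)$. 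Since $P_{g,k}$ is local, the same identity holds on $M_\tau$ with the $2\pi\tau$-periodic extension $\widetilde u$ of $u$, which gives \eqref{Pgm emden fowler trafo}.

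For \eqref{Lmj}, I specialize to $\widehat u(x) = r^{-(n-2k)/2} f(\ln r) Y_j(\omega)$ and compute by direct differentiation that, for any exponent $\beta$,
\[
 -\Delta\bigl(r^{-\beta} g(\ln r) Y_j\bigr) = r^{-\beta-2}\Bigl[-g''-(n-2-2\beta)g' + \bigl(j(j+n-2)-\beta(\beta-n+2)\bigr)g\Bigr] Y_j.
\]
Iterating this formula $k$ times with $\beta_\ell = (n-2k)/2 + 2(\ell-1)$ for $\ell=1,\dots,k$ turns $(-\Delta)^k \widehat u$ into $r^{-(n+2k)/2}(\widetilde{\mathcal L}_{k,j} f) Y_j$, where $\widetilde{\mathcal L}_{k,j}$ is a composition of $k$ second-order constant-coefficient operators in $t$. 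The factorization is then reduced to the algebraic identity $\widetilde{\mathcal L}_{k,j} = \prod_{\ell=1}^k\bigl(-\partial_t^2 + (j+n/2+k-2\ell)^2\bigr)$, which is verified by observing that consecutive factors share the same zeroth-order term (since it depends on $\beta_\ell$ only through $\beta_\ell(\beta_\ell-n+2)$, which is symmetric under $\beta_\ell\leftrightarrow n-2-\beta_\ell$), so the first-order $\partial_t$-contributions cancel pairwise, leaving the required product of clean second-order symbols. Combined with \eqref{Pgm emden fowler trafo}, this yields \eqref{Lmj} and hence \eqref{Pg in channel j}.

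For \eqref{Green's Pgm}, I use the standard conformal transformation law for the Green's function of a GJMS operator: if $\widetilde g = u^{4/(n-2k)} g$, then $G_{\widetilde g,k}(p,q) = u(p)^{-1} u(q)^{-1} G_{g,k}(p,q)$. Taking $g=\delta$, whose Green's function is the Riesz kernel $G_{\delta,k}(x,y)=c_{n,k}|x-y|^{-(n-2k)}$, and $u(x)=|x|^{-(n-2k)/2}$, gives
\[
 G_{g_c,k}(t,\omega,s,\eta) = c_{n,k}\left(\frac{|x||y|}{|x-y|^2}\right)^{(n-2k)/2}, \qquad x=e^t\omega,\ y=e^s\eta.
\]
Since $|x-y|^2 = 2 e^{t+s}(\cosh(t-s)-\langle\omega,\eta\rangle)$, this simplifies to a constant times $(\cosh(t-s)-\langle\omega,\eta\rangle)^{-(n-2k)/2}$ after absorbing $2^{-(n-2k)/2}$ into $c_{n,k}$. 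The Green's function on $M_\tau$ is obtained by $2\pi\tau$-periodization in the $t$-variable, which yields \eqref{Green's Pgm}; absolute convergence of the sum is immediate from the bound $(\cosh(2\pi m\tau))^{-(n-2k)/2}\lesssim e^{-(n-2k)\pi|m|\tau}$ as $|m|\to\infty$. The main obstacle in the whole argument is the algebraic simplification in Step 2: verifying that the iterated product $\widetilde{\mathcal L}_{k,j}$ of non-self-adjoint second-order operators telescopes into the clean product in \eqref{Lmj}; this requires some care for general $k$, although for $k\in\{1,2\}$ it is easily checked by hand and the general case follows by induction or, alternatively, by transporting the known factorization of GJMS operators on $(\mathbb S^n,\overset{\circ}{g})$ via stereographic projection.
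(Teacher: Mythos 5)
Your proposal is correct, but it runs the argument in the opposite order from the paper and with different inputs. The paper first obtains the channel-wise factorization \eqref{Lmj} by quoting the explicit product formula for $P_{g,k}$ on $\mathbb S^1\times\mathbb S^{n-1}$ from Case's work (the operators $D_{L,j}$), factoring each $D_{L,j}$ by hand and bookkeeping the indices; it then deduces the Emden--Fowler identity \eqref{Pgm emden fowler trafo} from \eqref{Lmj} together with a cited lemma of Egorov on the transformation of $(-\Delta)^k$, and finally gets \eqref{Green's Pgm} by a direct change-of-variables computation with the Riesz kernel in the representation formula. You instead prove \eqref{Pgm emden fowler trafo} first, using only diffeomorphism-naturality and the conformal covariance \eqref{conv_inv_law} of $P_{g,k}$ together with $P_{\delta,k}=(-\Delta)^k$ (the cylinder metric being $e^{-2t}\Phi^*\delta$), then derive \eqref{Lmj} by iterating the radial--spherical Laplacian identity, and obtain the Green's function by the conformal transformation rule for Green's kernels plus periodization. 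Your route is more self-contained (it avoids the citations to Case's product formula and Egorov's lemma) at the price of having to carry out the algebraic telescoping yourself; I checked that it does work, with one small correction: the factors whose first-order terms cancel are not \emph{consecutive} ones but the pair $\ell\leftrightarrow k+1-\ell$, since $\beta_{k+1-\ell}=n-2-\beta_\ell$; this is harmless because the operators are constant-coefficient in $t$ and hence commute, and the middle factor (for $k$ odd) has vanishing first-order term and is already of the clean form $-\partial_t^2+(j+\tfrac n2-1)^2$. For the Green's function, your periodized kernel is the same as the paper's; to be fully rigorous one should note (as the paper's explicit computation does implicitly) that the periodized kernel indeed inverts $P_{g,k}$ on $M_\tau$, which follows from the absolute convergence bound you state together with term-wise differentiation away from the diagonal.
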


Since $L^2(M_\tau)$ is spanned by functions of the form $u(t, \omega) = f(t) Y_j(\omega)$, \eqref{Pg in channel j} completely 
describes the action of $P_{g,k}$ on $L^2(M_\tau)$. 
Indeed, fix an orthonormal basis $(Y_{j,l})_{l\in\{1,...,N_j\}}$ of the $N_j$-dimensional space of spherical harmonics 
of degree $j$ on $\mathbb S^{n-1}$, namely \[N_j=\frac{(2j+n-2)(j+n-3)!}{(n-2)!j!}.\]
Then, the functions $\mathfrak{a}_{m,j,l}(t) = \cos(\frac{mt}{\tau}) Y_{j,l}(\omega)$ and 
$\mathfrak{b}_{m,j,l} \sin(\frac{mt}{\tau}) Y_{j,l}(\omega)$ form an orthogonal basis 
\begin{equation}
    \label{onb spher harm}
    (\mathfrak{a}_{m,j,l}, \mathfrak{b}_{m,j,l})_{(m,j,l) \in \mathbb N_0\times \mathbb N_0\times\in\{1,...,N_j\}} 
\end{equation}
of $L^2(M_\tau)$. Through Lemma \ref{lemma Pgm}, we can express the action of $P_{g,k}$ on this basis.

\begin{corollary}
\label{corollary Pgk eigenvalues}
Let $n,k\in\mathbb N$ 
         with $n>2k$. For every $\tau > 0$ and $m, j \in \mathbb N_0$, one has
\[ P_{g,k} (\mathfrak{a}_{m,j}) = \alpha_{m,j}(\tau) \mathfrak{a}_{m,j} \quad {\rm and }  \quad P_{g,k} (\mathfrak{b}_{m,j}) = \alpha_{m,j}(\tau) \mathfrak{b}_{m,j} \quad {\rm on} \quad M_\tau\]
for the eigenvalues 
\begin{equation}
    \label{alpha k j definition}
    \alpha_{m,j}(\tau) =  \prod_{\ell = 1}^k \left(m^2 \tau^{-2} + (j + \frac{n}{2} + k - 2 \ell)^2 \right). 
\end{equation} 
\end{corollary}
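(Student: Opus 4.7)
The plan is to deduce Corollary \ref{corollary Pgk eigenvalues} as an immediate specialization of Lemma \ref{lemma Pgm}. The basis functions $\mathfrak{a}_{m,j,l}(t,\omega) = \cos(mt/\tau) Y_{j,l}(\omega)$ and $\mathfrak{b}_{m,j,l}(t,\omega) = \sin(mt/\tau) Y_{j,l}(\omega)$ are manifestly of the product form $u(t,\omega) = f(t) Y_j(\omega)$ to which Lemma \ref{lemma Pgm} applies, with $f(t) = \cos(mt/\tau)$ or $f(t) = \sin(mt/\tau)$. These choices of $f$ are $2\pi\tau$-periodic in $t$, hence well-defined on $\mathbb S^1(\tau)$, and $Y_{j,l}$ is by construction an eigenfunction of $-\Delta_{\mathbb S^{n-1}}$ with eigenvalue $\mu_j = j(j+n-2)$.

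The key computation is that both $\cos(mt/\tau)$ and $\sin(mt/\tau)$ are eigenfunctions of $-\partial_t^2$ with the same eigenvalue $m^2/\tau^2$. Consequently, for each $\ell \in \{1, \dots, k\}$, the factor $-\partial_t^2 + (j + \tfrac{n}{2} + k - 2\ell)^2$ appearing in \eqref{Lmj} acts on $f$ as multiplication by the scalar $m^2\tau^{-2} + (j + \tfrac{n}{2} + k - 2\ell)^2$. Since these factors commute (they are all polynomials in $-\partial_t^2$), iterating over $\ell = 1, \dots, k$ yields
\[
\mathcal{L}_{k,j} f = \prod_{\ell=1}^k \left( m^2 \tau^{-2} + \bigl(j + \tfrac{n}{2} + k - 2\ell\bigr)^2 \right) f = \alpha_{m,j}(\tau) f.
\]

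Combining this with \eqref{Pg in channel j} gives $P_{g,k}(\mathfrak{a}_{m,j,l}) = \alpha_{m,j}(\tau) \mathfrak{a}_{m,j,l}$ and analogously for $\mathfrak{b}_{m,j,l}$, which is exactly the claim. There is no real obstacle here: once Lemma \ref{lemma Pgm} has been established, the corollary is just the observation that the trigonometric basis on $\mathbb S^1(\tau)$ simultaneously diagonalizes each factor $-\partial_t^2 + c^2$ of $\mathcal{L}_{k,j}$, so the product operator acts by multiplication by the product of the corresponding scalars.
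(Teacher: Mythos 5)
Your proposal is correct and follows exactly the paper's argument: the paper likewise deduces the corollary immediately from \eqref{Pg in channel j} and \eqref{Lmj} together with the fact that $-\partial_t^2$ acts on $\cos(mt/\tau)$ and $\sin(mt/\tau)$ as multiplication by $m^2\tau^{-2}$, so that each factor of $\mathcal L_{k,j}$ contributes the corresponding scalar. No gaps.
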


\begin{proof}
    This is an immediate consequence of \eqref{Pg in channel j} and \eqref{Lmj} together with the fact that 
    $-\partial_t^2 \cos(\frac{mt}{\tau}) = m^2 \tau^{-2} \cos(\frac{mt}{\tau})$ and $-\partial_t^2 \sin(\frac{mt}{\tau}) 
    = m^2 \tau^{-2} \sin(\frac{mt}{\tau})$. 
\end{proof}

\begin{proof}[Proof of Lemma \ref{lemma Pgm}]
We let $u(t,\omega) = f(t) Y_j(\omega)$ and write $\mu_j = j(j + n -2)$ for the $j$-th eigenvalue of the Laplace-Beltrami 
operator $(-\Delta)_{\mathbb S^{n-1}}$. 

Furthermore, by \cite[Theorem 1.1]{Case2023}, we have
\[ P_{g,k} u(t,\omega) = (\mathcal L_{k, j} f)(t) Y_j(\omega),
\]
where the operator $\mathcal L_{k, j}$ is given by 
\begin{equation}
    \label{product CM}
    \mathcal L_{k, j}  = 
\begin{cases}
    \prod_{s = 0}^\frac{k-2}{2} D_{k-1 -2s,j} & \text{if $k$ is even,} \\
    (-\partial_t^2 + \mu_j + \frac{(n-2)^2}{4}) \prod_{s=0}^\frac{m-3}{2} D_{k-1-2s,j} & \text{if $k$ is odd.}
\end{cases}
\end{equation}
Here, for any $L \in \mathbb N_0$, the operator $D_{L,j}$ is given by
\[ D_{L,j} := (\partial_t^2 - \mu_j)^2 - \frac{(n-2)^2}{2} (\partial_t^2 - \mu_j) - 2L^2 (\partial_t^2 + \mu_j) - 
\left(\frac{(n-2)^2}{4} - L^2 \right)^2.  \]
Recalling $\mu_j = j(j+n-2)$, we can check by direct computation that 
\begin{equation}
    \label{DMj product}
    D_{L,j} = \left(- \partial_t^2 + (j + \frac{n-2}{2} - L)^2 \right) \left( - \partial_t^2 + (j + \frac{n-2}{2} + L)^2  \right).
\end{equation}
Let $k\in\mathbb N$ be an even number. Then, as the number $L= k-1-2s$ in \eqref{product CM} runs through $\{k-1,k-3,\dots,3,1\}$, the number $\frac{n-2}{2}-L$  
in \eqref{DMj product} runs through $\{\frac{n}{2}-k,\frac{n}{2} - k +2,\dots,\frac{n}{2}-4,\frac{n}{2}-2\}$, and the number 
$\frac{n-2}{2}+L$ in \eqref{DMj product} runs through $\{\frac{n}{2}+k-2, \frac{n}{2} + k -4,\dots,\frac{n}{2}+2, \frac{n}{2}\}$. Thus, 
for $k\in\mathbb N_0$ even, \eqref{Lmj} follows from \eqref{product CM}. An analogous reasoning, together with the fact that $\mu_j + \frac{(n-2)^2}{4} 
= (j + \frac{n}{2}-1)^2$, gives the conclusion if $k\in\mathbb N_0$ is odd. 

The claimed transformation formula \eqref{Pgm emden fowler trafo} is now a direct consequence of \eqref{Pg in channel j} and \eqref{Lmj} 
combined with \cite[Lemma 2]{Egorov1991}. Indeed, the cited result shows that the transformation of $(-\Delta)^k$ through \eqref{emden-fowler} coincides exactly with the action of $P_{g,k}$ given by   \eqref{Pg in channel j} and \eqref{Lmj}. 

Using \eqref{Pgm emden fowler trafo}, we can also derive the expression \eqref{Green's Pgm} of the Green's function $G_\tau$. To do so, 
suppose that $u \in \mathcal{C}^{2k}(M_\tau)$ and $f \in \mathcal{C}(M_\tau)$ are such that $P_{g,k} u = f$ on $M_\tau$. By \eqref{Pgm emden fowler trafo}, 
the functions 
\[ \hat{u}(x) := |x|^{-\frac{n-2k}{2}} \Tilde{u}(\ln|x|, \omega) \quad {\rm and} \quad \hat{f}(x) := |x|^{-\frac{n+2k}{2}} \Tilde{f}(\ln|x|, \omega) \]
(where $\Tilde{u}$ and $\Tilde{f}$ again denote the $\tau$-periodic extensions of $u$ and $f$ to $\R \times \mathbb S^{n-1}$) then 
satisfy $(-\Delta)^k \hat{u} = \hat{f}$ in $\R^n \setminus \{0\}$. Since the Green's function on $\R^n$ of $(-\Delta)^k$ is 
$\Tilde{c}_{n,k} |x-y|^{- n+2k}$ (for a certain constant $\Tilde{c}_{n,k} >0$), this implies 
\begin{align*}
    \hat{u}(x) = \Tilde{c}_{n,k} \int_{\R^n} |x-y|^{-n+2k} \hat{f}(y)\ud y. 
\end{align*}
Using this, for any $(t, \omega) \in M_\tau$, we compute 
\begin{align*}
    u(t, \omega) = \Tilde{u}(t, \omega) 
    &= e^{\frac{n-2k}{2}t} \hat{u}(e^t \omega)\\
    &=  \Tilde{c}_{n,k} e^{\frac{n-2k}{2}t}  
    \int_{\R^n} |e^t \omega -y|^{-n+2k} \hat{f}(y) \ud y \\
    &= \Tilde{c}_{n,k} e^{\frac{n-2k}{2}t} 
 \int_0^\infty \ud r \int_{\mathbb S^{n-1}}  \ud \eta \,  r^{n-1} |e^{t} \omega - r \eta|^{-n + 2k} r^{-\frac{n+2k}{2}} \Tilde{f}(\ln r, \eta) \\
    &= \Tilde{c}_{n,k} \int_{\R} \ud s \int_{\mathbb S^{n-1}}  \ud \eta \, e^{(t-s)\frac{n-2k}{2}} |e^{t-s} \omega - \eta|^{-n + 2k}  
    \Tilde{f}(s, \eta) \\
    &= \Tilde{c}_{n,k} 2^{-\frac{n-2k}{2}} \int_{\R} \ud s \int_{\mathbb S^{n-1}}  \ud \eta \, |\cosh (t-s) - 
    \langle \omega, \eta \rangle |^{-\frac{n-2k}{2}}  \Tilde{f}(s, \eta) \\
    &= \int_{0}^{2 \pi \tau} \ud s \int_{\mathbb S^{n-1}}  \ud \eta \, \left( c_{n,k} \sum_{m \in \mathbb Z} |\cosh (t-s- 2 \pi m \tau) 
    - \langle \omega, \eta \rangle |^{-\frac{n-2k}{2}} \right) f(s, \eta),
\end{align*}
where in the last step we have set $c_{n,k} := \Tilde{c}_{n,k} 2^{-\frac{n-2k}{2}}$ and used the $\tau$-periodicity of $\Tilde{f}$. 
Since $u$ was arbitrary, it follows that 
\[ (t,\omega,s , \eta) \mapsto  c_{n,k} \sum_{m \in \mathbb Z} |\cosh (t-s- 2 \pi m \tau) - \langle \omega, \eta \rangle |^{-\frac{n-2k}{2}}\]
is the Green's function of $P_{g,k}$ on $M_\tau$. 
\end{proof}

We now turn to the proper choice of the radius $\tau_0$ in \eqref{M definition}. For this purpose, by \eqref{Lmj}, let us write 
\begin{equation}
\label{polynomial Pm definition}
    \mathsf P_{k} (X) := X^{2k} + p_{k,{k-1}}X^{2k-2} + ... + p_{k,1} X^2 + p_{k,0}  := \prod_{\ell = 1}^k \left(X^2 + 
    (\frac{n}{2} + k - 2 \ell)^2 \right) 
\end{equation} 
for the polynomial such that $\mathsf P_{k} (-\partial_t^2) = \mathcal L_{k,0}$. 
Notice that the coefficients $p_{k,m}$ are positive for all $m = 0,...,k-1$ because $\frac{n}{2} + k - 2 \ell > 0$ for all 
$\ell \in\{ 1,\dots,k\}$ (as a consequence of $n > 2k$). For completeness, we define $p_{k,k} :=1$. 

We choose $\tau_0> 0$ such that $\varphi(t,\omega) = \sin({t}/{\tau_0} )$ is in 
the kernel of the linearization of the equation 
\[ P_{g,k} u = p_{k,0} u^\frac{n+2k}{n-2k} \quad {\rm on} \quad M_{\tau_0} \]
about the constant solution $u = 1$. In other words, we require $\varphi(t,\omega) = \sin({t}/{\tau_0} )$  to solve the linear equation
\begin{equation}
    \label{lineq gamma=2 general m}
     P_{g,k} \varphi = p_{k,0} \frac{n+2k}{n-2k} \varphi \quad {\rm on} \quad M_{\tau_0}.
\end{equation}
This will lead to the desired degeneracy, as we check in Lemma \ref{lemma kernel gen m}. Let us check that this requirement determines $\tau_0 > 0$ uniquely. Indeed, using that $\mathcal L_{k,0} = 
 \mathsf P_k(-\partial_t^2)$, we have, for every $\tau > 0$, 
\begin{equation}
    \label{sin Pm}
    P_{g,k} \sin\left (\frac{t}{\tau}\right ) =  \mathsf P_k(\tau^{-2})  \sin\left (\frac{t}{\tau}\right ) 
    \quad {\rm on} \quad  M_{\tau}.
\end{equation} 
Since all the $p_{k,m}$ are positive, $\mathsf P_k(\tau^{-2})$ is strictly decreasing as a function of $\tau \in (0, \infty)$, 
with $\lim \limits_{\tau \to 0} \mathsf P_k(\tau^{-2}) = \infty$ and $\lim \limits_{\tau \to \infty} \mathsf P_k(\tau^{-2}) = p_{k,0}$. 
Since $\frac{n+2k}{n-2k} > 1$, we can pick $\tau_0 > 0$ as the unique number satisfying 
\begin{equation}
    \label{tau 0 definition general m}
\mathsf P_k(\tau_0^{-2}) = p_{k,0} \frac{n+2k}{n-2k}.  
\end{equation} 
Thus, by \eqref{sin Pm}, $\varphi(t, \omega) = \sin({t}/{\tau_0})$ solves \eqref{lineq gamma=2 general m} as desired.

\begin{remark}
    \label{remark tau0 is smallest}
   By a similar argument one can check more generally that for every $m \in \mathbb N$, there is precisely one $\tau_0^{(m)} > 0$ such 
   that $\alpha_{m,0}(\tau_0^{(m)}) = \frac{n+2k}{n-2k} p_{k,0}$.  Consequently, the functions \[\varphi(t, \omega) = 
   \sin\left (\frac{mt}{\tau_0^{(m)}}\right ) \quad {\rm or} \quad \varphi(t, \omega) = 
   \cos\left (\frac{mt}{\tau_0^{(m)}}\right )\quad \] 
   solve \eqref{lineq gamma=2 general m} on $\mathbb S^1(\tau_0^{(m)}) \times \mathbb S^{n-1}$. 
   On the other hand, if $j \geq 1$, then for every $m \in \mathbb N_0$ and $\tau > 0$ we have
   \[ \alpha_{m,j}(\tau) \geq \alpha_{0,1}(\tau) = \alpha_{0,1}(\tau_0) > \alpha_{1,0}(\tau_0) = \frac{n+2k}{n-2k} p_{k,0}, \]
   by the inequality \eqref{alpha 01 > alpha 10} below.  

   Since the $\alpha_{m,j}(\tau)$ are decreasing functions of $\tau$, the value $\tau=\tau_0$  from \eqref{tau 0 definition general m} 
   is actually the \emph{smallest} value of $\tau$ such that the linearized equation \eqref{lineq gamma=2 general m} has a non-zero solution. 
\end{remark}

\subsection{Bounding $\mathsf{P}_{k}$ and $\tau_0$}

We will find the following bounds regarding $\tau_0$ useful in later computations. 
\begin{lemma}
\label{lemma tau 0 bounds}
    Let $n,k\in\mathbb N$ 
         with $n>2k$ and let $\tau_0 = \tau_0(n,k)$ be defined by \eqref{tau 0 definition general m}.  Then, one has
    \[ \frac{1}{\sqrt{n + 2k - 4}} \leq \tau_0(n,k) \leq \frac{1}{\sqrt{n-2k}}. \]
\end{lemma}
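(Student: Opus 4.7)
The plan is to exploit the strict monotonicity of $\mathsf{P}_k$ on $[0,\infty)$ (which follows from the fact that all the coefficients $p_{k,j}$ are positive) to reduce the claim to verifying
\[ \mathsf{P}_k(n-2k) \;\leq\; \tfrac{n+2k}{n-2k}\, p_{k,0} \;\leq\; \mathsf{P}_k(n+2k-4). \]
To make the structure transparent I would introduce the shorthand $s_\ell := \tfrac{n+2k-4\ell}{2}$ for $\ell = 1, \ldots, k$, together with the extra value $s_0 := \tfrac{n+2k}{2}$. Then $\mathsf{P}_k(x) = \prod_{\ell=1}^k (x + s_\ell^2)$, $p_{k,0} = \prod_{\ell=1}^k s_\ell^2$, and $(s_\ell)_{\ell \geq 0}$ is an arithmetic progression with common difference $-2$, so in particular $s_{\ell-1} = s_\ell + 2$. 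Since $n > 2k$, all $s_\ell$ are positive and the sequence is decreasing, so $s_k \leq s_\ell \leq s_1$ for every $1 \leq \ell \leq k$.

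The crux is the telescoping identity
\[ \prod_{\ell=1}^k s_\ell s_{\ell-1} \;=\; (s_0 s_1)(s_1 s_2)\cdots(s_{k-1} s_k) \;=\; s_0 s_k \prod_{\ell=1}^{k-1} s_\ell^2 \;=\; \frac{s_0}{s_k}\prod_{\ell=1}^k s_\ell^2 \;=\; \frac{n+2k}{n-2k}\, p_{k,0}, \]
which identifies the target value with a product naturally built from the $s_\ell$. Combined with the elementary factorization $s_\ell(s_\ell + 2) = s_\ell s_{\ell-1}$, both endpoint estimates now follow from a single comparison. For the lower bound on $\tau_0$, one evaluates $\mathsf{P}_k$ at $x = n+2k-4 = 2s_1$; since $s_1 \geq s_\ell$ for all $\ell$,
\[ s_\ell^2 + 2s_1 \;\geq\; s_\ell^2 + 2 s_\ell \;=\; s_\ell s_{\ell-1}, \]
so $\mathsf{P}_k(n+2k-4) \geq \prod_\ell s_\ell s_{\ell-1} = \tfrac{n+2k}{n-2k}\,p_{k,0}$, forcing $\tau_0^{-2} \leq n+2k-4$. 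Symmetrically, at $x = n-2k = 2 s_k$, since $s_k \leq s_\ell$ one obtains the reverse inequality $s_\ell^2 + 2 s_k \leq s_\ell s_{\ell-1}$, and hence $\mathsf{P}_k(n-2k) \leq \tfrac{n+2k}{n-2k} p_{k,0}$, which gives $\tau_0^{-2} \geq n-2k$. Monotonicity of $\mathsf{P}_k$ then pins $\tau_0^{-2}$ inside the interval $[n-2k, n+2k-4]$, which is equivalent to the stated bounds on $\tau_0$.

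There is no serious obstacle in this approach: the only insight required is spotting the telescoping identity, which is precisely what forces the specific constants $n-2k$ and $n+2k-4$ to appear. As a sanity check, when $k=1$ one has $n-2k = n+2k-4 = n-2$ and both bounds collapse to the explicit value $\tau_0 = 1/\sqrt{n-2}$ obtained directly from the linear equation $\tau_0^{-2} + (\tfrac{n-2}{2})^2 = \tfrac{n+2}{n-2}(\tfrac{n-2}{2})^2$.
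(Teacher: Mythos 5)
Your proof is correct and follows essentially the same route as the paper: both identify $\tfrac{n+2k}{n-2k}p_{k,0}$ with the product $\prod_{\ell=1}^k s_{\ell-1}s_\ell$ (your telescoping identity is exactly the product decomposition the paper writes down directly) and then compare factor by factor with $\mathsf P_k(n-2k)$ and $\mathsf P_k(n+2k-4)$, concluding via the monotonicity of $\mathsf P_k$. The $s_\ell$ notation is a tidy repackaging, but the underlying argument is the same.
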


Notice that for $k = 1$ the upper and lower bounds coincide and we recover 
$\tau_0(n,1)= \frac{1}{\sqrt{n-2}}$. 

\begin{proof}
    
    Since $\mathsf P_k$ is strictly increasing on $[0, \infty)$, in view of \eqref{tau 0 definition general m}, the inequality $\tau_0 \leq 
    \frac{1}{\sqrt{n-2k}}$ follows if we can show that $\mathsf P_k(n-2k) \leq \frac{n+2k}{n-2k} 
    p_{k,0}$. 

    By writing 
    \[ \frac{n+2k}{n-2k} p_{k,0} = \prod_{\ell=1}^k \left(\frac{n}{2} + k - 2 \ell + 2\right)\left(\frac{n}{2} 
    + k - 2 \ell\right) \]
    and 
    \[ \mathsf P_k(n -2k) = \prod_{\ell=1}^k \left( n - 2k + \left(\frac{n}{2} + k - 2 \ell\right)^2 \right) \]
    the desired inequality $\mathsf P_k(n-2k) \leq \frac{n+2k}{n-2k} p_{k,0}$ follows from the fact that 
    \begin{align*}
        \left(\frac{n}{2} + k - 2 \ell + 2\right)\left(\frac{n}{2} + k - 2 \ell\right) - \left( n-2k + \left(\frac{n}{2} 
        + k - 2 \ell\right)^2 \right)&= 2 \left(\frac{n}{2} + k - 2 \ell\right) - (n-2k)\\
        &= 4 k - 4 \ell \geq 0
    \end{align*}
for every $\ell \in\{ 1,\dots,k\}$. 

Analogously, from the fact that 
   \begin{align*}
        \left(\frac{n}{2} + k - 2 \ell + 2\right)\left(\frac{n}{2} + k - 2 \ell\right) - \left( n+2k - 4 + \left(\frac{n}{2} + k - 2 \ell\right)^2 \right)
        &= 2 \left(\frac{n}{2} + k - 2 \ell\right) - (n+2k-4)\\
        &= -4 \ell + 4 \leq 0
    \end{align*}
for every $\ell \in\{ 1,\dots,k\}$, we deduce $\mathsf P_k(n+2k -4) \geq \frac{n+2k}{n-2k} p_{k,0}$, and 
hence $\tau_0(n,k) \geq \frac{1}{\sqrt{n+2k-4}}$.  
\end{proof}

\begin{lemma}
    \label{lemma strict binding ineq}
    Let $n,k \in \mathbb N$ with $n>2k$ and let $M = \mathbb S^1(\tau_0) \times \mathbb S^{n-1}$ 
    with $\tau_0$ defined by \eqref{tau 0 definition general m} furnished with the product metric $g\in{\rm Met}^{k}(M)$ given by \eqref{productmetric}. Then, one has 
\begin{equation}
    \label{strictbinding statement}
    \mathcal Q_{g,k} (1) < \mathcal S_{n,k},
\end{equation}
where $\mathcal S_{n,k} := \omega_n^{{2k}/{n} }{\Gamma(\frac{n+2k}{2})}
{\Gamma(\frac{n-2k}{2})}^{-1}$ is the best constant for the $k$-th order Sobolev inequality on $\mathbb S^n$ with $\omega_n = {2 \pi^{\frac{n+1}{2}}}{\Gamma(\frac{n+1}{2})^{-1}}$ its volume measure. 
\end{lemma}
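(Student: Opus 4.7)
My plan is to evaluate both sides of \eqref{strictbinding statement} explicitly, then use the a priori bound on $\tau_0$ furnished by Lemma \ref{lemma tau 0 bounds} to reduce the claim to a purely algebraic estimate among products of half-integers.

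First, since $\partial_t 1 = 0$, the formula \eqref{Lmj} together with the definition \eqref{polynomial Pm definition} of $\mathsf{P}_k$ gives $P_{g,k}(1) = \mathcal{L}_{k,0}(1) = \mathsf{P}_k(0) = p_{k,0}$. Hence the $Q$-curvature $Q_{g,k} = \frac{2 p_{k,0}}{n-2k}$ is constant on $M_{\tau_0}$, and since $\operatorname{vol}_g(M_{\tau_0}) = 2\pi \tau_0 \omega_{n-1}$, the definition \eqref{functional} yields
\[
\mathcal{Q}_{g,k}(1) = Q_{g,k}\, \operatorname{vol}_g(M_{\tau_0})^{2k/n} = \frac{2 p_{k,0}}{n-2k}\, \bigl(2\pi \tau_0 \omega_{n-1}\bigr)^{2k/n}.
\]

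Second, I would split the Pochhammer-type product defining $\mathcal{S}_{n,k}$ into its even- and odd-indexed halves:
\[
\frac{\Gamma(\tfrac{n+2k}{2})}{\Gamma(\tfrac{n-2k}{2})} = \prod_{j=0}^{2k-1}\bigl(\tfrac{n}{2}-k+j\bigr) = \sqrt{p_{k,0}}\, \cdot q_{k,0}, \qquad q_{k,0} := \prod_{j=0}^{k-1}\bigl(\tfrac{n}{2}-k+2j+1\bigr),
\]
since the even-indexed subproduct $\prod_{j=0}^{k-1}(n/2-k+2j)$ equals $\sqrt{p_{k,0}}$ by the factorization in \eqref{polynomial Pm definition}. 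Dividing \eqref{strictbinding statement} through by $\sqrt{p_{k,0}}$ then reduces it to the equivalent volume inequality
\[
\bigl(2\pi \tau_0 \omega_{n-1}\bigr)^{2k/n} < \omega_n^{2k/n}\cdot \frac{(n-2k)\, q_{k,0}}{2\sqrt{p_{k,0}}}.
\]

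Third, I would invoke Lemma \ref{lemma tau 0 bounds} in the form $\tau_0 \leq (n-2k)^{-1/2}$, which yields the stronger sufficient condition
\[
\frac{2\pi \omega_{n-1}}{\sqrt{n-2k}} \leq \omega_n \left(\frac{(n-2k)\, q_{k,0}}{2\sqrt{p_{k,0}}}\right)^{n/(2k)}.
\]
Substituting the standard Gamma representations $\omega_n = 2\pi^{(n+1)/2}/\Gamma(\tfrac{n+1}{2})$ and $\omega_{n-1} = 2\pi^{n/2}/\Gamma(\tfrac{n}{2})$, one can rewrite this as an inequality solely between products of half-integers and a single Gamma-function ratio. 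The ratio $q_{k,0}/\sqrt{p_{k,0}}$ can in turn be bounded below by the elementary telescoping estimate
\[
\frac{q_{k,0}}{\sqrt{p_{k,0}}} = \prod_{j=0}^{k-1}\frac{n/2-k+2j+1}{n/2-k+2j} \geq 1 + \sum_{j=0}^{k-1}\frac{1}{n/2-k+2j},
\]
while Legendre's duplication formula gives an exact expression for $\Gamma(\tfrac{n+1}{2})/\Gamma(\tfrac{n}{2})$ useful for large $n$ asymptotics.

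I expect the main obstacle to arise when $n-2k$ is small, since both sides of the reduced inequality become comparable in that regime and the slack provided by the crude bound $\tau_0 \leq (n-2k)^{-1/2}$ may not suffice. To overcome this, I anticipate needing a sharper expansion of $\tau_0$ around the endpoint, obtained by Taylor-expanding the defining relation \eqref{tau 0 definition general m} to extract a correction $\tau_0 = (n-2k)^{-1/2} - c_{n,k}(n-2k)^{-3/2} + \cdots$, combined with a finer algebraic handling of the product $\sqrt{p_{k,0}}\, q_{k,0}$ exploiting cancellations visible in the Pochhammer factorization. In the opposite regime $n \gg 2k$, Stirling's asymptotics makes the inequality straightforward since $\omega_{n-1}/\omega_n$ grows only like $\sqrt{n}$ whereas the right-hand side grows polynomially in $n$ of exponent $n/(2k) \gg 1$.
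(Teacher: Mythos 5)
Your reduction follows the same opening moves as the paper: you compute $\mathcal Q_{g,k}(1)$ from $P_{g,k}(1)=p_{k,0}$ and $\operatorname{vol}_g(M)=2\pi\tau_0\omega_{n-1}$, factor $\Gamma(\tfrac{n+2k}{2})/\Gamma(\tfrac{n-2k}{2})$ as a Pochhammer product whose even-indexed half is $\sqrt{p_{k,0}}$ (this algebra is correct), and then insert the bound $\tau_0\le (n-2k)^{-1/2}$ from Lemma \ref{lemma tau 0 bounds}, arriving, up to normalization, at the same explicit Gamma-versus-product inequality as \eqref{strictbinding proof} and \eqref{strictbinding proof 2}. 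But that is where the actual work begins, and your proposal stops there: the reduced inequality has to be verified for \emph{every} $n>2k$, and you only argue it in the regime $n\gg 2k$ via Stirling, while explicitly conceding that for $n-2k$ small the slack from the crude bound on $\tau_0$ ``may not suffice'' and that you ``anticipate needing'' a sharper expansion of $\tau_0$ which you do not carry out. That unexecuted step is precisely the core of the lemma.

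The paper closes this gap without any refinement of the $\tau_0$ bound: setting $\Psi_k(n)=\bigl(p_{k,0}\,\Gamma(\tfrac n2-k)/\Gamma(\tfrac n2+k)\bigr)^{n/2}$ and $\Phi_k(n)=\tfrac{\sqrt{n-2k}}{2\sqrt\pi}\,\Gamma(\tfrac n2)/\Gamma(\tfrac{n+1}{2})$, it shows $\Psi_k(n)\to e^{-k}$ and $\Phi_k(n)\to (2\pi)^{-1/2}>e^{-k}$ as $n\to\infty$, and then proves the derivative comparison $(\log\Psi_k)'(n)\ge(\log\Phi_k)'(n)$ for all real $n>2k$, using concavity of the digamma function and careful series estimates; monotonicity of the difference plus its negative limit at infinity then yield \eqref{strictbinding proof 2} on all of $(2k,\infty)$, including near the endpoint. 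In particular, your suspicion that the bound $\tau_0\le(n-2k)^{-1/2}$ is too weak near $n=2k$ is unfounded, but some argument of this uniform type (or an equally quantitative substitute) is indispensable, and your proposal contains none. As written, the proof therefore has a genuine gap: the central inequality is established only asymptotically in $n$ for fixed $k$, not for all admissible $(n,k)$, and the proposed remedy (a Taylor expansion of $\tau_0$ at the endpoint combined with finer Pochhammer cancellations) is speculative and unexecuted.
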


\begin{proof}
    We have $\mathcal Q_{g,k}(1) = {\rm vol}_g(M)^\frac{2}{n} p_{k,0}$, with 
    \[ p_{k,0} = \prod_{\ell=1}^k \left( \frac{n}{2} + k - 2 \ell \right)^2\]
    the $0$-th order coefficient of the polynomial $\mathsf P_k(X) = \prod_{\ell=1}^k 
    (X+ (\frac{n}{2} + k - 2 \ell)^2)$ defined in \eqref{polynomial Pm definition}. Using that ${\rm vol}_g(M) = 2 \pi \tau_0 \omega_{n-1}$ and since $\omega_n ={2 \pi^{\frac{n+1}{2}}}{\Gamma(\frac{n+1}{2})}^{-1}$, 
    inequality \eqref{strictbinding statement} is equivalent to 
    \begin{equation}
        \label{strictbinding proof}
       p_{k,0} \frac{\Gamma(\frac{n}{2}-k)}{\Gamma(\frac{n}{2} + k)} < \left( 
\frac{1}{2 \sqrt{\pi} \tau_0} \frac{\Gamma(\frac{n}{2})}{\Gamma(\frac{n+1}{2})} \right)^\frac{2}{n} . 
    \end{equation}

By Lemma \ref{lemma tau 0 bounds}, we know that $\tau_0 = \tau_0(n,k) \leq \frac{1}{\sqrt{n-2k}}$.

Consequently, in view of \eqref{strictbinding proof}, it suffices to prove 
   \begin{equation}
        \label{strictbinding proof 2}
      \Psi_k(n) :=  \left( p_{k,0} \frac{\Gamma(\frac{n}{2}-k)}{\Gamma(\frac{n}{2} + k)} 
      \right)^\frac{n}{2} < 
\frac{\sqrt{n-2k}}{2 \sqrt{\pi}} \frac{\Gamma(\frac{n}{2})}{\Gamma(\frac{n+1}{2})} =: \Phi_k(n). 
    \end{equation}
    
By Stirling's formula, $\Gamma(z) = \sqrt{\frac{2\pi}{z}} \big( \frac{z}{e} \big)^z (1 + \mathrm{o}(1))$ as 
$z \to \infty$. It follows
\[ \frac{\Gamma(\frac{n}{2})}{\Gamma(\frac{n+1}{2})} = \sqrt{\frac{2}{n}} (1 + \mathrm{o}(1)), \quad 
\text{ and so  } \quad \Phi_k(n) \to \frac{1}{\sqrt{2 \pi}} \quad 
\text{as} \quad n \to \infty.\]
Moreover, we have 
\begin{align*}
     \Psi_k(n) &=  \left( p_{k,0} \frac{\Gamma(\frac{n}{2}-k)}{\Gamma(\frac{n}{2} + k)} 
     \right)^\frac{n}{2} = \prod_{\ell=1}^k \left( \frac{\frac{n}{2} + k - 2 \ell}{\frac{n}{2}+ k 
     - 2 \ell + 1} \right)^\frac{n}{2} \\
     & \prod_{\ell=1}^k \left(1 - \frac{1}{\frac{n}{2} + k - 2 \ell + 1}\right)^{\frac{n}{2} + k 
     - 2 \ell + 1} \left(1 - \frac{1}{\frac{n}{2} + k - 2 \ell + 1}\right)^{- k + 2 \ell - 1} \to e^{-k} 
\end{align*} 
as $n \to \infty$. Since $e^{-k} < \frac{1}{\sqrt{2 \pi}}$ for every $k \geq 1$, we can conclude  the 
proof of \eqref{strictbinding proof 2}, by showing that (treating $n$ as a real variable)
\begin{equation}
    \label{sbder}
    (\log \Psi_k)'(n) \geq (\log \Phi_k)'(n) \quad \text{ for all } \quad n > 2k. 
\end{equation} 
On the one hand, we have 
\begin{equation}
    \label{sbder1}
    (\log \Phi_k)'(n) = \frac{1}{2} \left( \frac{1}{n-2k} + \psi\left(\frac{n}{2}\right) - \psi\left(\frac{n+1}{2}\right) 
    \right) \leq \frac{1}{2} \left( \frac{1}{n-2k} - \frac{1}{n} \right) = \frac{k}{n(n-2k)}, 
\end{equation} 
where $\psi(z) = {\Gamma'(z)}/{\Gamma(z)}$ is the Digamma function. The claimed inequality follows 
from the concavity of $\psi$ together with the functional equation $\psi(z + 1) = \psi(z) + 
\frac{1}{z}$ (used with $z = n/2$). 

On the other hand, 
\begin{align}
       \label{log Psi k}
    \log \Psi_k(n) &= \sum_{\ell=1}^k 
 \left(\frac{n}{2} + k - 2 \ell +1\right)  \log \left(1 - \frac{1}{\frac{n}{2} + k - 2 \ell + 1}\right)  \\
 & \quad + \sum_{\ell=1}^k  
 (- k + 2 \ell - 1) \log \left(1 - \frac{1}{\frac{n}{2} + k - 2 \ell + 1}\right). \nonumber
\end{align}

To treat the first summand of \eqref{log Psi k}, we estimate, for every $m > 1$, 
\[ \log\left(1 - \frac{1}{m}\right) = -\sum_{j=1}^\infty \frac{1}{j m^j} \geq  - \frac{1}{m} - \frac{1}{2} 
\sum_{j=2}^\infty \frac{1}{m^j} = - \frac{1}{m} - \frac{1}{2} \left(- 1 - \frac{1}{m} + 
\frac{1}{1 - \frac{1}{m}} \right) = \frac{1}{2} - \frac{1}{2m} - \frac{m}{2(m-1)},\]
using the series expansion of $\log(1 + z)$ and the geometric series. As a consequence, 
\[
\frac{\ud}{\ud m} \left(m \log\left(1 - \frac{1}{m}\right)\right)  = \log\left(1 - \frac{1}{m}\right)+ \frac{1}{m-1} 
\geq \frac{1}{2} - \frac{1}{2m} - \frac{m}{2(m-1)} + \frac{1}{m-1} = \frac{1}{2m(m-1)}
 \]
Applying this with $m = m_{k,\ell} =  \frac{n}{2} + k - 2 \ell + 1$ (note that $m > 1$ because 
$\ell \leq k < \frac{n}{2}$) gives 
\begin{align}
\label{sbder3}
\frac{\ud}{\ud n} \left( \sum_{\ell=1}^k 
 (\frac{n}{2} + k - 2 \ell +1)  \log \left(1 - \frac{1}{\frac{n}{2} + k - 2 \ell + 1}\right) \right) &\geq 
 \frac{1}{4} \sum_{\ell=1}^k \frac{1}{m_{k,\ell}(m_{k,\ell}-1)} \\ \nonumber
 &= \sum_{\ell=1}^k \frac{1}{(n+2k-4\ell)(n+2k - 4 \ell + 2)}.    
\end{align}

Next, direct computation gives that the derivative of the second summand in \eqref{log Psi k} is 
\begin{equation}
    \label{log Psi 2nd}
    \frac{\ud}{\ud n} \left( \sum_{\ell=1}^k  (- k + 2 \ell - 1) \log \left(1 - \frac{1}{\frac{n}{2} + k - 
    2 \ell + 1}\right) \right) = - \sum_{\ell = 1}^k \frac{2k-4 \ell + 2}{(n + 2k - 4\ell + 2) (n + 2k - 4 \ell)}. 
\end{equation}

By combining \eqref{sbder1}, \eqref{sbder3} and \eqref{log Psi 2nd} we obtain 
\begin{align*}
    (\log \Psi_k - \log \Phi_k)'(n) &\geq  \sum_{\ell=1}^k \frac{-2k + 4 \ell - 1}{(n+2k-4\ell)
    (n+2k - 4 \ell + 2)} - \frac{k}{n(n-2k)} \\
    & =  \sum_{\ell=1}^{k-1} \frac{-2k + 4 \ell - 1}{(n+2k-4\ell)(n+2k - 4 \ell + 2)} + 
    \left( \frac{2k-1}{(n-2k)(n-2k + 2)} - \frac{k}{n(n-2k)} \right) \\
    &\geq \sum_{\ell=1}^{k-1} \frac{-2k + 4 \ell - 1}{(n+2k-4\ell)(n+2k - 4 \ell + 2)} + 
    \frac{k-1}{(n-2k)(n-2k+2)}. 
\end{align*} 
In the $k$ summands on the right side, the denominators form a decreasing sequence of positive 
numbers. Moreover, the numerators form an increasing sequence of numbers which sum to zero:
\[ \sum_{\ell=1}^{k-1} (-2k + 4 \ell -1) + (k-1) = \left( -2 k(k-1)  + 4 \frac{(k-1)k}{2} - 
(k-1) \right) + (k-1) =  0. \]
From these facts it is elementary to conclude that the right side is non-negative. 
Thus, \eqref{sbder} follows,  and the proof is complete. 
\end{proof}

\begin{lemma}
\label{lemma Phi < Lambda gen m}
    Let $n,k\in\mathbb N$ and $n>2k$ and let $\tau_0$ be defined by \eqref{tau 0 definition general m}. For any $\ell \geq 0$, one has 
    \[     \frac{\Gamma\left(\frac{n-2k}{2n}\right) \Gamma\left(\frac{n+2k}{2n}+\ell\right)}
    {\Gamma\left(\frac{n+2k}{2n}\right) \Gamma\left(\frac{n-2k}{2n}+\ell\right)}  \leq p_{k,0}^{-1} 
    \mathsf P_k(\tau_0^{-2} \ell^2).  \]
    Furthermore, equality holds if and only if $\ell = 0$ or $\ell = 1$. 
\end{lemma}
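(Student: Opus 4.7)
The plan is to establish the inequality for integer $\ell \in \mathbb N_0$ — this being consistent with the intended role of $\ell$ as a Fourier or spherical-harmonic mode index, and in fact necessary, since for non-integer real $\ell \in (0,1)$ the inequality fails (e.g.\ for $k=1$, $n=4$, $\ell=\tfrac12$ the left side is $\tfrac14[\Gamma(\tfrac14)/\Gamma(\tfrac34)]^2 \approx 2.19$ while the right side is $\tfrac32$). Write $a=\tfrac{n-2k}{2n}$, $b=\tfrac{n+2k}{2n}$, so $a+b=1$ and $b-a=\tfrac{2k}{n}$, and set $d_i:=c_i\tau_0$ with $c_i:=\tfrac{n-2k}{2}+2i$ for $i=0,\ldots,k-1$; then $\mathsf P_k(X)=\prod_i(X+c_i^2)$ and $p_{k,0}=\prod_ic_i^2$. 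For integer $\ell\geq 0$, the left-hand side admits the product form $\Phi(\ell):=\prod_{m=0}^{\ell-1}(m+b)/(m+a)$, while the right-hand side is $R(\ell):=\prod_{i=0}^{k-1}(\ell^2+d_i^2)/d_i^2$.

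The boundary cases $\ell=0,1$ give equality. Indeed $\Phi(0)=R(0)=1$, while $\Phi(1)=b/a=(n+2k)/(n-2k)$ and $R(1)=\mathsf P_k(\tau_0^{-2})/p_{k,0}=(n+2k)/(n-2k)$ by the defining relation \eqref{tau 0 definition general m} of $\tau_0$. For integer $\ell \geq 2$ I proceed by induction; the inductive step reduces to proving the \emph{one-step inequality}
\[ (\star)\qquad \frac{\ell+b}{\ell+a}\;\leq\;\prod_{i=0}^{k-1}\frac{(\ell+1)^2+d_i^2}{\ell^2+d_i^2}\qquad\text{for every integer }\ell\geq 1,\]
since telescoping $(\star)$ from $\ell=1$ and using $\Phi(1)=R(1)$ yields $\Phi(\ell)\leq R(\ell)$ for every integer $\ell\geq 2$, with strict inequality.

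The inequality $(\star)$ is equivalent to $\Psi(\ell)\geq 0$, where
\[ \Psi(\ell):=(\ell+a)\prod_{i=0}^{k-1}\bigl[(\ell+1)^2+d_i^2\bigr]\;-\;(\ell+b)\prod_{i=0}^{k-1}\bigl[\ell^2+d_i^2\bigr]\]
is a polynomial in $\ell$ of degree $2k$ (the leading $\ell^{2k+1}$ terms cancel). Substituting $\ell=0$, and using the identity $\prod_i(1+d_i^2)/\prod_id_i^2=\mathsf P_k(\tau_0^{-2})/p_{k,0}=b/a$ (which is precisely \eqref{tau 0 definition general m} after dividing by $\tau_0^{2k}p_{k,0}$), we obtain $\Psi(0)=a\prod_i(1+d_i^2)-b\prod_id_i^2=0$. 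Substituting $\ell=-1$ and using $a-1=-b$, $b-1=-a$ (from $a+b=1$) together with the same calibration, we likewise get $\Psi(-1)=-b\prod_id_i^2+a\prod_i(1+d_i^2)=0$. Hence $\Psi(\ell)=\ell(\ell+1)Q(\ell)$ with $Q\in\mathbb R[\ell]$ of degree $\leq 2k-2$.

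The main obstacle is then to prove $Q(\ell)\geq 0$ for $\ell\geq 1$ (strictly, for $\ell>0$). Following the approach of Beckner \cite[Theorem~4]{MR1230930}, the plan is to exploit the probabilistic interpretation $\Phi(\ell)^{-1}=\mathbb E_\nu[T^\ell]$ with $T\sim\mathrm{Beta}(a,b-a)$, while $R(\ell)^{-1}=\prod_id_i^2/(d_i^2+\ell^2)$ is the squared characteristic function at $\ell$ of a sum of $k$ independent exponentials with rates $d_i$; the matching of these two moment generating-type quantities at $\ell=0,1$ is enforced by the boundary cases and the factorization in Step 3, and the comparison at larger integer $\ell$ follows from a rearrangement/convexity argument exploiting the arithmetic-progression structure $c_i=\tfrac{n-2k}{2}+2i$ and the sharp calibration of $\tau_0$. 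An alternative, more combinatorial route (closer in spirit to the remark in \cite[p.~1463]{F}) is to expand $Q$ as a polynomial in $\ell^2$, express its coefficients through the elementary symmetric polynomials in $(d_i^2)_{i=0,\ldots,k-1}$, and verify each coefficient is nonnegative using the calibration identity $\prod_i(1+d_i^2)=(b/a)\prod_id_i^2$. The principal difficulty in either approach is that this single algebraic relation at $\ell=1$ must simultaneously control the positivity of all the higher-order coefficients of $Q$.
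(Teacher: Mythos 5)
Your reduction is set up correctly (the identities $\Phi(0)=R(0)=1$, $\Phi(1)=R(1)=\frac{n+2k}{n-2k}$, and the vanishing of $\Psi$ at $\ell=0,-1$ all check out, and the observation that the stated inequality fails for real $\ell\in(0,1)$ is a fair point about the phrasing "any $\ell\ge 0$" — the paper's own argument likewise only covers $\ell=0$ and real $\ell\ge 1$, which suffices for the Fourier-mode application). However, there is a genuine gap: the entire content of the lemma for $\ell\ge 2$ rests on the positivity $Q(\ell)\ge 0$, and you do not prove it. You name two possible routes (a probabilistic/rearrangement argument and a coefficient-by-coefficient expansion of $Q$ in the elementary symmetric functions of the $d_i^2$) but carry out neither, and you yourself flag that the single calibration identity at $\ell=1$ must control all $2k-1$ coefficients of $Q$ simultaneously — that is precisely the hard part, and for $k\ge 2$ it is not obvious that either route closes. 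As written, the proposal is a proof only in the case $k=1$, where $Q$ is a positive constant.

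For comparison, the paper avoids this obstruction entirely by working with logarithmic derivatives in the real variable $\ell$: since $\Phi_{n,k}(1)=\Lambda_{n,k}(1)$, it suffices to show $\frac{\ud}{\ud\ell}\ln\Phi_{n,k}(\ell)<\frac{\ud}{\ud\ell}\ln\Lambda_{n,k}(\ell)$ for $\ell>1$. The left side is a difference of digamma values, which the series $\psi'(z)=\sum_{m\ge 0}(z+m)^{-2}$ turns into a telescoping sum bounded strictly by $\frac{2k}{n\ell}$; the right side is bounded below by $\frac{2k}{n\ell}$ using only the positivity of the coefficients $p_{k,m}$, the bound $\ell>1$, and the calibration $\mathsf P_k(\tau_0^{-2})=\frac{n+2k}{n-2k}p_{k,0}$. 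This replaces the need to verify positivity of a degree-$2k-2$ polynomial by two one-line estimates, and is the step your proposal is missing. Either supply a complete proof of $Q(\ell)\ge 0$ for $\ell\ge 1$, or switch to the derivative comparison.
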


\begin{proof}
Let us denote 
\[ \Phi_{n,k}(\ell) := \frac{\Gamma\left(\frac{n-2k}{2n}\right) 
\Gamma\left(\frac{n+2k}{2n}+\ell\right)}{\Gamma\left(\frac{n+2k}{2n}\right) 
\Gamma\left(\frac{n-2k}{2n}+\ell\right)} \quad {\rm and} \quad \Lambda_{n,k}(\ell) :=   p_{k,0}^{-1} 
\mathsf P_k(\tau_0^{-2} \ell^2). \]
  The equality $\Phi_{n,k}(0) = \Lambda_{n,k}(0)$ is immediate. Using the 
  definition \eqref{tau 0 definition general m} we moreover have $\mathsf P_k(\tau_0^{-2} \ell^2) 
  = \frac{n+2k}{n-2k} p_{k,0}$, so that 
  \[ \Lambda_{n,k}(1) = p_{k,0}^{-1} \mathsf P_k(\tau_0^{-2}) =  \frac{n+2k}{n-2k} = \Phi_{n,k}(1). \]
 To prove the strict inequality $\Phi_{n,k}(\ell) < \Lambda_{n,k}(\ell)$ for every $n > 2k$ and 
 $\ell \geq 2$, we argue similarly to Beckner \cite[proof of Theorem 4]{MR1230930}. Since 
 $\Phi_{n,k}(1) = \Lambda_{n,k}(1)$, it is enough to show
  \begin{equation}
      \label{ineq derivative}
      \frac{\ud}{\ud \ell} \ln \Phi_{n,k}(\ell) < \frac{\ud}{\ud \ell} \ln \Lambda_{n,k}(\ell) \quad 
      \text{ for every } \quad \ell > 1
  \end{equation}
 (where we treat $\ell$ as a real variable). To prove \eqref{ineq derivative}, we write $p = 
 \frac{2n}{n-2k} = \frac{2}{1-2s}$ with $s = \frac{m}{n} \in (0, \frac{1}{2})$ thanks to $n >2m$. 
 The left side of \eqref{ineq derivative} then is 
  \begin{align*}
     \frac{\ud}{\ud \ell} \ln \Phi_{n,k}(\ell) &=  \psi\left(\frac{1}{p'} + \ell\right) - \psi\left(\frac{1}{p} 
     + \ell\right) = \psi\left(\frac{1}{2} + s + \ell\right) - \psi\left(\frac{1}{2} - s + \ell\right) \\
     &= \sum_{m = 0}^\infty \frac{1}{\frac{1}{2} - s + \ell + m} - \frac{1}{\frac{1}{2} + s + \ell 
     + m} = 2s \sum_{m =0}^\infty \frac{1}{(\frac{1}{2} + \ell + m)^2 - s^2} \\
     &< 2s \sum_{m =0}^\infty \frac{1}{(\frac{1}{2} + \ell + m)^2 - (\frac{1}{2})^2}.  
  \end{align*}
  Here $\psi = \frac{\Gamma'}{\Gamma}$ denotes the Digamma function. The claimed series expansion 
  follows, e.g., from integrating the expansion \cite[eq. (6.4.10)]{Abramowitz1972}
  \[ \psi'(z) = \sum_{m = 0}^\infty \frac{1}{(z + m)^2}. \]
  But now it is easy to directly evaluate 
  \[ \sum_{m =0}^\infty \frac{1}{(\frac{1}{2} + \ell + m)^2 - (\frac{1}{2})^2} = \sum_{m =0}^\infty 
  \frac{1}{\ell + m} - \frac{1}{1 + \ell + m } = \frac{1}{\ell}\]
  as a telescopic sum. Recalling $s = \frac{m}{n}$, in conclusion we have shown 
  \begin{equation}
      \label{ineq ln Phi}
      \frac{\ud}{\ud \ell} \ln \Phi_{n,k}(\ell) < \frac{2m}{n \ell} . 
  \end{equation} 
  In view of \eqref{ineq ln Phi} it remains to show that 
  \[\frac{2m}{n \ell} \leq  \frac{\ud}{\ud \ell} \ln \Lambda_{n,k}(\ell) = \frac{\frac{\ud}{\ud \ell} 
  \Lambda_{n,k}(\ell)}{\Lambda_{n,k}(\ell)} \]
  Since $\Lambda_{n,k}(\ell) = p_{k,0}^{-1} \mathsf P_k(\tau_0^{-2} \ell^2) = p_{k,0}^{-1} 
  \sum_{k=0}^k p_{k,m} \tau_0^{-2m} \ell^{2m}$, some elementary manipulations show that this is 
  equivalent to 
  \begin{equation}
      \label{ineq Lambda gen m}
        m p_{k,0} \leq \sum_{m=1}^k p_{k,m} (nk - m) \tau_0^{-2m}  \ell^{2m}. 
  \end{equation} 
  Since $\ell > 1$ and $p_{k,m} > 0$ for all $k \in\{ 1,\dots,m\}$, the right side of \eqref{ineq Lambda gen m} 
  is estimated by
  \[ \sum_{m=1}^k  p_{k,m} (nk - m)\tau_0^{-2m}  \ell^{2m} \geq (n-m) \sum_{m=1}^k p_{k,m} \tau_0^{-2m}  
  = (n-m) (\mathsf P_k(\tau_0^{-2}) - p_{k,0}) = (n-m) \frac{4m}{n-2k} p_{k,0}.  \]
  Since $(n-m)\frac{4m}{n-2k} > m$, the proof of \eqref{ineq Lambda gen m} is complete. As explained 
  above, this concludes the proof of the lemma. 
\end{proof}

\subsection{The minimizers of $\mathcal Y_{k,+}(M, [g])$ and their degeneracy}

From now on, the value $\tau = \tau_0$ from \eqref{tau 0 definition general m} and the manifold $M = M_{\tau_0} = 
\mathbb S^1(\tau_0) \times \mathbb S^{n-1}$ (furnished with the product metric $g\in {\rm Met}^k(M)$ defined as \eqref{productmetric}) are fixed. We will write $\alpha_{m,j}:= \alpha_{m,j}(\tau_0)$ for the eigenvalues 
of $P_{g,k}$ from \eqref{alpha k j definition}.    

\begin{lemma}
    \label{lemma M gamma2 minimizers gen m}
   Let $n,k \in \mathbb N$ with $2k<n$ and let $M = \mathbb S^1(\tau_0) \times \mathbb S^{n-1}$ 
    with $\tau_0$ defined by \eqref{tau 0 definition general m} furnished with the product metric $g\in{\rm Met}^{k}(M)$ given by \eqref{productmetric}.
    Then $\mathcal Y_{k,+}(M, g)$ is uniquely minimized by the constant functions. 
\end{lemma}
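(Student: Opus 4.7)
The plan is to first establish the existence of a minimizer and then to prove that every minimizer must be constant.

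\emph{Step 1: existence.} Since the constant function $u \equiv 1$ is admissible, the bound $\mathcal Y_{k,+}(M,[g]) \leq \mathcal Q_{g,k}(1)$ is immediate, and Lemma~\ref{lemma strict binding ineq} ensures this stays strictly below the bubble threshold $\mathcal S_{n,k}$. The concentration--compactness statement in Lemma~\ref{lemma compactness minseq} will then produce a positive minimizer $u_0 \in W^{k,2}(M)$ with $\|u_0\|_{L^{2_k^*}(M)} = 1$ satisfying the Euler--Lagrange equation
\[ P_{g,k} u_0 \;=\; \mathcal Y_{k,+}(M,[g])\, u_0^{(n+2k)/(n-2k)} \quad \text{on } M. \]

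\emph{Step 2: the minimizer is constant.} The claim reduces to the sharp spectral inequality
\[ \int_M u P_{g,k} u \, \ud \mu_g \;\geq\; p_{k,0}\, \operatorname{vol}_g(M)^{2k/n}\, \|u\|_{L^{2_k^*}(M)}^2 \]
for every $u \in W^{k,2}(M)$, with equality iff $u$ is constant. Expanding $u$ in the orthonormal basis \eqref{onb spher harm} of $L^2(M)$, Corollary~\ref{corollary Pgk eigenvalues} identifies the left-hand side as a diagonal spectral sum with weights $\alpha_{m,j}(\tau_0)$. To bound the right-hand side, I plan to transport $u$ to (a quotient of) $\mathbb{S}^n$ through the Emden--Fowler identification \eqref{emden-fowler}--\eqref{Pgm emden fowler trafo} followed by inverse stereographic projection, and then apply Beckner's sharp spectral form of the Sobolev inequality on the sphere in the manner of \cite[Theorem 4]{MR1230930} (as hinted at in \cite[p. 1463]{F}). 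This produces an upper bound on $\|u\|_{L^{2_k^*}(M)}^2$ in terms of a spectral sum with weights $\Phi_{n,k}(\ell)$. A term-by-term comparison of the two spectral expansions via Lemma~\ref{lemma Phi < Lambda gen m}, which furnishes $\alpha_{m,j}(\tau_0) \geq p_{k,0} \Phi_{n,k}(\ell)$ for the associated sphere degree $\ell$, with strict inequality whenever $\ell \geq 2$, yields the desired sharp inequality. The equality case is then forced to lie entirely in the $\ell \in \{0,1\}$ span, and a separate argument using the Euler--Lagrange equation and positivity of $u_0$ removes the $\ell = 1$ contributions, leaving $u_0$ constant.

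\emph{Main obstacle.} The hardest part is making the spectral transport precise. Since $M$ is conformally equivalent only to a quotient of $\mathbb{R}^n \setminus \{0\}$ by a discrete dilation group, and not to $\mathbb{S}^n$ itself, sphere bubbles do not descend to $M$ -- this is exactly what allows the binding gap in Lemma~\ref{lemma strict binding ineq}, but it also means that Beckner's inequality must be adapted rather than applied verbatim. Unlike the case $k=1$ of \cite{Schoen1989,MR3352243}, where a phase-plane analysis directly classifies all ODE solutions and their periods, no such classification is available for $k \geq 2$ (see \cite{Frank2019,Andrade2022}); the spectral route through Lemma~\ref{lemma Phi < Lambda gen m} therefore is essential, and its strict inequality for $\ell \geq 2$ is what ultimately forces uniqueness.
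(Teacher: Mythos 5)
Your Step 1 matches the paper, and your instinct to use a Beckner-type dual-spectral comparison together with Lemma \ref{lemma Phi < Lambda gen m} is the right one for the endgame. But there is a genuine gap in the middle: you are missing the symmetry reduction. The paper's proof has an essential intermediate step in which any positive minimizer $u_0$, being a positive solution of the Euler--Lagrange equation, is transported via the Emden--Fowler change of variables \eqref{emden-fowler}--\eqref{Pgm emden fowler trafo} to a positive solution of $(-\Delta)^k v = c\,v^{(n+2k)/(n-2k)}$ on $\R^n\setminus\{0\}$, to which the moving planes method (Lin, Wei--Xu) applies; this forces $v$ to be radial, i.e.\ $u_0$ to depend only on $t\in\mathbb S^1(\tau_0)$. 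Only after this reduction does the spectral argument make sense: the problem becomes a genuinely one-dimensional, \emph{subcritical} minimization on $\mathbb S^1$, where Beckner's dual-spectral Hardy--Littlewood--Sobolev inequality \cite[eq.\ (19)]{MR1230930} applies verbatim with coefficients $\Phi_{n,k}(\ell)$ indexed by the single Fourier mode $\ell$ on the circle, and Lemma \ref{lemma Phi < Lambda gen m} compares these precisely with the normalized circle eigenvalues $p_{k,0}^{-1}\mathsf P_k(\tau_0^{-2}\ell^2)=p_{k,0}^{-1}\alpha_{\ell,0}(\tau_0)$.

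Your proposed route — a direct spectral comparison on all of $M$ in the full basis \eqref{onb spher harm} — does not go through with the available tools. Lemma \ref{lemma Phi < Lambda gen m} says nothing about the eigenvalues $\alpha_{m,j}(\tau_0)$ with $j\geq 1$, and there is no well-defined ``associated sphere degree $\ell$'' for a doubly-indexed mode $(m,j)$ of $M$: the HLS spectral decomposition on $\mathbb S^n$ is in terms of spherical harmonics of $\mathbb S^n$, which do not diagonalize compatibly with the product eigenbasis of $\mathbb S^1(\tau_0)\times\mathbb S^{n-1}$ (indeed $M$ is not conformal to $\mathbb S^n$, as you note). So the ``term-by-term comparison'' you invoke is not defined, and the adaptation of Beckner you acknowledge as necessary is exactly the missing content. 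Your equality-case discussion is likewise underdeveloped: in the paper, uniqueness follows because equality forces $v$ to be simultaneously an affine combination $C(1+A\cos s+B\sin s)$ (from the equality cases $\ell\in\{0,1\}$ in Lemma \ref{lemma Phi < Lambda gen m}) and a conformal factor $c(1+a\cos s+b\sin s)^{(n-2k)/(2n)}$ (from the HLS equality case), which is only possible for constants; no separate Euler--Lagrange argument is needed or supplied. To repair your proof, insert the moving-planes radial-symmetry step before the spectral comparison and restrict the Beckner argument to functions of $t$ alone.
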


\begin{proof}
    The proof will be divided into three steps. 
    
    \noindent \textit{Step 1: Existence of minimizers.  }

Once we prove coercivity of $P_{g,k}$, the existence of a minimizer $u> 0$ follows directly from 
\cite[Theorem 3]{Mazumdar2016} combined with Lemma \ref{lemma strict binding ineq} and 
Lemma \ref{lemma Pgm} (which guarantees the needed positivity of the Green's function). It only remains to show that there exists $c>0$ such that 
\[\int_M u P_{g,k}(u)\ud \mu_g \geq c \|u\|_{W^{k,2}(M)}^2\]
Write \[u = \sum_{(m,j,l)\in \mathcal{I}} u_{m,j,l} \mathfrak{a}_{m,j,l} + \tilde u_{m,j,l} \mathfrak{b}_{m,j,l}\]
for certain 
coefficients $ u_{m,j,l}, \tilde  u_{m,j,l} \in \R$, where $\mathfrak{a}_{m,j,l}$ and $\mathfrak{b}_{m,j,l}$ are 
the basis functions from \eqref{onb spher harm} and $\mathcal{I}:=\mathbb N_0\times\mathbb N_0\times \{1,\dots,N_j\}$. Then 
it can be deduced from the orthogonality properties of the $\mathfrak{a}_{m,j,l}$, $\mathfrak{b}_{m,j,l}$ that 
\[ \|u\|_{W^{k,2}(M)}^2 \lesssim \sum_{(m,j,l)\in \mathcal{I}}  (u_{m,j,l}^2 + \tilde u_{m,j,l}^2)(1 + m^{2k} + j^{2k}). \]
On the other hand, the expression \eqref{alpha k j definition} of the eigenvalues $\alpha_{m,j}$ 
of $P_{g,k}$ implies the estimate
\[ \alpha_{m,j} \gtrsim 1 + m^{2k} + j^{2k},\]
Hence, the estimate below holds
\[ \int_M u P_{g,k}(u)\ud\mu_g= \sum_{(m,j,l)\in \mathcal{I}}  (u_{m,j,l}^2 + \tilde u_{m,j,l}^2) \alpha_{m,j} 
\gtrsim \sum_{(m,j,l)\in \mathcal{I}}  (u_{m,j,l}^2 
+ \tilde u_{m,j,l}^2)(1 + m^{2k} + j^{2k}), \]
which proves coercivity. 

\noindent\textit{Step 2: Minimizers are radial. }

 Let $0< u \in \mathcal{C}^{2k}(M)$ be a minimizer of $\mathcal Y_{k,+}(M, [g])$ on $M$, which exists by Step 1. Then, up to multiplying it by a suitable scalar factor,$u$ satisfies the Euler-Lagrange equation 
\begin{equation}
    \label{EL eq minimizer}
    P_{g,k} u = c_{n,k}u^\frac{n+2k}{n-2k} \quad {\rm on} \quad M,
\end{equation} 
where $c_{n,k}>0$ is a normalizing dimensional constant.
Let $\Tilde{u} \in \mathcal{C}^{2k}(\R \times \mathbb S^{n-1})$ be the $2 \pi 
\tau_0$-periodic extension of $u$ to $\R \times \mathbb S^{n-1}$ and $v(x) = |x|^{-\frac{n-2k}{2}} \Tilde{u}(\ln |x|, \omega)$ be its logarithmic transform on $\R^n \setminus \{0\}$. By \eqref{EL eq minimizer} and \eqref{Pgm emden fowler trafo}, $v$ satisfies 
\[ (-\Delta)^k v = c_{n,k}v^\frac{n+2k}{n-2k} \quad {\rm on} \quad \R^n \setminus \{0\}. \]
Since $v > 0$, by applying the moving planes method as in \cite{Lin, WeiXu1999} it follows that $v(x)$ only depends on $|x|$. Equivalently, $\Tilde{v}(t, \omega)$, and hence $v(t, \omega)$, only depends on $t$. 

  \noindent \textit{Step 3: Constants are the unique minimizers. }

    While Steps 1 and 2 still are valid for arbitrary $\tau > 0$, in this step we will make crucial use of the expression \eqref{tau 0 definition general m} for $\tau_0$.

By Steps 1 and 2, we only need to prove that constant functions uniquely minimize $\mathcal Q_{g,k}$ among functions $u$ only depending on $t \in\mathbb S^1(\tau_0)$. By Lemma \ref{lemma Pgm}, the inequality we need to show is thus 
    \begin{equation}
        \label{1d ineq S1tau gen m}
        \int_0^{2\pi \tau_0}  u \mathsf P_k (-\partial_t^2) u \ud t  \geq p_{k,0} (2 \pi \tau_0)^{\frac{2k}{n}} \left( \int_0^{2 \pi \tau_0} |u|^\frac{2n}{n-2k} \ud t\right)^\frac{n-2k}{n}
    \end{equation} 
(where $p_{k,0}$ is defined through \eqref{polynomial Pm definition})    with equality if and only if $u$ is constant. 
    
    For the following argument, it will be convenient to rescale $\mathbb S^1(\tau_0)$ back to the unit sphere $\mathbb S^1 \simeq (0, 2 \pi)$. For $v(t)= u(t \tau_0)$ the inequality we need to show then reads as 
    \begin{equation}
        \label{1d ineq S1 gen m}
        \int_{\mathbb S^1}  v \mathsf P_k (- \tau_0^{-2} \partial_t^2) v \ud \sigma \geq p_{k,0} (2 \pi)^{\frac{2k}{n}} \left( \int_{\mathbb S^1} |v|^\frac{2n}{n-2k} \ud\sigma\right)^\frac{n-2k}{n}.
    \end{equation}
    where $\ud\sigma$ denotes the standard measure on the unit circle $\mathbb S^1$.
    We will now prove that \eqref{1d ineq S1 gen m} holds, with equality if and only if $v$ is constant, through an argument inspired by Beckner \cite[proof of Theorem 4]{MR1230930}. For this purpose, we decompose a given $v \in W^{k,2}(\mathbb S^1)$ into its Fourier eigenmodes
		\begin{equation*}
	v(t)=\sum_{\ell=0}^{\infty}Y_\ell(t),
		\end{equation*}
where, for every $\ell \geq 0$, $Y_\ell(t)=c_\ell\cos(\ell t) + d_\ell \sin(\ell t)$ for some $c_\ell, d_\ell \in\mathbb R$. 

    Then the left side of \eqref{1d ineq S1 gen m} reads as 
    \begin{align*}
         \int_{\mathbb S^1}  u \mathsf P_k (- \tau_0^{-2} \partial_t^2) u \ud t  = \sum_{\ell = 0}^\infty \mathsf P_k(\tau_0^{-2} \ell^2) \int_{\mathbb S^1} |Y_\ell|^2\ud \sigma. 
    \end{align*}
   By the 'dual-spectral' version of the Hardy--Littlewood--Sobolev inequality \cite[eq. (19)]{MR1230930} we can estimate the right side of \eqref{1d ineq S1 gen m} as
    \begin{equation}
        \label{hls beckner gen m}
        (2 \pi)^{1 - \frac{n-2k}{n}} \left( \int_{\mathbb S^1} |v|^{\frac{2n}{n-2k}} \ud\sigma\right)^\frac{n-2k}{n} \leq \sum_{\ell = 0}^\infty \frac{\Gamma\left(\frac{n-2k}{2n}\right) \Gamma\left(\frac{n+2k}{2n}+\ell\right)}{\Gamma\left(\frac{n+2k}{2n}\right) \Gamma\left(\frac{n-2k}{2n}+\ell\right)} \int_{\mathbb{S}^1}\left|Y_\ell\right|^2\ud\sigma, 
    \end{equation}

In Lemma \ref{lemma Phi < Lambda gen m} we have proved that 
\begin{equation}
    \label{ineq beckner n, ell gen m}
    \frac{\Gamma\left(\frac{n-2k}{2n}\right) \Gamma\left(\frac{n+2k}{2n}+\ell\right)}{\Gamma\left(\frac{n+2k}{2n}\right) \Gamma\left(\frac{n-2k}{2n}+\ell\right)}  \leq p_{k,0}^{-1} \mathsf P_k(\tau_0^{-2} \ell^2). 
\end{equation} 
for every $n > 2k$, $\ell \geq 0$, and so \eqref{ineq beckner n, ell gen m} follows. 

Moreover, still by Lemma \ref{lemma Phi < Lambda gen m}, equality in \eqref{ineq beckner n, ell gen m} occurs precisely for $\ell = 0,1$. 

On the other hand, by the classification of HLS optimizers \cite{Lieb1983}, equality in \eqref{hls beckner gen m} holds if and only if $v$ is a  conformal factor, i.e., 
    \begin{equation}
        \label{conformal factor gen m}
        v(s) = c (1 + a \cos s + b \sin s)^\frac{n-2k}{2n} 
    \end{equation} 
  for some $c \in \R \setminus \{0\}$, $a, b \in \R$ with $a^2 + b^2 < 1$. 

Hence, if equality holds in \eqref{1d ineq S1 gen m} for some $v$, then by the equality conditions for \eqref{hls beckner gen m} and \eqref{ineq beckner n, ell gen m} we must have 
\[ v(s) = C( 1 + A \cos s + B \sin s) = c (1 + a \cos s + b \sin s)^\frac{n-2k}{2n}. \]
It is easy to see that this implies that $v$ must be constant. This ends the proof.
\end{proof}

In what follows, it will be convenient to 
write $\|u\|_{{2_k^*}}=\|u\|_{L^{2_k^*}(M)}$ with $2_k^*=\frac{2n}{n-2k}$, as well as $\|u\|_{k,2}=\|u\|_{W^{k,2}(M)}$ and 
$\mathcal Y := \mathcal Y_{k,+}(M, [g])$. Here, $M$ is given by \eqref{M definition} furnished with the standard product metric, denoted by $g$. 
Moreover, we abbreviate 
\begin{equation}
    \label{E definition}
    \mathcal E(u) := \frac{2}{n-2k} \int_M u P_{g,k} (u)\ud\mu_g,
\end{equation}
so that
\[\mathcal Q_{g,k}(u) = \frac{\mathcal E(u)}{\|u\|^{2}_{{2_k^*}}}. \]

 Lemma \ref{lemma M gamma2 minimizers gen m} yields that 
\begin{equation}
    \label{F definition}
    \mathcal F(u) := \mathcal E(u)  - \mathcal Y\|u\|^{2}_{{2_k^*}}
\end{equation} 
is uniquely minimized by constant functions (with minimal value $0$). 

We now check that, by our special choice of $\tau_0$, the kernel of $D^2 \mathcal F(1)$ contains a non-constant function.

\begin{lemma}
    \label{lemma kernel gen m}
Let $n,k \in \mathbb N$ with $n>2k$ and let $M = \mathbb S^1(\tau_0) \times \mathbb S^{n-1}$ 
    with $\tau_0$ defined by \eqref{tau 0 definition general m} furnished with the product metric $g\in{\rm Met}^{k}(M)$ given by \eqref{productmetric}. 
    The kernel of $D^2 \mathcal F(1)$ is spanned by $1$, $\cos({t}/{\tau_0})$ and $\sin({t}/{\tau_0})$. 
\end{lemma}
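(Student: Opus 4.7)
The plan is to compute $D^2\mathcal F(1)$ explicitly, reduce the kernel problem to an eigenvalue problem for $P_{g,k}$, and then identify which joint Fourier/spherical-harmonic modes are resonant at the parameter $\tau_0$.

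First, I would expand $\mathcal F(1+\epsilon v)$ to second order in $\epsilon$. Using $P_{g,k}(1) = p_{k,0}$ (which follows from \eqref{Lmj} at $m = j = 0$) together with $\mathcal Y = \frac{2 p_{k,0}}{n-2k}\operatorname{vol}_g(M)^{2k/n}$, a routine computation yields
\begin{equation*}
    D^2 \mathcal F(1)[v,v] = \frac{4}{n-2k} \int_M v\!\left(P_{g,k} - \frac{(n+2k)p_{k,0}}{n-2k}\right)\!v\, \ud\mu_g + \frac{16 k\, p_{k,0}}{(n-2k)^2\, \operatorname{vol}_g(M)}\left(\int_M v\, \ud\mu_g\right)^{\!2}.
\end{equation*}

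Next, I would decompose $v = a + w$ with $a \in \R$ constant and $w$ of mean zero on $M$. The scale-invariance of $\mathcal Q_{g,k}$ forces $\mathcal F(t) = t^2\mathcal F(1) = 0$, so the constant line automatically lies in the kernel; verifying this in the formula above amounts to the identity $4(n-2k) - 4(n+2k) + 16k = 0$. Cross-terms between $a$ and $w$ vanish identically because $\int_M w\,\ud\mu_g = 0$ and $P_{g,k}(1)$ is a constant. Hence
\[
D^2\mathcal F(1)[v,v] = \frac{4}{n-2k}\int_M w\!\left(P_{g,k} - \tfrac{(n+2k)p_{k,0}}{n-2k}\right)\!w\, \ud\mu_g,
\]
and the kernel decomposes as $\R\cdot 1$ plus the mean-zero eigenspace of $P_{g,k}$ at the eigenvalue $\lambda_* := \frac{(n+2k)p_{k,0}}{n-2k}$.

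Finally, I would use the diagonalisation of $P_{g,k}$ provided by Corollary \ref{corollary Pgk eigenvalues}. By construction \eqref{tau 0 definition general m}, $\lambda_* = \mathsf P_k(\tau_0^{-2}) = \alpha_{1,0}(\tau_0)$. Since $\mathsf P_k$ is strictly increasing on $[0,\infty)$ (all coefficients $p_{k,m}$ being positive), the equation $\alpha_{m,0}(\tau_0) = \mathsf P_k(m^2\tau_0^{-2}) = \lambda_*$ forces $m = 1$; and for $j \geq 1$, Remark \ref{remark tau0 is smallest} gives $\alpha_{m,j}(\tau_0) \geq \alpha_{0,1}(\tau_0) > \lambda_*$, which rules out every such channel. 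Since $Y_{0,1}$ is a constant spherical harmonic and the $m=1$, $j=0$ eigenfunctions $\mathfrak a_{1,0,1}$ and $\mathfrak b_{1,0,1}$ are therefore proportional to $\cos(t/\tau_0)$ and $\sin(t/\tau_0)$ (and are automatically mean-zero on $M$), we conclude $\ker D^2\mathcal F(1) = \mathrm{span}\{1,\cos(t/\tau_0),\sin(t/\tau_0)\}$. The main obstacle is the spectral step for $j\geq 1$: one must know that the resonance value $\lambda_*$ is strictly smaller than $\alpha_{0,1}(\tau_0)$, which is precisely the content of Remark \ref{remark tau0 is smallest}; the Hessian computation itself, while fussy, is routine once one exploits the scale-invariance of $\mathcal Q_{g,k}$ to anticipate the cancellations.
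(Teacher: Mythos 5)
Your computation of the Hessian is correct (I checked the formula: the coefficient of $\left(\int_M v\,\ud\mu_g\right)^2$ is indeed $\tfrac{16k\,p_{k,0}}{(n-2k)^2\mathrm{vol}_g(M)}$, and the cancellation $4(n-2k)-4(n+2k)+16k=0$ verifies that constants are null directions), and the overall route — reduce to the eigenvalue equation $P_{g,k}\varphi=\tfrac{n+2k}{n-2k}p_{k,0}\varphi$ on mean-zero functions and diagonalize via Corollary \ref{corollary Pgk eigenvalues} — is essentially the paper's argument, with the minor difference that you compute the full Hessian and check the cross-terms, while the paper works from the outset with mean-zero perturbations.

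There is, however, one genuine gap: the exclusion of all channels with $j\geq 1$ rests entirely on the strict inequality $\alpha_{0,1}(\tau_0)>\alpha_{1,0}(\tau_0)$, and you justify it by citing Remark \ref{remark tau0 is smallest}. But in the paper that remark is itself justified only ``by the inequality \eqref{alpha 01 > alpha 10} below'', i.e.\ by the computation carried out inside the proof of Lemma \ref{lemma kernel gen m} — precisely the statement you are proving. So, as a self-contained argument, the decisive spectral separation is never established; this is in fact the only non-formal step of the proof. The fix is short and you should include it: by the telescoping identity
\begin{equation*}
\alpha_{1,0}=\mathsf P_k(\tau_0^{-2})=\frac{n+2k}{n-2k}\,p_{k,0}=\prod_{\ell=1}^k\left(\frac n2+k-2\ell\right)\left(\frac n2+k-2\ell+2\right),
\qquad
\alpha_{0,1}=\prod_{\ell=1}^k\left(\frac n2+k-2\ell+1\right)^2,
\end{equation*}
and since $(A+1)^2>A(A+2)$ for each factor (with $A=\frac n2+k-2\ell\geq\frac n2-k>0$), the inequality $\alpha_{0,1}>\alpha_{1,0}$ follows; this is exactly the product comparison the paper performs at the end of its proof. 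With that inserted, your argument is complete. (A cosmetic point: your identification $\mathcal Y=\tfrac{2p_{k,0}}{n-2k}\mathrm{vol}_g(M)^{2k/n}$ uses that constants minimize $\mathcal Q_{g,k}$, i.e.\ Lemma \ref{lemma M gamma2 minimizers gen m}; it is worth citing explicitly, as the paper does.)
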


\begin{proof}
Clearly, $1$ is in the kernel of $D^2 \mathcal F(1)$ because $\mathcal F(c) = 0$ for all $c \in \R$. 

On the other hand, for any $\rho\in W^{k,2}(M)$ with $\int_M \rho \ud\mu_g= 0$, we have 
\[ \mathcal E(1 + \eps \rho) = \mathcal E(1) + \mathcal \eps^2 \mathcal E(\rho)  \]
and 
\begin{align*}
    \|1 + \eps \rho\|_{{2_k^*}}^2 &= \left[ \int_M  \left(1 + \eps^2 \frac{2_k^*(2_k^*-1)}{2} \rho^2 + \mathrm{o}(\eps^2) \right)\ud\mu_g\right]^{2/2_k^*} \\
&= \text{vol}_g(M)^{2/2_k^*} + \eps^2 (2_k^*-1) \text{vol}_g(M)^{{2}/{2_k^*} -1} \int_M \rho^2\ud\mu_g + \mathrm{o}(\eps^2).
\end{align*}
Thus, we find
\begin{align}
    \label{D2F(1)}
    \nonumber
    D^2 \mathcal F(1)[\rho, \rho] &= \mathcal E(\rho) - (2_k^*-1) \text{vol}_g(M)^{{2}/{2_k^*} -1}  \mathcal Y \int_M \rho^2 \ud\mu_g\\
    &= \mathcal E(\rho) - \frac{n+2k}{n-2k} \cdot \frac{2}{n-2k} p_{k,0} \int_M \rho^2\ud\mu_g, 
\end{align} 
For the last equality, we used that by Lemma \ref{lemma M gamma2 minimizers gen m}, it holds $\mathcal Y = \mathcal Q_{g,k}(1) = \frac{2}{n-2k} \text{vol}_g(M)^{1 - 2/2_k^*} p_{k,0}$. 

Thus, recalling \eqref{E definition}, $\varphi \in W^{k,2}(M)$ with $\int_M \varphi\ud\mu_g = 0$ is in the kernel of $D^2 \mathcal F(1)$ if and only if 
\begin{equation}
    \label{lin eq proof gen m}
     P_{g,k} (\varphi) = \frac{n+2k}{n-2k} p_{k,0} \varphi.   
\end{equation}
We have already checked in \eqref{sin Pm} that the definition \eqref{tau 0 definition general m} of $\tau_0$ ensures that $\varphi(t) = \sin({t}/{\tau_0})$ solves \eqref{lin eq proof gen m}. By exactly the same argument, $\varphi(t) = \cos({t}/{\tau_0})$ solves \eqref{lin eq proof gen m}. 

It remains to justify that there can be no functions in the kernel of $D^2 \mathcal F(1)$ which are linearly independent of $1$, $\sin({t}/{\tau_0})$ and $\cos({t}/{\tau_0})$.
In view of \eqref{lin eq proof gen m}, it therefore remains to check that $\alpha_{m,j} \neq \alpha_{1,0}$ for all $(m,j) \neq (1,0)$. Since, by \eqref{alpha k j definition}, the eigenvalues $\alpha_{m,j}$ are strictly increasing in $m$ and $j$, this follows if we can show 
\begin{equation}
    \label{alpha 01 > alpha 10}
    \alpha_{0,1} > \alpha_{1,0}. 
\end{equation}
But since 
\[ \alpha_{1,0} = \frac{n+2k}{n-2k} p_{k,0} = 2^{-2k} \frac{n+2k}{n-2k} \prod_{\ell = 1}^k (n + 2k - 4 \ell)^2 = (n-2k)(n-2k+4)^2 \cdots (n+2k -4)^2 (n+2k)
\] 
and 
\[
\alpha_{0,1} = 2^{-2k} \prod_{\ell = 1}^k (2 + n + 2k - 4 \ell)^2, 
\]
we obtain
\begin{align*}
    \frac{\alpha_{0,1}}{\alpha_{1,0}} &= \frac{(n-2k +2)^2}{(n-2k)(n-2k + 4)} \cdot  \frac{(n-2k +6)^2}{(n-2k + 4)(n-2k+8)} \cdot \dots  \cdot \frac{(n+2k-2)^2}{(n+2k-4)(n+2k)}  \\
    &= \prod_{\ell=1}^k \frac{(n+2k+2 - 2\ell)^2}{(n+2k-4\ell)(n+2k-4 \ell + 4)} > 1,
\end{align*}
because of the inequality $\frac{N^2}{(N+2)(N-2)} = \frac{N^2}{N^2-4} > 1$, applied with $N := n+2k+2 - 2\ell$ for every $\ell \in\{ 1,\dots,k\}$. Thus \eqref{alpha 01 > alpha 10} is shown, and the proof is complete. 
\end{proof}

\subsection{The secondary non-degeneracy condition}
\label{subsec secondary nondeg}

    We can now start to give the core argument for the proof of Theorem \ref{AS4_examples}. 
    
    As in \cite{F}, our goal is to verify a 'secondary nondegeneracy condition' using an iterative refinement of Bianchi and Egnell's 
    classical strategy. This strategy consists in decomposing a candidate sequence into a main part and a remainder part orthogonal 
    to it. The orthogonality then implies an improved spectral estimate which can be used to conclude in the classical (i.e., non-degenerate) setting. Because of the degeneracy given 
    by Lemma \ref{lemma kernel gen m}, in our setting, we need to further decompose the remainder into a main part which turns out to 
    be in $\ker D^2 \mathcal F(1)$, and a secondary remainder. At this point only, one has precise enough information to conclude by 
    spectral estimates. 
    
   The first and more standard step of this strategy is contained in the following lemma. 

    \begin{lemma}
\label{lemma frank 1}
Let $n,k \in \mathbb N$ with $n>2k$ and let $M = \mathbb S^1(\tau_0) \times \mathbb S^{n-1}$ 
    with $\tau_0$ defined by \eqref{tau 0 definition general m} furnished with the product metric $g\in{\rm Met}^{k}(M)$ given by \eqref{productmetric}. Assume that $\{u_m\}_{m\in\mathbb N} \subset W^{k,2}(M)$ is 
a sequence such that $\mathcal Q_{g,k}(u_m) \to \mathcal Y$ and $\|u_m\|_{2_k^*} = \mathrm{vol}_g(M)^{1/2_k^*}$. Then, there 
are sequences $\{\lambda_m\}_{m\in\mathbb N} \subset \R$ and $\{\rho_m\}_{m\in\mathbb N} \subset W^{k,2}(M)$ such that $\lambda_m \to \pm 1$, $\int_M \rho_m = 0$,  
$\mathcal E(\rho_m) \to 0$, and up to extracting a subsequence, one has
\[ u_m = \lambda_m (1 + \rho_m). \]
    \end{lemma}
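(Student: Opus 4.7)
The plan is to extract a strongly convergent subsequence whose limit must be $\pm 1$, and then set $\lambda_m$ to the volume average of $u_m$ so that the remainder $\rho_m = u_m/\lambda_m - 1$ automatically has zero mean and inherits $W^{k,2}(M)$-convergence to $0$. The argument is short once the strong convergence is in hand; the real work is in the compactness step, which relies on the strict binding inequality from Lemma~\ref{lemma strict binding ineq}.

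First, I would observe that $\mathcal Q_{g,k}(-u)=\mathcal Q_{g,k}(u)$, so replacing $u_m$ by $-u_m$ when necessary and passing to a subsequence, I may assume that $\sigma_m := \mathrm{vol}_g(M)^{-1}\int_M u_m\ud\mu_g$ has a fixed sign. Since $\mathcal Q_{g,k}(u_m)\to \mathcal Y>0$ and $\|u_m\|_{2_k^*}$ is fixed to $\mathrm{vol}_g(M)^{1/2_k^*}$, the energy $\mathcal E(u_m)$ is bounded. By the coercivity of $P_{g,k}$ established in Step~1 of the proof of Lemma~\ref{lemma M gamma2 minimizers gen m}, this forces $\{u_m\}$ to be bounded in $W^{k,2}(M)$.

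Next, I would extract a strongly convergent subsequence. The strict binding inequality $\mathcal Y<\mathcal S_{n,k}$ from Lemma~\ref{lemma strict binding ineq} rules out concentration of mass at a point (``bubbling'') in the critical Sobolev setting. Combined with the concentration-compactness reasoning behind Lemma~\ref{lemma compactness minseq} (whose proof via \cite{Mazumdar2016} adapts to signed sequences, since the subcritical binding gap is what excludes bubbles, not positivity), this yields, up to subsequence, a strong $W^{k,2}(M)$-limit $v$ which is a minimizer normalized by $\|v\|_{2_k^*}=\mathrm{vol}_g(M)^{1/2_k^*}$. By Lemma~\ref{lemma M gamma2 minimizers gen m} together with the sign invariance of $\mathcal Q_{g,k}$, the function $v$ must be constant, and the volume normalization forces $v=\pm 1$; the sign agrees with the chosen sign of $\sigma_m$.

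Finally, I would set $\lambda_m := \mathrm{vol}_g(M)^{-1}\int_M u_m\ud\mu_g$ and $\rho_m := u_m/\lambda_m - 1$. The identity $u_m=\lambda_m(1+\rho_m)$ is immediate, $\int_M \rho_m\ud\mu_g=0$ holds by construction, and $\lambda_m\to \pm 1$ follows from $u_m\to \pm 1$ in $W^{k,2}(M)$. The same strong convergence gives $\rho_m\to 0$ in $W^{k,2}(M)$, so $\mathcal E(\rho_m)\to 0$ by continuity of the quadratic form $u\mapsto \int_M u\,P_{g,k}(u)\ud\mu_g$ on $W^{k,2}(M)$. The main obstacle throughout is really the strong compactness step: in the critical Sobolev regime only Lemma~\ref{lemma strict binding ineq} prevents the loss of mass, and once this is secured the rest of the argument is essentially bookkeeping.
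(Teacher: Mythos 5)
Your proposal is correct and follows essentially the same route as the paper: strong $W^{k,2}(M)$-compactness of the minimizing sequence via Lemma~\ref{lemma compactness minseq} (whose applicability rests on the coercivity and strict binding inequality checked in Step~1 of Lemma~\ref{lemma M gamma2 minimizers gen m} and Lemma~\ref{lemma strict binding ineq}), identification of the limit as $\pm 1$ by Lemma~\ref{lemma M gamma2 minimizers gen m} and the normalization, and then the decomposition $\lambda_m=\overline{u}_m$, $\rho_m=u_m/\overline{u}_m-1$. You merely make explicit some points the paper leaves implicit (boundedness from coercivity, the handling of possibly sign-changing $u_m$), which is fine.
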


    \begin{proof}
    By Lemma \ref{lemma M gamma2 minimizers gen m}, the only minimizers of $\mathcal Y$ are the constants. By 
    Lemma \ref{lemma compactness minseq}, up to the extraction of a subsequence every minimizing sequence converges to a minimizer 
    of $\mathcal Y$, strongly in $W^{k,2}(M)$. Notice that $M$ satisfies the assumptions of Lemma \ref{lemma compactness minseq}, 
    as we have checked in the proof of the first step of Lemma \ref{lemma M gamma2 minimizers gen m}.

 In view of the normalization of $u_m$, we thus must have $u_m \to \pm 1$ in $W^{k,2}(M)$. Setting $\lambda_m := 
 \overline{u}_m := \text{vol}_g(M)^{-1} \int_M u_m\ud\mu_g$ and $\rho_m := \frac{u_m}{\overline{u}_m} - 1$, the assertion of the lemma follows. 
    \end{proof}

Now we derive a more precise expansion. For sequences $\{u_m\}_{m\in\mathbb N}$ such that $\mathcal E(u_m) - \mathcal Y \|u_m\|_{2_k^*}^2$ goes to 
zero superquadratically in $\mathcal E(u_m - \overline{u}_m)$, we show that the subleading term is necessarily proportional 
to the function 
\begin{equation}
    \label{g cos definition}
    \phi(t, \omega) := \cos\left(\frac{t}{\tau_0}\right) \in \ker D^2 \mathcal F(1),
\end{equation}
up to a rotation in the $t$-coordinate.

    \begin{lemma}
        \label{lemma frank 2}
Let $n,k \in \mathbb N$ with $n>2k$ and let $M = \mathbb S^1(\tau_0) \times \mathbb S^{n-1}$ 
    with $\tau_0$ defined by \eqref{tau 0 definition general m} furnished with the product metric $g\in{\rm Met}^{k}(M)$ given by \eqref{productmetric}.
        Let $\{u_m\}_{m\in\mathbb N}, \{\rho_m\}_{m\in\mathbb N} \subset W^{k,2}(M)$ be sequences such that $u_m = 1 + \rho_m$, where $\int_M \rho_m\ud\mu_g = 0$ 
        and $\mathcal E(\rho_m) \to 0$ as $ m \to \infty$. Suppose that 
        \[ \frac{\mathcal E(u_m) - \mathcal Y \|u_m\|_{2_k^*}^2}{\mathcal E(\rho_m)} \to 0. \]
        Then, up to extracting a subsequence and a rotation in the $t$-coordinate, one has
        \begin{equation}
            \label{frank 2 expansion}
            u_m = 1+ \rho_m = 1 + \xi_m( \phi + R_m), 
        \end{equation} 
        where $\xi_m \to 0$, $\phi\in \ker D^2 \mathcal F(1)$ is defined by \eqref{g cos definition}, and $\{R_m\}_{m\in\mathbb N} \subset W^{k,2}(M)$ satisfies
        $\int_M R_m\ud\mu_g = \int_M R_m \sin({t}/{\tau_0})\ud\mu_g =  \int_M R_m \cos({t}/{\tau_0})\ud\mu_g = 0$, and $\mathcal E(R_m) \to 0$ as $m\to\infty$.  
    \end{lemma}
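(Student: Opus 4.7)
The plan is to apply a Bianchi--Egnell type spectral decomposition refined by the two-dimensional degeneracy identified in Lemma~\ref{lemma kernel gen m}. Set $K_0 := \operatorname{span}\bigl(\sin(t/\tau_0),\,\cos(t/\tau_0)\bigr)$, which consists of the mean-zero elements of $\ker D^2 \mathcal F(1)$, and decompose orthogonally in $L^2(M)$
\[ \rho_m = \sigma_m + \rho_m^\perp, \qquad \sigma_m := a_m \sin(t/\tau_0) + b_m \cos(t/\tau_0). \]
Since both summands of $\sigma_m$ are eigenfunctions of $P_{g,k}$ with the same eigenvalue $\alpha_{1,0}$, this decomposition is simultaneously orthogonal with respect to the energy inner product $(v,w) \mapsto \int_M v\, P_{g,k}(w)\,\mathrm{d}\mu_g$, a compatibility that will be crucial below.

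Next, I would Taylor expand $\mathcal F$ at $u = 1$. Using the computation \eqref{D2F(1)}, the vanishing of $D\mathcal F(1)$ on mean-zero perturbations (which holds because $\int \rho_m\,\mathrm{d}\mu_g = 0$), and the coercivity of $P_{g,k}$ (so that $\mathcal E(\cdot) \simeq \|\cdot\|_{W^{k,2}(M)}^2$ on the mean-zero subspace), one obtains
\[ \mathcal F(u_m) = D^2 \mathcal F(1)[\rho_m, \rho_m] + o(\mathcal E(\rho_m)). \]
Because $\sigma_m \in \ker D^2 \mathcal F(1)$, the bilinear form collapses to $D^2 \mathcal F(1)[\rho_m^\perp, \rho_m^\perp]$. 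Combining Corollary~\ref{corollary Pgk eigenvalues} with the strict inequality $\alpha_{m,j} > \alpha_{1,0}$ established in Lemma~\ref{lemma kernel gen m} for every $(m,j) \notin \{(0,0), (1,0)\}$, a standard spectral gap argument yields $c_0 > 0$ with
\[ D^2 \mathcal F(1)[\rho_m^\perp, \rho_m^\perp] \geq c_0\,\mathcal E(\rho_m^\perp). \]

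Putting the above displays together and invoking the hypothesis $\mathcal F(u_m) = o(\mathcal E(\rho_m))$ forces $\mathcal E(\rho_m^\perp) = o(\mathcal E(\rho_m))$, and hence $\mathcal E(\sigma_m)/\mathcal E(\rho_m) \to 1$. Rewriting $\sigma_m = \sqrt{a_m^2 + b_m^2}\,\cos\bigl((t - \theta_m)/\tau_0\bigr)$ and performing the isometric rotation $t \mapsto t + \theta_m$ of $\mathbb S^1(\tau_0)$ aligns the main mode with $\phi$. Setting $\xi_m := b_m$ (after this rotation) and $R_m := \xi_m^{-1} \rho_m^\perp$ then produces the decomposition \eqref{frank 2 expansion}. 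The orthogonality of $R_m$ to $1$, $\sin(t/\tau_0)$, and $\cos(t/\tau_0)$ is built in by construction, while $\mathcal E(R_m) = \xi_m^{-2}\,\mathcal E(\rho_m^\perp) \to 0$ follows from $\mathcal E(\sigma_m) = \alpha_{1,0}\,\xi_m^2\,\|\cos(t/\tau_0)\|_{L^2(M)}^2 \simeq \xi_m^2$ together with $\mathcal E(\rho_m^\perp) = o(\mathcal E(\sigma_m))$.

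The main obstacle I anticipate is making the remainder $o(\mathcal E(\rho_m))$ in the Taylor expansion rigorous. Since $2_k^* = 2n/(n-2k)$ is not in general an integer, the cubic and higher remainders in the expansion of $\|1+\rho\|_{2_k^*}^2$ require a careful pointwise bound of the form $|(1+\rho)^{2_k^*} - 1 - 2_k^*\,\rho - \tfrac{1}{2}\,2_k^*(2_k^*-1)\,\rho^2| \lesssim |\rho|^{\min(3,\,2_k^*)} + |\rho|^{2_k^*}$, combined with the Sobolev embedding $W^{k,2}(M) \hookrightarrow L^{2_k^*}(M)$ and $\|\rho_m\|_{W^{k,2}(M)} \to 0$, which in turn holds by coercivity once $\mathcal E(\rho_m) \to 0$.
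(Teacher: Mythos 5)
Your argument is correct and follows essentially the same route as the paper: expand $\mathcal E(u_m)-\mathcal Y\|u_m\|_{2_k^*}^2$ to second order (with the same pointwise remainder bound and Sobolev/coercivity control), deduce that the quadratic form $\rho\mapsto \mathcal E(\rho)-\tfrac{2}{n-2k}\alpha_{1,0}\int_M\rho^2\ud\mu_g$ is $o(\mathcal E(\rho_m))$ at $\rho_m$, and then use the spectral gap off $\operatorname{span}\{1,\cos(t/\tau_0),\sin(t/\tau_0)\}$ together with a rotation in $t$ to extract the decomposition. The only difference is that you spell out the final spectral/rotation step that the paper delegates to the cited lemmas of \cite{F}, which is a welcome but not substantively different addition.
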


    \begin{proof}
    By $\int_M \rho_m\ud\mu_g = 0$, we clearly have
    \[\mathcal E(u_m) = \mathcal E(1) + \mathcal E(\rho_m). \]
    Moreover, for any $a \in \R$ and $\nu>2$ the pointwise expansion below holds
    \[|1 + a|^{\nu} = 1 + \nu a + \frac{\nu(\nu-1)}{2} a^2 + \mathcal O(|a|^{\min\{3, \nu\}}),\]
    which yields
        \[ \int_M |u_m|^{2_k^*}\ud\mu_g = \int_M \ud\mu_g + \frac{2_k^*(2_k^*-1)}{2} \int_M \rho_m^2\ud\mu_g + \mathrm{o}(\mathcal E(\rho_m)),  \]
    and hence 
    \[ \|u_m\|_{2_k^*}^2 = \left( \int_M \ud\mu_g\right)^{2/2_k^*} + (2_k^*-1) \left( \int_M \ud\mu_g\right)^{{2}/{2_k^*}-1} \int_M \rho_m^2\ud\mu_g + \mathrm{o} (\mathcal E(\rho_m)).  \]
    Since \[\mathcal Y = \frac{2}{n-2k} {\mathcal E(1)}{\left(\int_M \ud\mu_g\right)^{-2/2_k^*}} = \left(\int_M \ud\mu_g\right)^{1 - {2}/{2_k^*}} \alpha_{0,0}\]
    and $\alpha_{1,0} = (2_k^*-1) \alpha_{0,0}$, it follows
    \begin{align}
    \label{quotient zero}
        \mathrm{o}(1) = \frac{\mathcal E(u_m) - \mathcal Y \|u_m\|_{2_k^*}^2}{\mathcal E(\rho_m)} = \frac{\mathcal E(\rho_m) - \frac{2}{n-2k} \alpha_{1,0} \int_M \rho_m^2 \ud\mu_g}{\mathcal E(\rho_m)}. 
    \end{align}
The quadratic form $\rho \mapsto \mathcal E(\rho) - \frac{2}{n-2k} \alpha_{1,0} \int_M \rho^2\ud\mu_g$ vanishes on the subspace spanned by $\cos({t}/{\tau_0})$ and $\sin({t}/{\tau_0})$. Since $\alpha_{m,j} > \alpha_{1,0}$ for $m \geq 2$ or $j \geq 1$, it is positive definite and equivalent to $\mathcal E$ on the orthogonal complement of $1$, $\cos({t}/{\tau_0})$ and $\sin({t}/{\tau_0})$. Together with $\int_M \rho_m\ud\mu_g = 0$, it is easy to deduce \eqref{frank 2 expansion} from \eqref{quotient zero} by arguing as in \cite[proof of Lemmas 5 and 9]{F}.
    \end{proof}

With the refined expansion from Lemma \ref{lemma frank 2} at hand, we can now expand $\mathcal E(u_m)$ up to fourth order and derive the following 'second-order' stability inequality: 

    \begin{lemma}
        \label{lemma frank 3}
Let $n,k \in \mathbb N$ with $n>2k$ and let $M = \mathbb S^1(\tau_0) \times \mathbb S^{n-1}$ 
    with $\tau_0$ defined by \eqref{tau 0 definition general m} furnished with the product metric $g\in{\rm Met}^{k}(M)$ given by \eqref{productmetric}.
        Let $\{u_m\}_{m\in\mathbb N}, \{\rho_m\}_{m\in\mathbb N} \subset W^{k,2}(M)$ be sequences such that $u_m = 1 + \rho_m$, where $\int_M \rho_m\ud\mu_g = 0$ and $\mathcal E(\rho_m) \to 0$ as $ m \to \infty$. Then, one has 
        \[ \mathcal E(u_m) - \mathcal Y \|u_m\|_{2_k^*}^2 \geq (c + \mathrm{o}(1)) \mathcal E(\rho_m)^2, \]
        where 
        \[ c:= \frac{n-2k}{2}\cdot \frac{(2_k^*-2)}{8 \,  \mathrm{vol}_g(M)} \alpha_{1,0}^{-1} \left(  (2_k^*+1) -  \frac{\alpha_{1,0}} {\alpha_{2,0} - \alpha_{1,0}} (2_k^*-2)  \right)  > 0. \]
    \end{lemma}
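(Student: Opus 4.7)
The plan is to argue by contradiction and invoke the refined decomposition of Lemma \ref{lemma frank 2}, then to expand $\mathcal E(u_m) - \mathcal Y \|u_m\|_{2_k^*}^2$ up to order $\xi_m^4$ and to minimize over the subleading degrees of freedom. First I would reduce: if the quotient $(\mathcal E(u_m) - \mathcal Y\|u_m\|_{2_k^*}^2)/\mathcal E(\rho_m)$ does not tend to zero, the conclusion is immediate since $\mathcal E(\rho_m)\to 0$. Otherwise Lemma \ref{lemma frank 2} produces, after passing to a subsequence and rotating in the $t$-variable, a decomposition $\rho_m = \xi_m(\phi + R_m)$ with $\xi_m\to 0$, $\phi(t,\omega)=\cos(t/\tau_0)$, $\mathcal E(R_m)\to 0$, and $R_m$ $L^2$-orthogonal to each of $1,\cos(t/\tau_0),\sin(t/\tau_0)$.

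Next I would carry out a fourth-order expansion. Since $\int_M \rho_m\,\ud\mu_g = 0$ and $P_{g,k}(1)$ is constant, $\mathcal E(u_m) = \mathcal E(1) + \mathcal E(\rho_m)$. Setting $V=\operatorname{vol}_g(M)$, a pointwise Taylor expansion of $(1+\rho_m)^{2_k^*}$ with the standard remainder control gives
\[
\|u_m\|_{2_k^*}^{2_k^*} = V + \tbinom{2_k^*}{2}\!\!\int_M\! \rho_m^2\,\ud\mu_g + \tbinom{2_k^*}{3}\!\!\int_M\! \rho_m^3\,\ud\mu_g + \tbinom{2_k^*}{4}\!\!\int_M\! \rho_m^4\,\ud\mu_g + \mathrm{o}(\xi_m^4),
\]
and raising to the power $2/2_k^*$ produces the corresponding expansion of $\|u_m\|_{2_k^*}^2$. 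Using $\mathcal Y V^{2/2_k^*} = \mathcal E(1)$ and $\alpha_{1,0} = (2_k^*-1)\alpha_{0,0}$, the quadratic piece combines with $\mathcal E(\rho_m)$ into $\mathcal E(\rho_m) - \frac{2\alpha_{1,0}}{n-2k}\|\rho_m\|_2^2$. Since $\phi$ is an eigenfunction of $P_{g,k}$ with eigenvalue $\alpha_{1,0}$ and $\int_M \phi R_m\,\ud\mu_g = 0$, substituting the decomposition and using the elementary evaluations $\int_M \phi^2\,\ud\mu_g = V/2$, $\int_M \phi^3\,\ud\mu_g = 0$ (cubic self-terms vanish), $\int_M \phi^4\,\ud\mu_g = 3V/8$, together with $\int_M \phi^2 R_m\,\ud\mu_g = \tfrac{1}{2}\int_M \cos(2t/\tau_0)R_m\,\ud\mu_g$, yields, writing $J(R) := \mathcal E(R) - \frac{2\alpha_{1,0}}{n-2k}\|R\|_2^2$,
\[
\mathcal E(u_m) - \mathcal Y \|u_m\|_{2_k^*}^2 = \xi_m^2 J(R_m) - \tfrac{(2_k^*-2)\alpha_{1,0}}{n-2k} \xi_m^3\!\!\int_M\!\!\cos(2t/\tau_0)R_m\,\ud\mu_g + \tfrac{(2_k^*-2)(2_k^*+1)\alpha_{1,0} V}{16(n-2k)}\xi_m^4 + \mathrm{o}(\xi_m^4).
\]

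Finally I would minimize over $R_m$. Writing $R_m = a_m\psi + R_m'$ with $\psi(t,\omega) = \cos(2t/\tau_0)$ (an eigenfunction of $P_{g,k}$ with eigenvalue $\alpha_{2,0}$) and $R_m'$ $L^2$-orthogonal to $\psi$, one has $J(R_m) = \tfrac{(\alpha_{2,0}-\alpha_{1,0})V}{n-2k}a_m^2 + J(R_m')$ with $J(R_m')\geq 0$. Dropping $J(R_m')$ and completing the square in $a_m$ using $\int_M \psi R_m\,\ud\mu_g = a_m V/2$ produces exactly the announced lower bound with the stated constant $c$, and matching with $\mathcal E(\rho_m)^2 \sim \xi_m^4 (\alpha_{1,0}V/(n-2k))^2$ identifies the coefficient. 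Positivity of $c$ is then verified from the explicit eigenvalue formula \eqref{alpha k j definition} at $\tau=\tau_0$ and the defining identity \eqref{tau 0 definition general m}. The main obstacle is the bookkeeping of the fourth-order remainders: I must rule out spurious contributions like $\xi_m^3\|R_m\|_2^2$ and $\xi_m^3|\int_M \phi R_m^2\,\ud\mu_g|$ that could in principle compete with the leading $\xi_m^4$ term. These are absorbed using the spectral-gap bound $\|R_m\|_2^2 \lesssim J(R_m)$ (valid because $R_m$ is orthogonal to the lowest eigenspaces of $P_{g,k}$) combined with Young's inequality and the smallness of $\xi_m$, so that under the contradiction hypothesis they are genuinely $\mathrm{o}(\xi_m^4)$.
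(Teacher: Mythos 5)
Your overall route coincides with the paper's: after reducing to the case where $(\mathcal E(u_m)-\mathcal Y\|u_m\|_{2_k^*}^2)/\mathcal E(\rho_m)\to 0$ (a reduction the paper leaves implicit when it invokes Lemma \ref{lemma frank 2}), you use the decomposition $\rho_m=\xi_m(\phi+R_m)$, expand to fourth order, split off the $\cos(2t/\tau_0)$-component of $R_m$, use the spectral gap on the remaining part, and complete the square in that coefficient; your intermediate identity and the resulting constant agree with the paper's \eqref{frank3 expansion1} and \eqref{bm square}, and your treatment of the cross terms $\xi_m^3\|R_m\|_2^2$, $\xi_m^3\int_M\phi R_m^2\ud\mu_g$ via the spectral-gap bound is exactly the paper's $\mathrm{o}(\xi_m^4+\xi_m^2\mathcal E(R_m))$ bookkeeping.

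The one genuine gap is the step ``a pointwise Taylor expansion of $(1+\rho_m)^{2_k^*}$ with the standard remainder control gives'' the global fourth-order expansion of $\int_M|u_m|^{2_k^*}\ud\mu_g$. For $n>4k$ one has $2_k^*<4$ (and for $n>6k$ even $2_k^*<3$), so $t\mapsto|1+t|^{2_k^*}$ does not admit a fourth-order Taylor expansion with uniformly controlled remainder near $t=-1$, and $\rho_m\in W^{k,2}(M)\hookrightarrow L^{2_k^*}(M)$ gives no pointwise smallness; in fact $\int_M\rho_m^3\ud\mu_g$ and $\int_M\rho_m^4\ud\mu_g$ need not even be finite a priori. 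This is precisely why the paper splits $M$ into $M_m=\{|\rho_m|<1/2\}$ and its complement, expands to fourth order only on $M_m$, and uses $\mathrm{vol}_g(M\setminus M_m)\lesssim\xi_m^{2_k^*}$ together with a lower-order expansion on the complement (the device borrowed from \cite{Koenig2023}) to absorb that region into the error $\mathrm{o}(\xi_m^4+\xi_m^2\mathcal E(R_m))$; your plan needs this truncation, and your closing paragraph addresses a different remainder issue, not this one. A secondary point: the positivity of $c$, i.e. $(2_k^*+1)(\alpha_{2,0}-\alpha_{1,0})>(2_k^*-2)\alpha_{1,0}$, is not a direct check from \eqref{alpha k j definition} for general $k$; the paper proves it using the convexity of $\mathsf P_k$ (giving $\alpha_{2,0}\geq 2\alpha_{1,0}-\alpha_{0,0}$), the identity $\alpha_{1,0}=(2_k^*-1)\alpha_{0,0}$ coming from \eqref{tau 0 definition general m}, and the elementary inequality $2-\tfrac{1}{\nu-1}>\tfrac{2\nu-1}{\nu+1}$ for $\nu>2$; you should supply an argument of this kind rather than merely asserting the verification.
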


    \begin{proof}
By Lemma \ref{lemma frank 2}, we can write $u_m = 1+ \rho_m =  1 + \xi_m ( \phi + R_m)$ with $\xi_m \to 0$ and $\mathcal E(R_m) \to 0$ as $m\to\infty$. 
    
Again we follow the strategy of \cite{F}, but we apply a technical simplification of the argument from \cite{Koenig2023} (see also \cite{FP}) to deal with the set where $R_m$ has pointwise large values. 

Let $M_m \subset M$ be the subset of points $x \in M$ such that $|\rho_m(x)| < \frac{1}{2}$. In particular, on $M_m$, the function $1 +\rho_m$ takes values in $(1/2, 3/2)$, where $t \mapsto |t|^{2_k^*}$ is smooth. In particular, by Taylor's theorem, we can expand to fourth order to get
\begin{align*}
    \int_{M_m} |1 + \rho_m|^{2_k^*}\ud\mu_g &= \int_{M_m} \ud\mu_g + 2_k^* \int_{M_k} \rho_m\ud\mu_g + \frac{2_k^*(2_k^*-1)}{2} \int_{M_m} \rho_m^2\ud\mu_g \\
    &+\frac{2_k^*(2_k^*-1)(2_k^*-2)}{6} \int_{M_m} \rho_m^3\ud\mu_g + \frac{2_k^*(2_k^*-1)(2_k^*-2)(2_k^*-3)}{24} \int_{M_m} \rho_m^4\ud\mu_g + \mathrm{o}(\xi_m^4).  
\end{align*} 
For the complementary set, we directly have 
\[ \int_{M\setminus M_m} \left( \frac{1}{2} \right)^{2_k^*}\ud\mu_g\leq \int_{M\setminus M_m} \left| \rho_m \right|^{2_k^*}\ud\mu_g \lesssim \mathcal E(\xi_m \phi)^{2_k^*/2} + \mathrm{o}(\xi_m^{2_k^*}) = \xi_m^{2_k^*}(\mathcal E(\phi)^{{2_k^*}/{2}}
+ \mathrm{o}(1)), \]
and hence $\text{vol}_g(M\setminus M_m) \lesssim \xi_m^{2_k^*}$. Thus we can use a second-order Taylor expansion to deduce, by estimates analogous to those in \cite[p.11]{Koenig2023}, that 
\begin{align*}
     &\int_{C_m} |1 + \rho_m|^{2_k^*}\ud\mu_g\\
     &= \int_{M\setminus M_m} \ud\mu_g + 2_k^* \int_{M\setminus M_m} \rho_m\ud\mu_g + \frac{2_k^*(2_k^*-1)}{2} \int_{M\setminus M_m} \rho_m^2\ud\mu_g + \frac{2_k^*(2_k^*-1)(2_k^*-2)}{6} \int_{M\setminus M_m} \rho_m^3\ud\mu_g  \\
    &\quad + \frac{2_k^*(2_k^*-1)(2_k^*-2)(2_k^*-3)}{24} \int_{M\setminus M_m} \rho_m^4\ud\mu_g + \mathrm{o}(\xi_m^4 + \xi_m^2 \mathcal E(R_m)).
\end{align*}
Adding up the two expansions above, we obtain 
\begin{align*}
    \|u_m\|_{2_k^*}^2 &= \text{vol}_g(M)^{{2}/{2_k^*}} \\
    & \quad + \text{vol}_g(M)^{{2}/{2_k^*}-1} (2_k^*-1) \left( \int_M \rho_m^2\ud\mu_g +  \frac{2_k^*-2}{3} \int_M \rho_m^3\ud\mu_g + \frac{(2_k^*-2)(v-3)}{12} \int_M \rho_m^4\ud\mu_g
 \right) \\
 &\quad -  \text{vol}_g(M)^{{2}/{2_k^*}-2} \frac{(q-2)(2_k^*-1)^2}{4} \left( \int_M \rho_m^2 \right)^2 +  o\left((\xi_m^4 + \xi_m^2 \mathcal E(R_m)\right),
\end{align*} 
which with $\mathcal E(u_m) = \mathcal E(1) + \mathcal E(\rho_m)$, and again recalling 
\[\mathcal Y \,  \text{vol}_g(M)^{{2}/{2_k^*}-1} (2_k^*-1) = \frac{2}{n-2k}\alpha_{0,0}(2_k^*-1) = \frac{2}{n-2k} \alpha_{1,0},\] 
gives us 
\begin{align}
    \mathcal E(u_m) - \mathcal Y \|u_m\|_{2_k^*}^2 &= \mathcal E(\rho_m) - \frac{2}{n-2k} \alpha_{1,0} \left( \int_M \rho_m^2\ud\mu_g +  \frac{2_k^*-2}{3} \int_M \rho_m^3\ud\mu_g + \frac{(2_k^*-2)(2_k^*-3)}{12} \int_M \rho_m^4 \ud\mu_g\right) \nonumber \\
    &\quad + \frac{2}{n-2k} \alpha_{1,0} \text{vol}_g(M)^{-1} \frac{(2_k^*-1)(2_k^*-2)}{4}  \left( \int_M \rho_m^2 \ud\mu_g\right)^2 +  \mathrm{o}\left((\xi_m^4 + \xi_m^2 \mathcal E(R_m)\right) \label{frank3 expansion1}. 
\end{align}
Now we expand the terms on the right more precisely, according to the refined decomposition $\rho_m = \xi_m (\phi + R_m)$ from Lemma \ref{lemma frank 2}. Clearly, by orthogonality, 
\[ \mathcal E(\rho_m) - \frac{2}{n-2k} \alpha_{1,0}  \int_M \rho_m^2\ud\mu_g = \xi_m^2\mathcal E(R_m) - \frac{2}{n-2k} \alpha_{1,0} \xi_m^2 \int_M R_m^2 \ud\mu_g \]
because $\phi\in W^{k,2}(M)$ is in the kernel of this quadratic form. Moreover, since $\int_M \phi^3\ud\mu_g = 0$,
\[ \int_M \rho_m^3\ud\mu_g  = 3 \xi_m^3 \int_M \phi^2 R_m\ud\mu_g + \mathrm{o}\left((\xi_m^4 + \xi_m^2 \mathcal E(R_m)\right). \]
Finally, in the power-four terms, only the leading term $\xi_m \phi$ is relevant in the sense that 
\[ \int_M \rho_m^4\ud\mu_g = \xi_m^4 \int_M \phi^4\ud\mu_g + \mathrm{o}(\xi_m^4) \quad \text{ and } \quad \left( \int_M \rho_m^2\ud\mu_g \right)^2 
= \xi_m^4 \left( \int_M \phi^2 \ud\mu_g\right)^2+ \mathrm{o}(\xi_m^4).  \]
In view of 
\[ \phi^2(s)= \frac{1}{2} + \frac{1}{2} \cos\left(\frac{2s}{\tau_0}\right), \]
we further decompose
\[ R_m = b_m \cos\left(\frac{2s}{\tau_0}\right) + S_k \quad {\rm with} \quad  \int_M S_k \cos\left(\frac{2s}{\tau_0}\right)\ud\mu_g = 0.\]
Then, one has
\[\int_M \phi^2 R_m\ud\mu_g = \frac{1}{2} b_m \int_M \cos^2 \left(\frac{2s}{\tau_0}\right)\ud\mu_g = b_m \frac{\text{vol}_g(M)}{4}.  \]
Moreover, we can  estimate
\begin{align*}
     &\mathcal E(\xi_m R_m) - \frac{2}{n-2k} \alpha_{1,0} \int_M (\xi_m R_m)^2\ud\mu_g + \mathrm{o}( \mathcal \xi_m^2 \mathcal E(R_m)) \\
     &= \frac{2}{n-2k}(\alpha_{2,0} - \alpha_{1,0} +\mathrm{o}(1)) \xi_m^2 b_m^2 \frac{\text{vol}_g(M)}{2} + (1 + \mathrm{o}(1)) \xi_m^2 \mathcal E(S_k) - \frac{2}{n-2k} \alpha_{1,0} \xi_m^2 \int_M S_k^2\ud\mu_g \\
     &\geq \frac{2}{n-2k} (\alpha_{2,0} - \alpha_{1,0} +\mathrm{o}(1)) \xi_m^2 b_m^2 \frac{\text{vol}_g(M)}{2}.  
\end{align*}
Finally, we can compute explicitly the numerical values of 
\[ \int_M \phi^4\ud\mu_g = \frac{3}{8} \text{vol}_g(M) \quad \text{ and } \quad \left(\int_M \phi^2\ud\mu_g \right)^2 = \frac{\text{vol}_g(M)^2}{4}. \]
Inserting all of this back into \eqref{frank3 expansion1}, we arrive at 
\begin{align}
    & \qquad \frac{n-2k}{2} \left(  \mathcal E(u_m) - \mathcal Y \|u_m\|_{2_k^*}^2 \right) \nonumber \\
    &\geq (\alpha_{2,0} - \alpha_{1,0} +\mathrm{o}(1)) \xi_m^2 b_m^2 \frac{\text{vol}_g(M)}{2}  - \alpha_{1,0} \frac{q-2}{4} \text{vol}_g(M) \xi_m^3 b_m \nonumber \\
     &+ \alpha_{1,0} \text{vol}_g(M) \xi_m^4 \left( \frac{(2_k^*-1)(2_k^*-2)}{16} - \frac{(2_k^*-2)(2_k^*-3)}{32} \right) + \mathrm{o}(\xi_m^4) \nonumber \\
     &= \frac{\text{vol}_g(M)}{8} \xi_m^2 \left( \sqrt{4 (\alpha_{2,0} - \alpha_{1,0} + \mathrm{o}(1))} b_m - \frac{\alpha_{1,0}(2_k^*-2)}{\sqrt{4 (\alpha_{2,0} - \alpha_{1,0} + \mathrm{o}(1))}} \xi_m \right)^2       -  \frac{\text{vol}_g(M)}{8} \frac{\alpha_{1,0}^2 (2_k^*-2)^2}{4 (\alpha_{2,0}-\alpha_{1,0})} \xi_m^4 \nonumber  \\
     &+ \frac{\alpha_{1,0} \text{vol}_g(M)}{32} (2_k^*-2)(2_k^*+1) \xi_m^4  + \mathrm{o}(\xi_m^4) \label{bm square} \\
     & \geq \frac{\text{vol}_g(M)}{32} (2_k^*-2)  \alpha_{1,0} \, \xi_m^4 \left(  (2_k^*+1) -  \frac{\alpha_{1,0}} {\alpha_{2,0} - \alpha_{1,0}} (2_k^*-2)  + \mathrm{o}(1) \right).  \nonumber
\end{align}
Here we completed a square in $b_m$ and simplified the occurring terms.
Since from \eqref{frank 2 expansion}, we have 
\[ \xi_m^4 = \mathcal E(\rho_m)^2( \mathcal E(\phi)^{-2} + \mathrm{o}(1)) = \mathcal E(\rho_m)^2 \left( \left( \frac{n-2k}{2} \right)^2 \frac{4}{\mathrm{vol}_g(M)^2} \alpha_{1,0}^{-2} + o(1) \right), \] 
the proof of the lemma is complete if we can show that $(2_k^*+1) -  \frac{\alpha_{1,0}} {\alpha_{2,0} - \alpha_{1,0}} (2_k^*-2)$ is strictly positive, or equivalently, 
\begin{equation}
\label{alpha 2 inequality}
\alpha_{2,0} > \frac{2\cdot 2_k^*-1}{2_k^*+1} \alpha_{1,0}. 
\end{equation} 
Now recall that $\alpha_{k,0} = \mathsf P_k(k \tau_0^{-1})$ for all $k \in \mathbb N_0$ and that $\alpha_{1,0} = (2_k^*-1) \alpha_{0,0}$ by the choice of $\tau_0$. Using that $\mathsf P_k(X)$ is a convex function of $X$ (being a sum of even-degree monomials with positive coefficients), we find 
\[ \alpha_{2,0} =  \mathsf P_k(2 \tau_0^{-1}) \geq 2 \mathsf P_k (\tau_0^{-1}) - \mathsf P_k(0) = 2 \alpha_{1,0} - \alpha_{0,0} = \left(2 - \frac{1}{2_k^*-1}\right) \alpha_{1,0}. \]
A direct computation shows $2 - \frac{1}{\nu-1} > \frac{2\nu -1}{\nu+1}$ for any $\nu>2$. In particular taking $\nu=2_k^*> 2$, we conclude that \eqref{alpha 2 inequality} is proved.
\end{proof}

    \begin{proof}
        [Proof of Theorem \ref{AS4_examples}]
Let us prove that there is $c > 0$ such that 
        \begin{equation}
            \label{E(u) quotient proof gamma=2}
            \frac{(\mathcal E(u) - \mathcal Y \|u\|_{2_k^*}^2) \mathcal E(u)}{\mathcal E(u - \overline{u})^2}\geq  c  \quad \text{ for all non-constant} \,  W^{k,2}(M).  
        \end{equation}
        We argue by contradiction and assume that there is a sequence $\{u_m\}_{m\in\mathbb N} \subset W^{k,2}(M)$ such that 
        \begin{equation}
            \label{contradiction ass proof thm gamma2}
            \frac{(\mathcal E(u_m) - \mathcal Y \|u_m\|_{2_k^*}^2) \mathcal E(u_m)}{\mathcal E(u_m - \overline{u}_m)^2}  \to 0 \quad {\rm as } m \to \infty.
        \end{equation}
        By zero homogeneity of the quotient in \eqref{contradiction ass proof thm gamma2}, we may assume without loss that $\|u_m\|_{2_k^*} = \text{vol}_g(M)^{1/2_k^*}$. Since $\mathcal E(u_m - \overline{u}_m) \leq \mathcal E(u_m)$, from \eqref{contradiction ass proof thm gamma2} it follows that 
        \[ 0 = \lim_{m \to \infty} \frac{(\mathcal E(u_m) - \mathcal Y \|u_m\|_{2_k^*}^2) \mathcal E(u_m)}{\mathcal E(u_m)^2} = 1 - \lim_{m \to \infty} \frac{\mathcal Y \|u_m\|_{2_k^*}^2}{\mathcal E(u_m)}.  \]
        Thus, $\{u_m\}_{m\in\mathbb N}\subset W^{k,2}(M)$ satisfies the assumptions of Lemma \ref{lemma frank 1}. As a consequence, we get that $\lambda_m^{-1} u_m  = 1 + \rho_m$ satisfies the assumptions of Lemmas \ref{lemma frank 2} and \ref{lemma frank 3} and we conclude
        \[ 0 < c \leq  \frac{(\mathcal E(1 + \rho_m) - \mathcal Y \|1 + \rho_m\|_{2_k^*}^2)}{\mathcal E(\rho_m)^2} = \lambda_m^2 \frac{(\mathcal E(u_m) - \mathcal Y \|u_m\|_{2_k^*}^2)}{\mathcal E(u_m - \overline{u}_m)^2} \lesssim \frac{(\mathcal E(u_m) - \mathcal Y \|u_m\|_{2_k^*}^2) \mathcal E(u_m)}{\mathcal E(u_m - \overline{u}_m)^2} . \]
        This is a contradiction to \eqref{contradiction ass proof thm gamma2} and so the proof of \eqref{E(u) quotient proof gamma=2} is complete. 

        From \eqref{E(u) quotient proof gamma=2}, it is straightforward to deduce the stability estimate \eqref{stabilityestimate}. Indeed, by \eqref{E(u) quotient proof gamma=2} we can estimate  
        \[ \mathcal Q_{g,k}(u) - \mathcal Y =  \frac{(\mathcal E(u) - \mathcal Y \|u\|_{2_k^*}^2) \mathcal E(u)}{\mathcal E(u - \overline{u})^2}  \frac{\mathcal E(u - \overline{u})^2}{\mathcal E(u) \|u\|_{2_k^*}^2} \geq c \frac{\mathcal E(u - \overline{u})^2}{\mathcal E(u) \|u\|_{2_k^*}^2}.  \]
        Using that $\|u\|_{2_k^*}^2 \leq \mathcal Y^{-1} \mathcal E(u)$, and that $\mathcal E(u) \lesssim \|u\|_{k,2}^2$ and $\mathcal E(u - \overline{u}) \gtrsim \|u - \overline{u}\|_{k,2}^2$ by norm equivalence, we get 
        \[ \mathcal Q_{g,k}(u) - \mathcal Y \geq \tilde{c} \frac{\|u - \overline{u}\|_{k,2}^4}{\|u\|_{k,2}^4} \geq  \tilde{c} \frac{ \inf_{c > 0} \|u - c\|_{k,2}^4}{\|u\|_{k,2}^4} = d(u, \mathcal M_{g,k})^4.   \]
        We recall that by Lemma \ref{lemma M gamma2 minimizers gen m} the minimizing set $\mathcal M_{g,k}$ consists precisely of the constants and that $d(u, \mathcal M_{g,k})$ is defined in \eqref{dist}. 
        
        By considering specifically the sequence $\{u_m\}_{m\in\mathbb N}\subset W^{k,2}(M)$ given by
        \[ u_m(s) = 1 + m^{-1}\left[\cos \left(\frac{s}{\tau_0}\right) + b_m \cos \left(\frac{2s}{\tau_0}\right)\right], \]
        with $b_m$ chosen so that the square in \eqref{bm square} vanishes, the inequalities in the above computations become (asymptotic) equalities, and we find
        \[ \mathcal Q_{g,k}(u_m) - \mathcal Y \sim d(u_m, \mathcal M_{g,k})^4.  \]
        Since $d(u_m, \mathcal M_{g,k}) \to 0$ as $m \to \infty$, we therefore cannot have $\mathcal Q_{g,k}(u_m) - \mathcal Y \gtrsim d(u_m, \mathcal M_{g,k})^{2 + \gamma}$ for any $\gamma < 2$. This proves the sharpness of $\gamma = 2$ in the statement of Theorem \ref{AS4_examples}. 
    \end{proof}

   \appendix

   \section{Recursive formulas for GJMS operators}\label{GJMSformulas}

   Formulas for the GJMS operator are only known in a few cases, for instance when the background manifold is Einstein \cite{MR2244375}, or more generally a special Einstein product \cite{MR2574315,Case2023}.
    Nevertheless, in \cite{MR3073887} one can find a recursion formula for these operators.
    We follow the recent construction \cite[Proposition~2.1]{mazumdar2022existence} and define this operator using the following recursion formula.
    
    Let ${\rm A}_g$ be the Schouten tensor defined as
    \begin{equation*}
        \mathrm{A}_g:=\frac{1}{n-2}\left(\operatorname{Ric}-\frac{R_g}{2(n-1)} g\right)
    \end{equation*}
    and ${\rm B}_g$ be the Bach tensor whose coordinates are given by
    \begin{equation*}
        \mathrm{B}_{ j}:=\mathrm{A}_{m\ell} \mathrm{W}_i^{m\ell}+\mathrm{P}_{ij ; m}^m-\mathrm{P}_{im;j}^m,
    \end{equation*}
    where $\mathrm{W}_{imj\ell}$, $\mathrm{A}_{m\ell}$ and $\mathrm{A}_{ij;m\ell}$ are the coordinates of $\mathrm{W}_g, \mathrm{A}_g$ and $\nabla_g^2 \mathrm{A}_g$, respectively. 
    We let $(\cdot, \cdot)$ be the multiple inner product induced by the metric $g$ for the tensors of the type $\mathfrak{T}^{r,s}(M)$.

    For every $k \in \mathbb{N}$ such that $n >2k$, we set
    \begin{align*}
        P_{g,k}= & (-\Delta_g)^k+k (-\Delta_g)^{k-1}\left(J_{g,1} \cdot\right)+k(k-1) (-\Delta_g)^{k-2}\left(J_{g,2} \cdot+\left(\mathrm{T}_{g,1}, \nabla\right)+\left(\mathrm{T}_{g,2}, \nabla_g^2\right)\right) \\
        & +k(k-1)(k-2) (-\Delta_g)^{k-3}\left(\left(\mathrm{T}_{g,3}, \nabla^2\right)+\left(\mathrm{T}_{g,4}, \nabla^3_g\right)\right) \\
        & +k(k-1)(k-2)(k-3) (-\Delta_g){k-4}\left(\mathrm{T}_{g,5}, \nabla^4_g\right)+Z,
    \end{align*}
    where $Z$ is a smooth linear operator of order less than $2k-4$ if $k \geq 3$ and $Z:=0$ if $k \leq 2$, the functions $J_{g,1},J_{g,2}\in \mathcal{C}^\infty(M)$ are defined as
    \begin{equation*}
        J_{g,1}:=\frac{n-2}{4(n-1)}R_g
    \end{equation*}
    and
    \begin{equation*}
        J_{g,2}:=\frac{1}{6}\left(\frac{3 n^2-12 n-4 k+8}{16(n-1)^2} R_g^2-(k+1)(n-4)|A_g|^2-\frac{3 n+2 k-4}{4(n-1)} (-\Delta_g) R_g\right)
    \end{equation*}
    with the tensors $\mathrm{T}_{g,1}, \mathrm{T}_{g,2}, \mathrm{T}_{g,3}, \mathrm{T}_{g,4}$ and $\mathrm{T}_{g,5}$ being defined as
    \begin{align*}
        \mathrm{T}_{g,1}:= & \frac{n-2}{4(n-1)} \nabla R_g-\frac{2}{3}(k+1) \delta_g \mathrm{A}_{g}, \\
        \mathrm{T}_{g,2}:= & \frac{2}{3}(k+1) \mathrm{A}_{g}, \\
        \mathrm{T}_{g,3}:= & \frac{n-2}{6(n-1)} \nabla^2_g R_g+\frac{(k+1)(n-2)}{6(n-1)} R_g\mathrm{A}_{g}-\frac{k+1}{3}(\delta \nabla_g \mathrm{A}_{g}+2 \nabla_g \delta_g \mathrm{A}_{g}+2 \mathrm{Rm}_g \cdot \mathrm{A}_{g}) \\
        & -\frac{2}{15}(k+1)(k+2)\left(3 \mathrm{A}_{g}^{\#} \mathrm{A}_{g}+\frac{\mathrm{B}_{g}}{n-4}\right), \\
        \mathrm{T}_{g,4}:= & \frac{2}{3}(k+1) \nabla_g \mathrm{A}_{g},
    \end{align*}
    and
    \begin{equation*}
        \mathrm{T}_{g,5}:=\frac{2}{5}(k+1)\left(\frac{5 k+7}{9} \mathrm{A}_{g} \otimes \mathrm{A}_{g}+\nabla^2_g \mathrm{A}_{g}\right).
    \end{equation*}
    Here \# stands for the musical isomorphism with respect to $g$ and $\delta_g\nabla_g \mathrm{A}_{g}$, $\nabla_g \delta_g \mathrm{A}_{g}$ and $\mathrm{Rm}_g \cdot \mathrm{A}_{g}$ stand for the covariant tensors whose coordinates are given by
    \begin{equation*}
        (\delta_g \nabla_g \mathrm{A})_{i j}:=-\mathrm{A}_{i j ; m}^m, \quad (\nabla_g \delta_g \mathrm{A}_g)_{i j}:=-\mathrm{A}_{i;mj}^m \quad \text { and } \quad (\mathrm{Rm}_g \cdot \mathrm{A}_g)_{ij}:=\mathrm{A}_{im\ell}^m \mathrm{A}_j^\ell+\mathrm{Rm}_{i \ell j m} \mathrm{A}^{m\ell},
    \end{equation*}
    where $\mathrm{Rm}_{i \ell j m}$ are the coordinates of the Riemann curvature tensor.

%%%%%%%%%%%%%%%%%%%%%%%%%%%%%%%%%%%%%%%%%%%%%%%%%%%%%%%%%%%%%%%%%%%%%%%%%%%%%%%%%%%%%%%%%%%%%%%%%%
	% REFERENCES
	%%%%%%%%%%%%%%%%%%%%%%%%%%%%%%%%%%%%%%%%%%%%%%%%%%%%%%%%%%%%%%%%%%%%%%%%%%%%%%%%%%%%%%%%%%%%%%%%%%
    \bibliography{references}
    \bibliographystyle{abbrv}

    \end{document}